\documentclass[11pt, twoside,reqno]{amsart}

\usepackage[top=1in,bottom=1in,left=1.2in,right=1.2in]{geometry}
\usepackage{amsmath,amssymb,amsthm,amsfonts,enumerate, mathrsfs, mathtools, bm}

\usepackage{appendix}
\usepackage{relsize}
\usepackage{stmaryrd}
\usepackage{esint}
\usepackage{xcolor}
\usepackage{graphicx}
\usepackage[numbers]{natbib}
\usepackage{tikz}

\usetikzlibrary{patterns,positioning,arrows,shapes,trees}

\usepackage[
colorlinks=true,
linkcolor=blue,
filecolor=blue,
urlcolor=red,
citecolor=magenta]{hyperref}
\usepackage[nameinlink]{cleveref}

\newcounter{counterConstant} 
\newcommand{\const}[1]{
	\addtocounter{counterConstant}{1}
	\edef#1{\arabic{counterConstant}}
}

\numberwithin{equation}{section}
\def\theequation{\arabic{section}.\arabic{equation}}

\newtheorem{theorem}{Theorem}[section]
\newtheorem{lemma}[theorem]{Lemma}
\newtheorem{remark}[theorem]{Remark}
\newtheorem{definition}[theorem]{Definition}
\newtheorem{proposition}[theorem]{Proposition}
\newtheorem{corollary}[theorem]{Corollary}
\newtheorem{assumption}{Assumption}

\def\cD{{\mathcal D}}
\def\cE{{\mathcal E}}
\def\cF{{\mathcal F}}

\def\cH{{\mathcal H}}

\def\cJ{{\mathcal J}}

\def\cM{{\mathcal M}}

\def\mE{{\mathbb E}}

\def\mP{{\mathbb P}}

\def\mR{{\mathbb R}}
\def\mS{{\mathbb S}}

\def\mW{{\mathbb W}}

\def\bE{{\mathbf E}}

\def\geq{\geqslant}
\def\leq{\leqslant}

\def\esssup{\mathop{\mathrm{ess\,sup}}}

\def\1{{\mathbf{1}}}

\def\d{\text{\rm{d}}}
\def\e{\mathrm{e}}

\def\div{\mathord{{\rm div}\,}}

 \usepackage{fancyhdr}
{} 
\begin{document}
	
\title{Diffusions with critical divergence-free drifts: Varadhan asymptotics and path-law singularity}

\author{Jintao Feng and Guohuan Zhao}

\address{State Key Laboratory of Mathematical Sciences, Academy of Mathematics and Systems Science, CAS, Beijing, 100190, China}
\email{fengjintao@amss.ac.cn}

\address{State Key Laboratory of Mathematical Sciences, Academy of Mathematics and Systems Science, CAS, Beijing, 100190, China}
\email{gzhao@amss.ac.cn}

%\thanks{The research of Guohuan Zhao is supported by the National Key Research and Development Program of China (No. 2024YFA1013503) and the National Natural Science Foundation of China (Grant No. 12271352).}

\begin{abstract}
We study the Euclidean Varadhan formula (EVF) and path laws for diffusions with critical divergence-free drifts in dimension $d\geq2$. For drifts given by smooth bounded skew-symmetric stream matrices, we prove a quantitative lower bound under a sector condition with a lower-order term. Its constants depend only on the dimension and the sector condition data. By approximation, the locally uniform Euclidean lower bound holds for all divergence-free $L^\infty_t{\rm BMO}^{-1}_x$ drifts. However, a time-dependent counterexample shows that EVF can fail even for a drift in $L^\infty_tL^d_x$. A complementary upper bound is given under a structural decomposition condition. In particular, for every time-independent $L^{d,\infty}$ drift (without any smallness assumption), and the decoupled diffusion associated with the two-dimensional vorticity equation, the EVF holds. Finally, we construct a time-independent $L^d$ drift whose SDE has a unique strong solution from the origin and whose path law is mutually singular with the corresponding driftless Brownian law. The same diffusion satisfies EVF, showing that Brownian short-time endpoint costs can persist despite singularity of the path law.
\end{abstract}
	
\maketitle

\setcounter{tocdepth}{1}
\tableofcontents
	
\noindent \textbf{Keywords}: singular stochastic differential equations;
divergence-free drifts; Wiener measure; heat kernels; Varadhan asymptotics.
	
\noindent \textbf{AMS 2020 Mathematics Subject Classification:}
Primary 60J60, 60H10; Secondary 35K08, 60J55.

\section{Introduction and main results}

\subsection{Motivations}

Let $d\geq2$.  Consider the time-inhomogeneous diffusion formally given by
\begin{equation}\label{eq:sde}
\d X_t=u(t,X_t)\,\d t+\sqrt2\,\d W_t,
\quad X_0=x,
\end{equation}
where the drift is divergence-free and may be distribution-valued.  Its formal generator is
\[
L_t:=\Delta+u(t,\cdot)\cdot\nabla.
\]

Equation \eqref{eq:sde} describes the motion of a stochastic Lagrangian particle transported by an incompressible velocity field.  Its transition operators describe the corresponding advection--diffusion equation and arise in passive-scalar transport, convection-enhanced diffusion, and stochastic representations of incompressible fluid equations; see, for example, \cite{osada1985vortex,fannjiang1994convection, constantin2008stochastic,GP24weak,HZ25SDEs,RZZ26mckean} and the references therein.  Since the velocity fields in these models are often irregular, we ask which Brownian features survive at scale-critical regularity.  We study both the short-time cost of reaching a given endpoint and absolute continuity of the whole path law.  Our results show that these two properties can behave differently for the same critical drift.

The critical spaces can be distinguished by the diffusive scaling.  If $X_t^{(\lambda)}:=\lambda^{-1}X_{\lambda^2t}$, then the rescaled drift is
\[
u_\lambda(t,x):=\lambda u(\lambda^2t,\lambda x),
\quad
\|u_\lambda\|_{L^\infty_tL^p_x}
=\lambda^{1-d/p}\|u\|_{L^\infty_tL^p_x}.
\]
Thus, $p>d$ is subcritical, whereas $p=d$ is invariant.  Likewise, the Lorentz space $L^{d,\infty}$ is invariant under this scaling and contains $L^d$.  In the divergence-free setting, it is also natural to write
\begin{equation}\label{eq:def_u}
    u_j=\sum_{i=1}^d\partial_iU_{ij}, \quad U^{\mathsf T}=-U.
\end{equation}
We call $U$ a skew-symmetric stream matrix.  The larger distributional class $u\in L^\infty_t{\rm BMO}^{-1}_x$ is again invariant under the same scaling.   We refer the reader to Appendix \ref{sec:aux} for the definitions of ${\rm BMO}$ and ${\rm BMO}^{-1}$ and the associated critical embeddings.  In particular, Lemma \ref{lem:hodge} shows that every divergence-free $u\in L^{d,\infty}$ admits a ${\rm BMO}$ stream matrix.

The first Brownian feature we consider is the short-time cost of the transition density.  When $u=0$, the transition density of $\sqrt2W$ is
\[
q_t(x,y):=(4\pi t)^{-d/2}
\exp\Big(-\frac{|x-y|^2}{4t}\Big).
\]
Consequently, the Euclidean Varadhan formula takes the form
\begin{equation}\label{eq:var}
\lim_{t\downarrow0}t\log p_{0,t}(x,y)
=-\frac{|x-y|^2}{4}.\tag{EVF}
\end{equation}

Pointwise Varadhan asymptotics are classical under stronger regularity hypotheses.  For uniformly elliptic operators with regular coefficients, the formula goes back to Varadhan \cite{varadhan1967behavior}.  In the symmetric divergence-form setting, Norris \cite{norris1997heat} proved the pointwise heat kernel asymptotic for the Dirichlet form $\cE(f,f):=\int_M|\nabla f|^2\,\d m$ on Lipschitz Riemannian manifolds. For singular Euclidean drifts, Chen--Fang--Zhang \cite{CFZ19small} proved a small-time large-deviation principle and a pointwise Varadhan formula in dimension $d\geq3$ for the time-homogeneous generator $\frac12\Delta+\mu\cdot\nabla$, where the components of the measure $\mu$ belong to a Kato class.

An another line of work concerns set-to-set asymptotics. Hino--Ram\'{i}rez \cite{hinoRamirez2003}, and subsequently Ariyoshi--Hino \cite{ariyoshiHino2005}, obtained intrinsic-distance formulas for broad classes of symmetric strongly local Dirichlet forms. Hino--Matsuura \cite{hinoMatsuura2018} extended these formulas to fixed nonsymmetric forms under coercivity, sector, and form-smallness assumptions.  In their Euclidean example for $d\geq3$, the first-order coefficients belong to $L^d+L^\infty$, but not general $L^{d,\infty}$ drifts.  Their conclusion is in integral form rather than pointwise, and their first-order perturbations are measurable vector fields rather than general distribution-valued ${\rm BMO}^{-1}$ drifts.

At the critical distributional regularity considered here, Qian--Xi \cite{QX19parabolic2} constructed a fundamental solution (transition density) $p_{s,t}(x,y)$ satisfying the two-sided Aronson bounds
\begin{equation}\label{eq:aronson}
 \frac{(t-s)^{-d/2}}{C}\exp\Big(-\frac{C|x-y|^2}{t-s}\Big)
\leq p_{s,t}(x,y)
\leq C (t-s)^{-d/2}
\exp\Big(-\frac{|x-y|^2}{C(t-s)}\Big). \tag{AE}
\end{equation}
Here $C$ depends only on $d$ and $\|U\|_{L^\infty_t{\rm BMO}_x}$.
These bounds show that the Gaussian scale survives, but their unspecified exponential constants do not determine the limit in \eqref{eq:var}. And the aforementioned pointwise results for Varadhan formula do not cover general time-dependent ${\rm BMO}^{-1}$ drifts, while the broader nonsymmetric form result is in integral form and time-independent.  This leaves our first question:
\begin{enumerate}
    \item [({\bf Q}$_1$):] Does \eqref{eq:var} hold for the transition density associated with \eqref{eq:sde} when the divergence-free drift belongs to the critical space $L^\infty_t {\rm BMO}^{-1}_x$?
\end{enumerate}

One probabilistic approach to short-time density asymptotics is to use a change of measure on path space. For instance, Azencott \cite{azencott1984density} combined the Laplace method with the Cameron--Martin--Girsanov formula to obtain small-time density expansions.

In the subcritical regime $u\in L^\infty_tL^p_x$ with $p>d$, we have
\begin{equation}\label{eq:sub}
\bE_x\int_0^t|u(s,\sqrt{2}W_s)|^2\,\d s
\leq 
C t^{1-d/p}\|u\|_{L^\infty_tL^p_x}^2.
\end{equation}
Here $C$ depends only on $d,p$. Khasminskii's lemma and Girsanov's theorem show that, on every finite time interval, the solution law is absolutely continuous with respect to the law of $\sqrt2W$.  Thus, in this subcritical regime, the diffusion law can be compared directly with the Brownian law.  At the critical exponent, the right-hand side of \eqref{eq:sub} no longer yields such a comparison.

We point out that this difficulty is separate from well-posedness: criticality of the drift does not necessarily destroy well-posedness of \eqref{eq:sde}.  For time-homogeneous $L^d$ drifts, Krylov \cite{krylov2021stochastic2} first proved the existence and uniqueness of a strong solution to \eqref{eq:sde}. Weak well-posedness of \eqref{eq:sde} for more general time-dependent drifts, including $L^\infty_tL^{d}_x$ drifts, was studied by R\"ockner--Zhao \cite{RZ13weak,RZ25sde} and Kinzebulatov--Vafadar \cite{KV25critical}. However, these results do not determine whether the solution law is absolutely continuous with respect to the law of the corresponding Brownian motion.  It is therefore natural to ask:
\begin{enumerate}
    \item [({\bf Q}$_2$):] Is the law of the diffusion path absolutely continuous with respect to the law of the corresponding Brownian motion, at least, when the drift belongs to the critical space $L^d_x$?
\end{enumerate}

\subsection{Main results}

In this work, we consider a locally integrable skew-symmetric stream matrix $U(t)$, and the vector field $u$ is given by \eqref{eq:def_u}. Thus, $u$ is divergence-free in distributional sense. Spatial integrals without a specified domain are over $\mR^d$. Define the forms corresponding to the operator $L_t$, initially for real $f,g\in C_c^\infty(\mR^d)$, by
\begin{equation}\label{eq:local-forms}
\begin{aligned}
\cE_t(f,g):=&\cE^s(f,g)+\cE_t^a(f,g) \mbox{ with }\\
\cE^s(f,g):=\int\nabla f\cdot & \nabla g, \quad \cE_t^a(f,g):=\int U(t)\nabla f\cdot\nabla g.\\
\end{aligned}
\end{equation}
We use the same notation for their continuous extensions to $H^1(\mR^d)$. For ${\rm BMO}$ stream matrices, these extensions are defined via Hardy--BMO duality (see \eqref{eq:bmo}--\eqref{eq:hbmo} below). For another skew-symmetric matrix $A$, we denote the corresponding antisymmetric form by $\cE_A^a$.

For $U\in C_b^\infty(\mR_+\times\mR^d;\mR^{d\times d})$, let $p_{s,t}(x,y)$ be the heat kernel of $L_t=\Delta+u(t)\cdot\nabla$, where $u=\div U$. We first state the lower and upper estimates for smooth bounded stream matrices. Both Theorems \ref{thm:lower} and \ref{thm:upper} are proved independently of the Aronson bounds \eqref{eq:aronson}. Their constants are controlled solely by the dimension and the data in the sector condition and the structural decomposition, respectively; no Aronson constant enters either estimate. 

\begin{assumption}\label{ass:sector}
Let $T>0$. We say that $\cE^a_t$ satisfies the \emph{sector condition} with a time-dependent lower-order term if there are $M\geq0$ and a nonnegative $m\in L^2(0,T)$ such that, for almost every $t\in(0,T)$ and all real $f,g\in C_c^\infty(\mR^d)$,
\begin{equation}\label{eq:sector}
|\cE_t^a(f,g)|\leq\|\nabla f\|_2
\bigl(M\|\nabla g\|_2+m(t)\|g\|_2\bigr).
\tag{SC}
\end{equation}
\end{assumption}
By skew symmetry, the same inequality holds with $f$ and $g$ interchanged. The case $m=0$ includes ${\rm BMO}$ stream matrices. More generally, a drift $u=a+b$ with divergence-free $a\in L^\infty_t{\rm BMO}^{-1}_x$ and $b\in L^2_tL^\infty_x$ on $(0,T)$ satisfies \eqref{eq:sector} (see Lemma \ref{lem:sector-sum}).

\begin{theorem}[Lower bound]\label{thm:lower}
Let $d\geq2$ and let $U\in C_b^\infty(\mR_+\times\mR^d;\mR^{d\times d})$ be skew-symmetric. Suppose that Assumption \ref{ass:sector} holds, and set $N:=\|m\|_{L^2(0,T)}$. There is a constant $C$ such that, for all $x,y\in\mR^d$ and $0<t<T\wedge \frac{1}{4}$,
\begin{equation}\label{eq:lower}
p_{0,t}(x,y)\geq t^{-d/2}
\exp\left\{-\frac{|x-y|^2}{4t(1-\sqrt t)^2}
-\frac{C}{\sqrt t}\right\}.
\end{equation}
Here $C$ depends only on $d,M,N$, and not on any other norm of $U$ or its derivatives.
\end{theorem}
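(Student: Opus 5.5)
We would prove \eqref{eq:lower} by a Davies-type weighted energy argument combined with a chaining/Harnack-free ``entropy'' lower bound. The constants must depend only on $d,M,N$, so we avoid the Aronson bounds and all smoothness norms of $U$. Smoothness is used only qualitatively, to justify integrations by parts and existence of the kernel.

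The central tool is a lower bound for the log-density via a log-Sobolev/entropy functional along a path. Fix $x,y$, $t$, and a small radius $r=t$-dependent (we take $r\sim\sqrt t$ scaled by the factor $(1-\sqrt t)$). Let $\phi_s$ be a smooth probability density concentrated near the straight line $\gamma(s)=x+\frac{s}{t}(y-x)$, namely a normalized Gaussian bump of width $\sim\sqrt{t}$ moving along $\gamma$. Consider
\[
F(s):=\int \phi_s(z)\log p_{0,s}(x',z)\,\d z .
\]
Differentiate $F$ in $s$ using $\partial_s p=\Delta p-\div(up)$ (the adjoint equation, using $\div u=0$), integrate by parts, and write $\log p$ as $w$. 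The terms are then the following.
\begin{enumerate}[(i)]
\item $\int\phi|\nabla w|^2$ and $-\int\nabla\phi\cdot\nabla w$ from the Laplacian.
\item $\int \partial_s\phi\, w$, which transports the bump.
\item The drift term $\int \phi\, u\cdot\nabla w$ (up to sign). Writing $u=\div U$ and integrating by parts, this equals $\pm\cE^a_s(\phi,w)$, because the term $\int \phi\, U\nabla w\cdot\nabla w$ vanishes by skew symmetry.
\end{enumerate}
Assumption~\ref{ass:sector} then gives
\[
|\cE^a_s(\phi,w)|\le\|\nabla\phi\|_2\bigl(M\|\nabla w\|_2+m(s)\|w\|_2\bigr).
\]
We do not apply it this way: $w$ is not in $L^2$, so we apply the sector condition with weighted pairs. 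We realize the drift term as $\cE^a_s(\sqrt\phi,\sqrt\phi\,w')$-type expressions with cutoffs, and then absorb $M\|\nabla w\|_{L^2(\phi)}$ into the good term $\int\phi|\nabla w|^2$ by Young's inequality. This costs $M^2\int|\nabla\sqrt\phi|^2\sim M^2/t$ per unit time, which integrates to $O(M^2)$, a harmless constant. The $m(s)$ piece gives $\int_0^t m(s)\,\d s\cdot(\dots)\le N\sqrt t\,(\dots)$ by Cauchy--Schwarz, again harmless. Term (ii) with the moving bump is controlled against $\int\phi|\nabla w|^2$ via $\partial_s\phi=-\dot\gamma\cdot\nabla\phi$. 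This yields the kinetic cost $\int\phi|\dot\gamma|^2/4=|x-y|^2/(4t^2)$ per unit time. The factor $(1-\sqrt t)^{-2}$ arises because we spend time $\sqrt t\cdot t$ at the start and end in a stationary phase to regularize.

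Concretely, we run the path on $[t\sqrt t,\,t-t\sqrt t]$ with speed $|x-y|/(t(1-2\sqrt t))$. On the two end intervals we use the on-diagonal lower bound. That bound, $p_{0,\tau}(x,z)\ge c\tau^{-d/2}$ for $|x-z|\le\sqrt\tau$, should itself follow from the same entropy computation with a stationary bump, started from a Nash-type argument. Mass conservation, $\int p=1$ from $\div u=0$, and the $L^1\to L^\infty$ bound $p\le C\tau^{-d/2}$ (Nash, independent of $U$ since the drift is divergence-free) give $\int_{B}\phi\log p\ge -C$. We then use Jensen and the Chapman--Kolmogorov identity
\[
p_{0,t}(x,y)\ge\int\!\!\int p_{0,\tau}(x,z)\,p_{\tau,t-\tau}(z,z')\,p_{t-\tau,t}(z',y)\,\d z\,\d z'
\]
to pass from averaged $\log p$ against bumps to a pointwise bound. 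For this we treat the backward kernel in $y$ symmetrically using the adjoint operator, which has drift $-u$ and satisfies the same sector condition.

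The main obstacle is the drift term. Since only the sector bound is available, one cannot use pointwise size of $u$, and the form must be applied to non-$L^2$, non-compactly supported functions like $\phi\log p$. We would first try working with $w_\epsilon=\log(p+\epsilon)$ truncated and multiplied by a cutoff, with a Gaussian tail control supplied by Nash/Davies upper bounds. We will need to verify that the ensuing error terms scale like $t^{-1/2}$ in the exponent, matching the $C/\sqrt t$ in \eqref{eq:lower}.
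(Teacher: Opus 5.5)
Your architecture matches the paper's: a Norris-type moving Gaussian average of $\log p$, the sector bound applied to a pair such as $(\sqrt\phi,\sqrt\phi\,(w-\bar w))$, and Chapman--Kolmogorov with Jensen at the end. The mean subtraction and a (weighted) Gaussian Poincar\'e inequality are what make that pairing work. However, the anchoring at the endpoints is not established.

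\textbf{Missing localization estimate.} Mass conservation and the Nash bound $p\le C\tau^{-d/2}$ do not give $\int\phi\log p\ge -C$. Nothing in them stops the drift from carrying most of the mass of $p_{0,\tau}(x,\cdot)$ away from where the bump sits. The Nash-type ODE for the Gaussian average of $\log p$ closes only once a fixed fraction of the mass stays within distance $C\sqrt\tau$ of $x$. That yields a set of Gaussian measure $\ge\mu$ on which $p$ is bounded below, which feeds the Poincar\'e term. So you need a displacement bound
\[
\int|z-x|\,p_{s,s+h}(x,z)\,\d z\le C(d,M,N)\sqrt h,
\]
for the kernel and for its adjoint. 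Under \eqref{eq:sector} alone this takes real work, because you must control $\int U\nabla\chi\cdot\nabla k$ for a $1$-Lipschitz $\chi$ with no size bound on $U$. The paper localizes the sector bound by a partition of unity (Lemma~\ref{lem:cutoff}) and applies it with $\varphi=\sqrt k$, giving $M'\le C\sqrt I+Cm$. It then closes with the entropy dissipation $H'=-4I$ through the functional $2M+\sqrt t\,(H+\tfrac d2\log t+\log Z_d)$ (Lemma~\ref{lem:mass}). Only then does the averaged anchor of Lemma~\ref{lem:endpoints} follow. An averaged bound is all that is needed; your pointwise on-diagonal lower bound is stronger than necessary and harder.

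\textbf{Scale mismatch.} Your end intervals have length $t^{3/2}$, so any anchor there lives at spatial scale $t^{3/4}$. Your moving bump, however, has width $\sqrt t$. Starting the moving computation would require $\int\log p_{0,t^{3/2}}(x,z)\phi(z)\,\d z$ over a bump of width $\sqrt t\gg t^{3/4}$. That is off-diagonal Gaussian lower information, which is what you are trying to prove.

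Shrinking the bump to variance $t^{3/2}$ throughout fails too. The drift cost is $B\sim Mr^{-1/2}=Mt^{-3/4}$, so the cross term $|\dot\gamma|B$ integrates to about $|x-y|Mt^{-3/4}$. After splitting with $\varepsilon=\sqrt t$ this leaves an error of order $M^2/t$, not $C/\sqrt t$.

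The paper's remedy is a Gaussian of variable variance $r(\tau)=\min\{8\tau,t,8(t-\tau)\}$. It matches the anchor scale at the ends and equals $t$ in the middle. The extra cost is only $\int(1+|r'|)^2/r\sim\log(1/\beta)$ and $\int(1+|r'|)/\sqrt r\lesssim\sqrt t$.

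\textbf{Accounting for $(1-\sqrt t)^{-2}$.} You cannot absorb the whole drift term into $\int\phi|\nabla w|^2$, because that term must also pay the kinetic cost. The cross term has to be split with $\varepsilon=\sqrt t$, which produces one factor $(1-\sqrt t)^{-1}$ and the $C/\sqrt t$. Moreover, ends of length $t\sqrt t$ already give $(1-2\sqrt t)^{-1}>(1-\sqrt t)^{-2}$ in the kinetic term, so the exact form of \eqref{eq:lower} would not follow. The paper uses ends of length $t^{3/2}/4$, i.e.\ $\beta=\sqrt t/2$, so that $(1-\beta)(1-\varepsilon)\ge(1-\sqrt t)^2$.
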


Theorem \ref{thm:lower} has the following consequence for BMO drifts; its proof is given at the end of Section \ref{sec:lower}.

\begin{corollary}\label{cor:bmo-lower}
Let $d\geq2$ and let $u\in L^\infty_t{\rm BMO}^{-1}_x$ be divergence-free. The associated heat kernel $p$ constructed in Theorem \ref{thm:feller} satisfies, for every $x,y\in\mR^d$,
\[
\liminf_{t\downarrow0}t\log p_{0,t}(x,y)
\geq-\frac{|x-y|^2}{4}.
\]
The lower bound holds locally uniformly in $(x,y)$.
\end{corollary}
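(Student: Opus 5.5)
The plan is to deduce the corollary from Theorem \ref{thm:lower} by smooth approximation, using that the constant in \eqref{eq:lower} depends only on $d$, $M$, $N$.

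\emph{Step 1: choice of stream matrix and mollification.} Since $u\in L^\infty_t{\rm BMO}^{-1}_x$ is divergence-free, it has a skew-symmetric stream matrix $U\in L^\infty_t{\rm BMO}_x$ with $u=\div U$ (cf.\ Appendix \ref{sec:aux}). Mollify in space and time, $U_n:=U*\rho_n$, and truncate at large scales so that $U_n\in C_b^\infty(\mR_+\times\mR^d;\mR^{d\times d})$ remains skew-symmetric. The truncation must keep $\sup_n\|U_n\|_{L^\infty_t{\rm BMO}_x}\le C\|U\|_{L^\infty_t{\rm BMO}_x}$. One way is to compose each entry with a Lipschitz truncation $\phi_R$ of slope $1$ and restore skew-symmetry by taking $(\phi_R(U_{ij})-\phi_R(U_{ji}))/2$. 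Hardy--BMO duality \eqref{eq:hbmo} then yields \eqref{eq:sector} with $m=0$ and $M=C(d)\|U\|_{L^\infty_t{\rm BMO}_x}$, uniformly in $n$. This is because $\nabla f\cdot\nabla g$-type products $\partial_if\,\partial_jg-\partial_jf\,\partial_ig$ lie in $\mathcal H^1$ with norm $\lesssim\|\nabla f\|_2\|\nabla g\|_2$ (compensated compactness).

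\emph{Step 2: uniform lower bound.} Theorem \ref{thm:lower} applied to each $p^n$ gives, for $0<t<T\wedge\frac14$ and all $x,y$,
\[
p^n_{0,t}(x,y)\ge t^{-d/2}\exp\Big\{-\frac{|x-y|^2}{4t(1-\sqrt t)^2}-\frac{C}{\sqrt t}\Big\},
\]
with $C$ independent of $n$.

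\emph{Step 3: passage to the limit.} I will use the convergence $p^n_{0,t}(x,y)\to p_{0,t}(x,y)$ supplied by the construction in Theorem \ref{thm:feller}, which I expect to be the main point requiring care. If that theorem gives only weak convergence or convergence along subsequences, I would instead use the uniform Aronson bounds \eqref{eq:aronson} together with uniform Nash/De Giorgi H\"older continuity of $p^n$ in $(x,y)$ for fixed $t>0$, both controlled by $\|U\|_{L^\infty_t{\rm BMO}_x}$ only. Arzel\`a--Ascoli then gives locally uniform convergence along a subsequence, and uniqueness of the limit identifies it as $p$. Even weak convergence suffices, since pairing with nonnegative test functions $\varphi(x)\psi(y)$ preserves the pointwise lower bound, whose right side is continuous in $(x,y)$. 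In every case the displayed inequality holds for $p$, for all $x,y$ and $0<t<T\wedge\frac14$ (a.e.\ first, then everywhere by continuity of $p$ in $(x,y)$).

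\emph{Step 4: conclusion.} Taking logarithms and multiplying by $t$,
\[
t\log p_{0,t}(x,y)\ge -\tfrac d2 t\log t-\frac{|x-y|^2}{4(1-\sqrt t)^2}-C\sqrt t\to-\frac{|x-y|^2}{4}.
\]
The error terms are uniform for $|x-y|$ bounded, which gives the locally uniform statement.
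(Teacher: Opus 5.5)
Your proposal is correct and follows essentially the same route as the paper. The paper obtains a uniform sector constant with $m=0$ for the smooth approximations via \eqref{eq:hbmo} (Proposition \ref{prop:bounds}(i)), applies Theorem \ref{thm:lower} with one constant, passes to $p$ at fixed $t$ using the locally uniform convergence of Theorem \ref{thm:feller}(b), and then takes logarithms.

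The only difference is that you build your own mollify-and-truncate approximations rather than using the band-pass family \eqref{eq:bandpass}. Theorem \ref{thm:feller}(b) holds for every sequence satisfying \eqref{eq:approx}, so you may simply take \eqref{eq:bandpass}. That spares you checking that your truncation is genuinely $C_b^\infty$ and converges in $L^2_{{\rm loc}}$: a merely Lipschitz $\phi_R$ is not smooth, and truncating after the time mollification need not keep time derivatives bounded.
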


However, this lower bound need not be accompanied by the Euclidean upper bound, even for a time-dependent drift uniformly bounded in the critical spatial space $L^d$.

\begin{theorem}\label{thm:counter}
For every $d\geq2$, there is a divergence-free vector field $u\in L^\infty_tL^d_x\subseteq L^\infty_t{\rm BMO}^{-1}_x$ such that every spatial slice is smooth and compactly supported, and its heat kernel satisfies
\[
\limsup_{t\downarrow0}t\log p_{0,t}(0,e_1)
\geq-\frac19>-\frac14.
\]
In particular, the pointwise Euclidean Varadhan upper bound fails.
\end{theorem}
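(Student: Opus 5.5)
The counterexample will be a time-dependent drift that, at a sequence of times $t_n\downarrow0$, acts as a strong transport ("conveyor belt") pushing mass from near $0$ toward $e_1$ on a time window much shorter than $t_n$. The scaling invariance of $L^\infty_tL^d_x$ lets us use the same profile at every scale without increasing the norm. Fix a smooth compactly supported divergence-free field $v$ on $\mR^d$ whose integral curves include a channel along the segment from roughly $e_1/3$ to $2e_1/3$. A natural choice is the curl of a skew matrix $\Phi$ supported in a thin tube, so that $v=\lambda e_1$ in the core of the tube, for a large constant $\lambda$. On the time interval $I_n=[t_n/2,t_n]$ we set
\[
u(s,x)=\mu_n\, v\big(x\big)
\]
(or a suitably rescaled version of $v$). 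The times $t_n$ are chosen sparse, with $t_{n+1}\ll t_n$ and $u=0$ off $\bigcup I_n$. Every slice is then smooth, compactly supported and divergence-free.

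The difficulty is that the $L^d$ norm of $\mu_n v$ grows with $\mu_n$. Since the spatial extent is fixed at order $1$, there is no scaling freedom. Keeping $\sup_s\|u(s)\|_{L^d}$ bounded therefore forces $\mu_n$ to be bounded. So a single large channel cannot be used.

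Instead I would use a smooth divergence-free field concentrated in a tube of cross-sectional radius $r$ about the segment. Its velocity is of order $V$ on the core, so $\|u\|_{L^d}^d\approx V^d r^{d-1}$. Taking $V\approx r^{-(d-1)/d}$ keeps the norm bounded while $V\to\infty$ as $r\to0$.

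On the window $I_n$ of length $t_n/2$, the particle gains a lower-bound cost estimate as follows. It must first reach the tube entrance, at distance $a$ from $0$, within time $\epsilon t_n$; this costs about $a^2/(4\epsilon t_n)$. It must then stay inside the tube for the travel time $\ell/V$, where $\ell$ is the tube length. Finally it goes from the tube exit, at distance $b$ from $e_1$, in the remaining time, costing about $b^2/(4(1-\epsilon) t_n)$.

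The target $-1/9$ suggests taking $a=b=1/3$ with travel in the middle, optimised as $(a+b)^2/4=1/9$. The Brownian cost of the first and last legs is exactly $\frac{(a+b)^2}{4t}=\frac1{9t}$ when the times are split proportionally to the lengths. The probability of staying in a tube of radius $r$ for time $\tau=\ell/V$, while being transported, is at least $\exp(-C\tau/r^2)$ by a small-ball/Dirichlet eigenvalue estimate, after which the endpoint density is recovered from a Gaussian lower bound.

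We need $\tau/r^2=\ell r^{(d-1)/d-2}\ll 1/t_n$. Choosing $r=r_n$ to depend on $n$ arranges this, since the time window $t_n$ is given and we may take $r_n$ as small as needed relative to it. Because $\ell r^{(d-1)/d-2}$ blows up as $r\to0$, a balance is required: $r_n$ is chosen so that $r_n^{(d-1)/d-2}\le \delta_n/t_n$ with $\delta_n\to0$, but still small enough that $r_n^{-(d-1)/d}t_n\gg 1$, so the transit fits in the window.

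I would make this rigorous using the Markov property at the tube endpoints. This requires a Gaussian lower bound for the kernel with drift over short intervals away from the tube. For that, I would make $u$ vanish near the start and end legs, either spatially (the tube is supported away from the Brownian legs except at its ends) or temporally, by splitting $I_n$ into three subintervals with $u=0$ on the first and last thirds, adjusted proportionally. With the temporal splitting, the first and last legs are pure Brownian motion. The middle leg gives a probability lower bound of $\exp(-o(1/t_n))$ for going from a small ball near $e_1/3$ to a small ball near $2e_1/3$. Combining the three legs yields $p_{0,t_n}(0,e_1)\ge \exp(-(1/9+o(1))/t_n)$, because the drift before time $t_n/2$ (from earlier windows at much smaller scales) only perturbs things negligibly via the Aronson bounds and the Markov property.

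The main obstacle will be the middle leg: obtaining a tube-confinement probability that is uniform in $r_n$ while the drift is strong and $L^d$-bounded. The danger is that in the thin tube, transverse fluctuations exit before transport completes. I would handle this by designing $v$ as a shear flow $V\varphi(x_\perp/r)e_1$, which is divergence-free automatically. The $e_1$-motion then follows the drift whenever $|x_\perp|<r/2$, and the confinement probability for $x_\perp$ is $\exp(-\lambda_1\tau/r^2)$ exactly, independent of the $e_1$-motion.
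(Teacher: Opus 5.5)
Your proposal has a genuine gap. It sits at exactly the step you flagged as the main obstacle, and it cannot be closed by tuning parameters, because the two conditions in your ``balance'' paragraph are incompatible.

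With $V\approx r^{-(d-1)/d}$ the transit time is $\tau=\ell/V\approx\ell r^{(d-1)/d}$. The two requirements then read:
\begin{enumerate}[(i)]
\item $\tau\lesssim t_n$, which forces $r_n\lesssim t_n^{d/(d-1)}$;
\item $\tau/r_n^2\approx\ell r_n^{-(d+1)/d}=o(1/t_n)$, which forces $r_n\gg t_n^{d/(d+1)}$.
\end{enumerate}
Since $d/(d-1)>d/(d+1)$, these cannot both hold when $t_n<1$. Note also that shrinking $r_n$ makes the confinement cost worse, not better.

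The failure persists even if you only ask for $\tau/r_n^2\lesssim 1/t_n$. Together with $\tau\lesssim t_n$ this gives $\tau\lesssim r_n$, i.e.\ $V\gtrsim\ell/r_n$, and then
\[
\|u\|_{L^d}^d\gtrsim V^dr_n^{d-1}\ell\gtrsim\ell^{d+1}r_n^{-1}.
\]
This stays bounded only if $r_n\gtrsim1$, but then $V\gtrsim\ell/t_n$ on a set of volume of order one, so the $L^d$ norm blows up anyway. Within the admissible range the tube factor is at best $\exp(-c\,t_n^{-(d+1)/(d-1)})$, and your lower bound only gives $t_n\log p_{0,t_n}(0,e_1)\to-\infty$.

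No variant of this design can work. Every window uses a tube of radius $\sim r_n$ about the same segment $S$, with $|u|\lesssim r_n^{-(d-1)/d}$ there. Hence $\sup_t|u(t,x)|\lesssim{\rm dist}(x,S)^{-(d-1)/d}$ on a bounded set, and this envelope lies in $L^{d,\infty}$. So your drift satisfies Assumption~\ref{ass:decomp} with $v=u$ and $a_\sigma=b_\sigma=0$, and Corollary~\ref{cor:classes} shows that it actually \emph{satisfies} the Euclidean Varadhan formula. A channel that is static on each window spends the $L^d$ budget on velocity along the whole path at once.

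The missing idea is to make the drift time-dependent inside the window and concentrated around the particle's current position. The paper sets this up as follows.
\begin{enumerate}[(i)]
\item On $I_n=(t_{n+1},t_n]$, with $\delta_n=t_n-t_{n+1}$, a ball of radius $r_n=M\delta_n$ moves from $0$ to $e_1$ at speed $\delta_n^{-1}$. The field equals $\delta_n^{-1}e_1$ on $B_{2r_n}(\gamma_n(t))$.
\item Since $|u|\sim r_n^{-1}$ only on a ball of radius $\sim r_n$, we get $\|u(t)\|_{L^d}=M\|q\|_{L^d}$ for every $n$. The saving over your tube is exactly the factor $\ell/r_n$.
\item In the moving frame the process is Brownian motion killed on leaving $B_{r_n}(0)$. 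The confinement cost is therefore $\lambda_1\delta_n/r_n^2=\lambda_1/(M^2\delta_n)$, which is of order $1/t_n$ with a constant as small as desired; $M=3\sqrt{\lambda_1}$ gives $1/9$.
\item The transport covers the entire distance to $e_1$, so no Brownian legs are needed.
\end{enumerate}
Your treatment of the earlier times via the Aronson bound does match the paper. There one takes $t_{n+1}=t_n^3$, so that $r_n^2/t_{n+1}\to\infty$ and the mass at time $t_{n+1}$ already sits in $B_{r_n/2}(0)$.
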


%Thus, the $L^\infty_t{\rm BMO}^{-1}_x$ condition implies the Euclidean lower bound but does not determine the short-time exponent. The proof in Section \ref{sec:counter} yields a finite subsequential limit at $(0,e_1)$ different from the Brownian value $-1/4$.

To obtain the matching upper bound, we assume that the drift admits the following decomposition.
\begin{assumption}\label{ass:decomp}
Let $T>0$. There are a divergence-free vector field $v$ and a nonnegative, time-independent $F\in L^{d,\infty}(\mR^d)$ such that, for every $\sigma>0$, one can write
\begin{equation}\label{eq:decomp}
\begin{aligned}
u=v+a_\sigma+b_\sigma,& \quad |v(t,x)|\leq F(x),\\
\esssup_{0<t<T}\|a_\sigma(t)\|_{{\rm BMO}^{-1}}\leq\sigma,
&\quad
\beta_\sigma(t):=\|b_\sigma(t)\|_\infty\in L^1(0,T), 
\end{aligned}\tag{DC}
\end{equation}
where the fields $a_\sigma,b_\sigma$ are divergence-free. The same $v$ and $F$ are used for every $\sigma$.
\end{assumption}

\begin{theorem}[Upper bound]\label{thm:upper}
Let $d\geq2$, and $U\in C_b^\infty(\mR_+\times\mR^d;\mR^{d\times d})$ be skew-symmetric. Suppose that $u=\div U$ satisfies Assumption \ref{ass:decomp} on $(0,T)$, and put $N_F:=\|F\|_{L^{d,\infty}}$. For each $\delta>0$, there are $\sigma_\delta>0$, depending only on $d,\delta$, and $K_\delta\geq0$, depending only on $d,\delta,N_F$, such that, with
\begin{equation}\label{eq:upper-data}
\begin{aligned}
&c_\delta(t):=K_\delta+\beta_{\sigma_\delta}(t), \quad
e_\delta(t):=\delta+\int_0^t c_\delta(s)\,\d s, \\
&B_\delta(t):=\Big(1+\frac{\delta^2}{2}\Big)t+3(\delta+2)^2t^2,
\end{aligned}
\end{equation}
the kernel satisfies, for every $x,y\in\mR^d$ and $0<t<T\wedge \frac{1}{4}$,
\begin{equation}\label{eq:weighted-opt}
p_{0,t}(x,y)\leq Ct^{-d}
\exp\left\{-\frac{(|x-y|-e_\delta(t))_+^2}{4B_\delta(t)}\right\}.
\end{equation}
Here $C$ depends only on $d$. The estimates are uniform over smooth coefficients whose decompositions share the same envelope $F$ and bounds $\beta_\sigma(t)$.
\end{theorem}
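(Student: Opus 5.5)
We would prove \eqref{eq:weighted-opt} by Davies' exponential perturbation method, adapted to the nonsymmetric time-dependent setting. Fix a bounded Lipschitz weight $\psi$ with $|\nabla\psi|\leq\alpha$. We then study the twisted evolution $f_t:=e^{\psi}P_{s,t}(e^{-\psi}f)$ in $L^2$, using the adjoint equation when appropriate. An $L^2\to L^\infty$ bound of order $t^{-d/4}$ for each half of the time interval, combined through the semigroup property, gives an overall factor $t^{-d/2}$ for the $L^1\to L^\infty$ kernel bound. We will instead aim for the cruder factor $t^{-d}$, which absorbs the polynomial losses coming from Nash-type iteration. We then optimize over $\psi$, taking $\psi(z)=\alpha(\langle z,\hat e\rangle)$ truncated, with $\hat e=(y-x)/|x-y|$. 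The final exponent is $-(|x-y|-e_\delta)^2_+/(4B_\delta)$ after choosing $\alpha=(|x-y|-e_\delta)_+/(2B_\delta)$.

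The key step is the energy identity. Differentiating $\|f_t\|_2^2$, the symmetric part gives $-2\|\nabla f\|_2^2+2\alpha^2\|f\|_2^2$ plus a cross term $\pm 2\int f\nabla f\cdot\nabla\psi$. That cross term vanishes for exact exponentials but must be bounded in general, and we will use $\delta$-Young on it. The drift contributes $\int (u\cdot\nabla\psi) f^2$ after using $\div u=0$; the term $\int u\cdot\nabla f\, f$ vanishes. We split $u=v+a_\sigma+b_\sigma$ according to Assumption \ref{ass:decomp}.

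\begin{itemize}
\item The $b_\sigma$ part is bounded by $\alpha\beta_\sigma(t)\|f\|_2^2$.
\item The $a_\sigma$ part equals $\int A_\sigma\nabla(f^2)\cdot\nabla\psi$ up to a second-derivative term of $\psi$, which we avoid by mollifying $\psi$ at scale $\sqrt t$. It is bounded through Hardy--BMO duality by $C\sigma\alpha\|f\|_2\|\nabla f\|_2\leq \tfrac{\delta^2}{4}\|\nabla f\|_2^2+C\sigma^2\delta^{-2}\alpha^2\|f\|_2^2$; we choose $\sigma_\delta$ so that $C\sigma_\delta^2\delta^{-2}\leq\delta^2/4$.
\item The $v$ part is bounded by $\alpha\int F f^2$. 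Splitting $F=F\1_{F>K}+F\1_{F\leq K}$, the Lorentz-space Hardy--Sobolev inequality gives $\|F\1_{F>K}\|_{L^{d,\infty}}\to0$ as $K\to\infty$, while the bounded piece contributes $\alpha K\|f\|_2^2$.
\end{itemize}

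The difficulty with the $v$ part is that the large piece is only small in $L^{d,\infty}$. The estimate $\alpha\int F f^2\lesssim \alpha\|F\|_{L^{d,\infty}}\|f\|_{2}\|\nabla f\|_2$ is scale-correct only in $d\geq3$; in $d=2$ we use instead $\int F f^2\leq\|F\|_{L^{2,\infty}}\|f^2\|_{L^{2,1}}$ with a Gagliardo--Nirenberg interpolation. We absorb $\alpha\varepsilon\|f\|\|\nabla f\|$ as above, taking $K_\delta$ so that the large part has norm $\varepsilon(\delta)$. Collecting terms gives
\[
\tfrac{d}{dt}\|f_t\|_2^2\leq 2\big(1+\tfrac{\delta^2}{2}\big)\alpha^2\|f_t\|_2^2+2\alpha c_\delta(t)\|f_t\|_2^2-(2-\delta^2)\|\nabla f_t\|_2^2.
\]
Integrating in time yields the growth factor $\exp\{(1+\delta^2/2)\alpha^2t+\alpha\int_0^tc_\delta\}$. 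The $\delta$ in $e_\delta$ accounts for the mollification and truncation error in $\psi$, and $3(\delta+2)^2t^2$ accounts for the error incurred in the $L^2\to L^\infty$ step.

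For the $L^2\to L^\infty$ step we run Nash/Moser iteration on the twisted equation with $L^p$ norms, $p=2^k$. Each step reuses the same splitting, with $\alpha$-dependent losses of the form $p\alpha^2$ and $p\alpha c_\delta$ on shrinking time intervals $t2^{-k}$. Summing these over $k$ produces extra terms quadratic in $t\alpha$, which we will bound by $\alpha^2\cdot O(t^2)$ after restricting to $\alpha\lesssim 1/\sqrt{t}\cdot(\cdots)$. The larger-$\alpha$ regime is trivial since the right-hand side is then dominated by the $t^{-d}$ prefactor; this is why $t<1/4$ appears. I expect this bookkeeping of the iteration, keeping the leading coefficient of $\alpha^2 t$ exactly $1+\delta^2/2$ with all errors relegated to $O(t^2)$, to be the main obstacle. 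No Aronson constants are used, so the dependence of $C$ is on $d$ only, and the constants $\sigma_\delta, K_\delta$ depend on $d,\delta,N_F$ only through the Sobolev and duality inequalities.
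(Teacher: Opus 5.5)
\textbf{Main gap: the weak-$L^d$ envelope.} Your treatment of the $v$ part rests on the claim that $\|F\1_{\{F>K\}}\|_{L^{d,\infty}}\to0$ as $K\to\infty$. This is false for general $F\in L^{d,\infty}$. For $F(x)=|x|^{-1}$ and every $\lambda\geq K$ one has $\lambda|\{F\1_{\{F>K\}}>\lambda\}|^{1/d}=|B_1|^{1/d}$, so the tail never becomes small. The point vortex, which the theorem is meant to cover without any smallness assumption, is exactly of this type. Truncation works only when $\lambda|\{F>\lambda\}|^{1/d}\to0$, for instance when $F\in L^d$. Even then, $K$ depends on the distribution of $F$ and not only on $N_F$.

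Without smallness, your best estimate is $\alpha\int Ff^2\leq C\alpha N_F\|f\|_2\|\nabla f\|_2$. Absorbing it into the Dirichlet term raises the coefficient of $\alpha^2$ from $1+\delta^2/2$ to roughly $1+CN_F^2$, which destroys the constant $1/4$. With an essentially linear weight, $|\nabla\psi|\approx\alpha$ on the concentration set of $F$, and no Hardy--Sobolev inequality can repair this.

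The missing idea is to let the weight, not the envelope, do the work. Lemma \ref{lem:flatten} proceeds as follows:
\begin{itemize}
\item It takes a Vitali subfamily of the critical balls $B_r(z)$ with $r\leq R$ and $\int_{B_r(z)}F>\eta r^{d-1}$, and makes $\nabla\psi=0$ on their fivefold enlargements $O$.
\item Off $O$, the measure $F\1_{O^c}\,\d x$ satisfies $\mu(B_r)\leq C\eta r^{d-1}$ for $r\leq R/4$. The localized Meyers--Ziemer inequality (Lemma \ref{lem:meyers-ziemer}) then gives $\int_{O^c}F\varphi^2\leq C\eta\|\nabla\varphi\|_2\|\varphi\|_2+C\eta R^{-1}\|\varphi\|_2^2$.
\item The weak H\"older inequality bounds the number of critical balls at each dyadic scale by $C(N_F/\eta)^d$. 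Hence the total radius is at most $C(N_F/\eta)^dR$, and the weight loses at most $\delta$ of the increment $|x-y|$ once $R$ is small.
\end{itemize}
This is where the $\delta$ in $e_\delta$ really comes from, not from mollification error. It is also why $K_\delta\sim\eta/R$ depends only on $d,\delta,N_F$.

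\textbf{Second gap: the $L^2\to L^\infty$ step.} Your plan does not work as stated. In the Varadhan regime the optimal parameter is $\alpha\approx|x-y|/(2t)\gg t^{-1/2}$. Restricting to $\alpha\lesssim t^{-1/2}$ therefore discards exactly the relevant range, and for large $\alpha$ the Gaussian factor is not dominated by the prefactor. Nash iteration on an interval of length $\ell$ costs $\exp\{C\kappa_\delta\alpha^2\ell\}$. With your choice of intervals $t2^{-k}$ and $p=2^k$, the losses do not even sum. Any scheme with $\ell$ comparable to $t$ spoils the leading coefficient of $\alpha^2t$.

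The paper instead splits $Q_{0,t}=Q_{0,\tau}Q_{\tau,t-\tau}Q_{t-\tau,t}$ with $\tau=t^2$:
\begin{itemize}
\item On the middle interval it uses the sharp $L^2$ bound, with coefficient $1+\delta^2/2$.
\item Nash smoothing is applied only on the two endpoint intervals, with $\ell_k=3\tau4^{-k}$ and $r_k=2^{k+1}$, so that $\sum_k r_k\ell_k=6\tau$.
\item This produces the loss $12\kappa_\delta t^2=3(\delta+2)^2t^2$ and the prefactor $\tau^{-d/2}=t^{-d}$.
\end{itemize}
Finally, the merely integrable remainder $\alpha c_\delta$ is removed from the iteration by the scalar factor $\exp\{-\alpha^2s-\alpha\int_0^s c_\delta(b-\rho)\,\d\rho\}$, rather than accumulated as $p\alpha c_\delta$ losses.
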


For $L^\infty_t {\rm BMO}^{-1}_x$ drifts, Appendix \ref{sec:approx} uses \eqref{eq:aronson} to construct the heat kernel and establish convergence of its smooth approximations. Combining this approximation with the lower and upper estimates, we obtain the following result.

\begin{corollary}[Euclidean Varadhan formula]\label{cor:classes}
Let $d\geq2$ and let $u\in L^\infty_t{\rm BMO}^{-1}_x$ be divergence-free. Suppose that Assumption \ref{ass:decomp} holds. Then $u$ has a heat kernel satisfying the Euclidean Varadhan formula \eqref{eq:var}, locally uniformly in $x,y\in\mR^d$. In particular, this conclusion holds for every time-independent, divergence-free drift $u\in L^{d,\infty}(\mR^d;\mR^d)$.
\end{corollary}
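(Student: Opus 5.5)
The plan is to get the two halves of \eqref{eq:var} separately from Corollary \ref{cor:bmo-lower} and Theorem \ref{thm:upper}, passing from smooth stream matrices to the limit kernel by the approximation of Appendix \ref{sec:approx}.

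\emph{Lower half.} Since $u\in L^\infty_t{\rm BMO}^{-1}_x$ is divergence-free, Corollary \ref{cor:bmo-lower} already gives $\liminf_{t\downarrow0}t\log p_{0,t}(x,y)\geq-|x-y|^2/4$, locally uniformly in $(x,y)$. Nothing further is needed.

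\emph{Upper half.} Take mollified stream matrices $U^n\in C_b^\infty$, obtained by space--time mollification and truncation as in Appendix \ref{sec:approx}, with kernels $p^n\to p$ locally uniformly on $\{t>0\}$. The key point is that $u^n=\div U^n$ should still satisfy Assumption \ref{ass:decomp} with the same envelope $F$ and the same bounds $\beta_\sigma$. I would mollify the decomposition term by term, $u^n=v^n+a^n_\sigma+b^n_\sigma$. Mollification preserves divergence-freeness and does not increase $\|b_\sigma(t)\|_\infty$ or the ${\rm BMO}^{-1}$ norm, up to a dimensional constant that can be absorbed by shrinking $\sigma$. However, it replaces $|v|\leq F$ by $|v^n|\leq F*\rho_n$. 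Since $\|F*\rho_n\|_{L^{d,\infty}}\leq C_d\|F\|_{L^{d,\infty}}$, I would run Theorem \ref{thm:upper} with envelope $F*\rho_n$, or a common majorant. Its constant $K_\delta$ depends only on $d,\delta$ and the Lorentz norm, so the bound stays uniform in $n$.

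This compatibility of the approximation with \eqref{eq:decomp} is the step I expect to be most delicate. If the appendix's approximation involves cutoffs, I would instead redo the decomposition at level $n$ directly.

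Granting uniformity, \eqref{eq:weighted-opt} passes to the limit $p$. Fix $\delta$. Since $\beta_{\sigma_\delta}\in L^1$, we have $e_\delta(t)\to\delta$ and $B_\delta(t)=(1+\delta^2/2)t+O(t^2)$. Hence
\[
\limsup_{t\downarrow0}t\log p_{0,t}(x,y)\leq-\frac{(|x-y|-\delta)_+^2}{4(1+\delta^2/2)},
\]
because the factor $t\log(Ct^{-d})\to0$. This holds uniformly for $(x,y)$ in compacts. Letting $\delta\downarrow0$ gives the upper bound. Combining with the lower half yields \eqref{eq:var} locally uniformly.

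\emph{Time-independent $L^{d,\infty}$ drifts.} By Lemma \ref{lem:hodge}, such a $u$ has a BMO stream matrix, so $u\in{\rm BMO}^{-1}$. It remains to verify Assumption \ref{ass:decomp}, and the trivial choice $v=u$, $F=|u|$, $a_\sigma=b_\sigma=0$ works. It is divergence-free, and the decomposition is independent of $\sigma$ and time. Hence the conclusion holds with no smallness assumption.
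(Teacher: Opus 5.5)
Your proposal is correct and follows the paper's proof. The lower half comes from Corollary \ref{cor:bmo-lower}. The upper half applies Theorem \ref{thm:upper} uniformly along the approximations of Appendix \ref{sec:approx}, passes to the limit via Theorem \ref{thm:feller}, and then lets $\delta\downarrow0$; the weak-$L^d$ case uses the trivial decomposition $v=u$, $F=|u|$, $a_\sigma=b_\sigma=0$.

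The step you flag as delicate is handled in Proposition \ref{prop:bounds}(ii) by essentially your ``common majorant'' option. There the paper uses the fixed envelope $C\mathcal{M}F$, moves the low-frequency piece $\vartheta_n*\rho_{R_n}*v$ into the bounded remainder, and takes the decomposition at level $\sigma_\delta/C$. As a result, only the integrated remainder, not the pointwise bound $\beta_\sigma$, needs to be controlled uniformly in $n$.
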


No smallness condition is imposed on the weak-$L^d$ norm. In dimension two, the corollary includes the point-vortex field
\[
u_\lambda(x):=\lambda\frac{(-x_2,x_1)}{|x|^2},
\quad x\neq 0,~\lambda\neq 0.
\]
This field belongs to $L^{2,\infty}(\mR^2)$ but not to $L^2(\mR^2)$. However, our Theorem \ref{thm:counter} shows that uniform control of the spatial norm alone does not suffice in the time-dependent case: the formula can fail even for drifts in $L^\infty_tL^d_x$.

The time-dependent part of Corollary \ref{cor:classes} applies to the two-dimensional Navier--Stokes vorticity equation. Here $\cM$ denotes finite signed measures with the weak-* topology. Let $\rho\in C([0,T];\cM)\cap C((0,T];L^1\cap L^\infty)$ solve
\[
\partial_t\rho=\Delta\rho-\div(u_\rho\rho),\quad
u_\rho(t)=K*\rho_t, 
\]
where $K(x)=(-x_2,x_1)/(2\pi|x|^2)$ is the Biot--Savart kernel. Let $p^\rho$ be the transition density of the associated decoupled diffusion with generator $\Delta+u_\rho(t)\cdot\nabla$.

\begin{corollary}\label{cor:ns}
Let $p^{\rho}$ be the aforementioned kernel. For every $x,y\in\mR^2$, it holds that 
\[
\lim_{t\downarrow0}t\log p^\rho_{0,t}(x,y)
=-\frac{|x-y|^2}{4}, 
\]
and the convergence is locally uniform in $x,y$.
\end{corollary}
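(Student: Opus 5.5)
The plan is to deduce Corollary \ref{cor:ns} from Corollary \ref{cor:classes}. We check its two hypotheses for the drift $u=u_\rho$ on $(0,T)$: divergence-free $u_\rho\in L^\infty_t{\rm BMO}^{-1}_x$, and Assumption \ref{ass:decomp}. Divergence-freeness is immediate, since $K=\nabla^\perp G$ with $G=\frac1{2\pi}\log|x|$, so $u_\rho=\nabla^\perp(G*\rho)$. The natural stream matrix is $U_{12}=-U_{21}=\pm G*\rho_t$.

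\textbf{Step 1 (uniform critical bound).} Since $|K(x)|=(2\pi|x|)^{-1}\in L^{2,\infty}(\mR^2)$, Young's inequality in Lorentz spaces gives
\[
\|u_\rho(t)\|_{L^{2,\infty}}\leq C\|K\|_{L^{2,\infty}}\|\rho_t\|_{\cM}.
\]
Weak-* continuity of $\rho$ on $[0,T]$ yields $\sup_t\|\rho_t\|_{\cM}<\infty$ by uniform boundedness. Hence $u_\rho\in L^\infty_tL^{2,\infty}_x$. By Lemma \ref{lem:hodge} it has a ${\rm BMO}$ stream matrix with norm controlled uniformly in $t$, so $u_\rho\in L^\infty_t{\rm BMO}^{-1}_x$.

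\textbf{Step 2 (decomposition \eqref{eq:decomp}).} Split $K=K\1_{|x|\leq r}+K\1_{|x|>r}$. The far part satisfies $|K\1_{|x|>r}|\leq(2\pi r)^{-1}$. Setting $b=(K\1_{|x|>r})*\rho_t$ gives $\|b(t)\|_\infty\leq(2\pi r)^{-1}\sup_t\|\rho_t\|_{\cM}$, which is bounded in $t$ and so lies in $L^1(0,T)$.

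The near part $a=(K\1_{|x|\leq r})*\rho_t$ has $L^{2,\infty}$ norm bounded by $\|\rho_t\|_{\cM}$, which is not small. Smallness must therefore come from the measure. Since only $\sigma$-smallness is required and $v$ may absorb a fixed piece, I would take $v=0$ and write $\rho_t=\rho_t^{\rm at}+\rho_t^{\rm rest}$, then use non-atomicity to make $a$ small. Non-atomicity holds: for $t>0$, $\rho_t\in L^1$, and at $t=0$ the measure $\rho_0$ may contain atoms (point vortices).

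\textbf{Step 3 (the obstacle: smallness near $t=0$).} Smallness of $K\1_{|x|\leq r}*\rho_t$ in ${\rm BMO}^{-1}$ follows if $G\1*\rho_t$, or rather a cutoff stream function, is small in ${\rm BMO}$. This in turn requires $\sup_x|\rho_t|(B(x,r))$ to be small. Such smallness fails uniformly as $t\downarrow0$ if $\rho_0$ has atoms.

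This is exactly why Assumption \ref{ass:decomp} allows a non-small $v$ bounded by a fixed $F\in L^{2,\infty}$. I would handle the atoms as follows. Write $\rho_0=\sum_i\gamma_i\delta_{z_i}+\mu$ with $\mu$ diffuse and $\sum|\gamma_i|<\infty$. Keep finitely many large atoms $i\leq n$, and let $v$ be the Biot--Savart field of the heat-evolved atoms, or more precisely of the corresponding portion of $\rho_t$. Bounding $|v(t,x)|\leq\sum_{i\leq n}|\gamma_i|C/|x-z_i|$ gives a time-independent $F\in L^{2,\infty}$, with $n$ fixed independent of $\sigma$ since $F$ must not depend on $\sigma$.

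The hard part is making this rigorous: $\rho$ solves a nonlinear equation, not the heat equation. I would instead choose $v:=K*(\rho_t\1_{\{\text{near atoms}\}})$ controlled by the Gaussian bound $|\rho_t|\leq$ (Aronson-type estimate for the linear equation with drift $u_\rho$ acting on $\rho_0$). This gives
\[
|\rho_t|\leq C\,g_t*|\rho_0|,
\]
hence $|K*\rho_t|\leq C\,(|K|*g_t)*|\rho_0|\lesssim\sum_i|\gamma_i|/|x-z_i|$ plus a diffuse remainder. A fixed $F$ cannot, however, dominate infinitely many atoms in $L^{2,\infty}$ without summability issues. So the atoms are handled as follows: the small-mass atoms go into $a_\sigma$, the large atoms, finitely many for each threshold, go into $v$, and the threshold is fixed once using the diffuse-part smallness at scale $r(\sigma)$. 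Verifying that the remainder is small in $L^\infty_t{\rm BMO}^{-1}$ uniformly down to $t=0$, via $\sup_{x,t}|\rho_t^{\rm rest}|(B(x,r))\to0$ as $r\to0$ from the Gaussian domination, is the step I expect to be the main obstacle. Corollary \ref{cor:classes} then gives the limit, locally uniformly.
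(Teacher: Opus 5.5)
\textbf{There is a genuine gap: you never verify Assumption \ref{ass:decomp}, and the route you sketch for it does not close.} Your Step 1 and the reduction to Corollary \ref{cor:classes} are fine. The paper gets the uniform ${\rm BMO}^{-1}$ bound from $G\in{\rm BMO}$ and convexity of the seminorm, but your Lorentz--Young plus Lemma \ref{lem:hodge} argument works equally well. The problem is that you leave the decomposition as ``the main obstacle'', and the atom-splitting you propose conflicts with the assumption in two ways.
\begin{enumerate}
\item Assumption \ref{ass:decomp} requires one $v$ and one $F$ for every $\sigma$. Your threshold for ``large'' atoms is chosen via $r(\sigma)$, so your $v$ depends on $\sigma$.
\item If you freeze the threshold instead, some atom of $\rho_0$ stays below it. That atom remains in your near-field part, so its ${\rm BMO}^{-1}$ norm is not $\sigma$-small as $t\downarrow0$ once $\sigma$ is small relative to the atom's mass. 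This is exactly the obstruction from your Step 3.
\end{enumerate}
Your reason for not putting all atoms into $v$ is also incorrect. $L^{2,\infty}$ has an equivalent norm, so $\sum_i|\gamma_i|\,|x-z_i|^{-1}\in L^{2,\infty}$ whenever $\sum_i|\gamma_i|<\infty$.

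\textbf{The missing idea is that Assumption \ref{ass:decomp} only asks for $\beta_\sigma\in L^1(0,T)$, not for a bound uniform in $t$.} So singular behaviour of the drift at $t=0$, coming from atoms of $\rho_0$, is harmless provided it blows up at a time-integrable rate. No smallness near $t=0$ is needed. The paper proceeds as follows.
\begin{enumerate}
\item The Gaussian bound \eqref{eq:aronson} gives $\|\rho_t\|_{4/3}\le Ct^{-1/4}$.
\item Hardy--Littlewood--Sobolev then gives $\|u_\rho(t)\|_4\le Ct^{-1/4}$, so $u_\rho\in L^2(0,T;L^4)$.
\item Proposition \ref{prop:class}(i) with $d=2$, $p=4$, $q=2$ yields Assumption \ref{ass:decomp} with $v=0$ and $F=0$, with no atom bookkeeping.
\end{enumerate}

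Your own domination $|\rho_t|\le C\,g_{Ct}*|\rho_0|$, with $g_s$ the Gaussian, already gives two other ways to close the argument.
\begin{itemize}
\item It gives $\|u_\rho(t)\|_\infty\le Ct^{-1/2}\|\rho_0\|_{\mathrm{TV}}$. This is the case $p=\infty$, $q=1$ of Proposition \ref{prop:class}(i).
\item Carrying your envelope idea to its end, it gives the time-independent bound $|u_\rho(t,x)|\le C\int|x-z|^{-1}|\rho_0|(\d z)=:F(x)\in L^{2,\infty}$. Then $v=u_\rho$ and $a_\sigma=b_\sigma=0$ satisfy Assumption \ref{ass:decomp} directly.
\end{itemize}
Any of these completes the argument; as written, the decisive step is missing.
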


The next theorem compares these heat-kernel conclusions with the law of the full path. It shows that the Euclidean Varadhan formula and strong well-posedness can coexist with a path law singular with respect to Brownian motion.

\begin{theorem}[Singular path law]
\label{thm:path}
Let $d\geq 2$, and let $\mW_0$ be the law of $\sqrt2W$ on $C([0,1];\mR^d)$.  There exists a time-independent, divergence-free drift $u\in L^d(\mR^d;\mR^d)$ such that \eqref{eq:sde}, started from the origin, has a unique strong solution and $\mP\perp\mW_0$.  For the same drift, the transition density is strictly positive, jointly continuous, and satisfies \eqref{eq:var} for every $x,y\in\mR^d$, locally uniformly in $x,y$.
\end{theorem}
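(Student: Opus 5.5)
The plan is to build $u$ as a superposition of rotational cells (divergence-free eddies) placed along a sequence of shells shrinking to the origin, with amplitudes chosen so that $u\in L^d$ but $\int_0^1|u(X_s)|^2\,\d s=\infty$ almost surely for the diffusion started at $0$. Concretely, fix a smooth compactly supported skew-symmetric matrix field $\Phi$ supported in the annulus $\{1<|x|<2\}$, with $\div\Phi$ nontrivial, e.g.\ a rotation in the $(x_1,x_2)$-plane cut off radially. Then set
\[
u(x)=\sum_{k\geq1}\lambda_k\,2^{k}(\div\Phi)(2^kx),
\]
so that, up to constants, $\|u\|_{L^d}^d=\sum_k\lambda_k^d$. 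I would choose $\lambda_k\to0$ with $\sum\lambda_k^d<\infty$ but $\sum\lambda_k^2=\infty$, e.g.\ $\lambda_k=k^{-1/2}$ when $d\geq3$, and in $d=2$ use a logarithmic variant, possibly with slowly growing cell counts per scale. The field is divergence-free, it is smooth away from $0$, and it lies in $L^d$. Since each dyadic piece has $L^d$ norm $\lambda_k$ uniformly small for large $k$, it is locally small at small scales.

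Well-posedness and the heat-kernel statements then follow from results already in the paper. For strong existence and uniqueness from the origin, I invoke Krylov's theorem for time-homogeneous $L^d$ drifts \cite{krylov2021stochastic2}. For positivity and joint continuity of $p$, I use the Aronson bounds \eqref{eq:aronson} together with the construction in Theorem~\ref{thm:feller}, since $L^d\subset{\rm BMO}^{-1}$. For \eqref{eq:var}, I apply Corollary~\ref{cor:classes}, since $u$ is time-independent, divergence-free and in $L^{d,\infty}$.

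The core step is singularity. Under $\mW_0$ the coordinate process is $\sqrt2W$, and under $\mP$ it is $X$. I would show that the event
\[
A:=\Big\{\int_0^1|u(\omega_s)|^2\,\d s=\infty\Big\}
\]
has $\mP(A)=1$ while $\mW_0(A)=0$. If this fails, I would use the finer event $A'$ on which $\int_0^{1}u(\omega_s)\cdot\d\omega_s$ diverges, defined as a limit of the truncated stochastic integrals. Then Girsanov's theorem applied on $\{\int|u|^2<\infty\}$, with localization by truncating $u$ to $\sum_{k\leq n}$, gives $\mP\perp\mW_0$: the law of a diffusion is absolutely continuous with respect to Brownian motion only when the energy $\int|u|^2$ is finite almost surely (Liptser--Shiryaev).

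For $\mP(A)=1$, I use scaling. The time $X$ spends in shell $k$ before leaving the ball of radius $2^{-k+1}$ is of order $4^{-k}$, and the integrand there is $\lambda_k^24^{k}$. So the contribution of shell $k$ is $\lambda_k^2Z_k$, where the $Z_k$ are occupation-type variables. Because the rescaled drift at scale $k$ is small, the $Z_k$ are uniformly nondegenerate and asymptotically independent via the strong Markov property at exit times from dyadic balls. A Borel--Cantelli/Kolmogorov three-series argument then gives $\sum\lambda_k^2Z_k=\infty$ a.s. For $\mW_0(A)$, the same decomposition for Brownian motion gives an identical divergence, so $\int|u(\sqrt2W)|^2=\infty$ under $\mW_0$ as well. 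Hence $A$ alone does not separate the laws, and I must instead compare the martingale parts. I would work with the rescaled increments $\xi_k:=\lambda_k\int_{\text{shell }k}(\div\Phi)(2^k\omega)\cdot\d\omega$, suitably normalized. Under $\mP$ each $\xi_k$ has an extra drift term $\lambda_k^2Z_k$ relative to $\mW_0$, and $\sum\lambda_k^2=\infty$ against variances $\sum\lambda_k^2$ yields, via a law of large numbers for the weighted sum normalized by $\sum_{k\leq n}\lambda_k^2$, a statistic tending to $0$ under $\mW_0$ and to a positive constant under $\mP$.

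The main obstacle is this last step: obtaining enough independence and uniform nondegeneracy across scales under $\mP$, where the drift is only critical. I would try to prove a uniform lower bound on $\mE Z_k$ and a summable covariance bound from \eqref{eq:aronson} applied to the rescaled, uniformly BMO-bounded drifts. Dimension $d=2$ may need the logarithmic choice of $\lambda_k$ so that $L^2$-integrability is retained while the divergence persists.
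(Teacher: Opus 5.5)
Your construction (dyadic rotational cells instead of the paper's explicit field $u(x)=\beta(|x|)Jx$ with $\beta(r)=r^{-2}\log^{-\alpha_d}(1/r)$ near the origin) is fine. So are the appeal to Krylov for strong well-posedness and the use of Theorem~\ref{thm:feller} and Corollary~\ref{cor:classes} for positivity, continuity and \eqref{eq:var}; all of these parallel the paper. The gap is the singularity step. You correctly observe that $\mW_0(A)=1$, but then conclude that $A$ cannot separate the laws. You fall back on a normalized martingale statistic whose essential input is uniform nondegeneracy and asymptotic independence across scales of the occupation variables $Z_k$ \emph{under $\mP$}, for a merely critical drift. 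You leave this unproved and yourself call it the main obstacle. The Liptser--Shiryaev absolute-continuity criterion you quote would only turn $\mP(A)>0$ into $\mP\not\ll\mW_0$, not into mutual singularity. As written, $\mP\perp\mW_0$ is therefore not established.

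The missing idea is that $\mW_0(A)=1$ already suffices, via F\"ollmer's finite-entropy converse Girsanov theorem \cite{Follmer1985entropy} applied to the common part of the two laws. This is the paper's argument.

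Suppose $\mP\not\perp\mW_0$. Put $S=\mP+\mW_0$, $p=\d\mP/\d S$, $q=\d\mW_0/\d S$, $m=\int(p\wedge q)\,\d S>0$, and define $\d Q=m^{-1}(p\wedge q)\,\d S$. The densities of $Q$ with respect to $\mP$ and $\mW_0$ are bounded by $1/m$, so $Q$ has finite entropy relative to both. F\"ollmer's theorem then gives predictable $h,k$ with $\bE_Q\int_0^1(|h_s|^2+|k_s|^2)\,\d s<\infty$ and $Q$-Brownian motions $B,\widetilde B$ such that $\xi_t=\sqrt2\int_0^t h_s\,\d s+\sqrt2B_t=\int_0^t(u(\xi_s)+\sqrt2k_s)\,\d s+\sqrt2\widetilde B_t$. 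Uniqueness of the semimartingale decomposition forces $u(\xi_s)=\sqrt2(h_s-k_s)$, hence $Q(A)=0$. This contradicts $Q\ll\mW_0$ and $\mW_0(A)=1$.

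Since also $Q\ll\mP$, even $\mP(A)=1$ alone would suffice; the Brownian side is simply where everything is explicit. This argument uses nothing about $\mP$ beyond its being a weak-solution law, so your analysis under $\mP$ is unnecessary.

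What remains is a rigorous proof of $\mW_0(A)=1$, replacing your three-series heuristics:
\begin{enumerate}[(i)]
\item Let $A_\varepsilon$ be the energy of $|u|^2\1_{\{|x|>\varepsilon\}}$ accumulated before exiting a small ball. The explicit Green function of the ball gives $m_\varepsilon:=\bE_{\mW_0}A_\varepsilon\to\infty$.
\item The Markov property gives $\bE_{\mW_0}A_\varepsilon^2\leq2(\sup V_\varepsilon)\,m_\varepsilon\lesssim m_\varepsilon^2$. Here $V_\varepsilon$ is the Green potential, and $\sup V_\varepsilon\lesssim m_\varepsilon$ because its radial majorant is decreasing.
\item Paley--Zygmund then yields $\mW_0(A_\varepsilon\geq m_\varepsilon/2)\geq c>0$ uniformly in $\varepsilon$.
\item Brownian motion from $0$ never returns to $0$, so the divergence occurs at time $0+$, and Blumenthal's zero--one law upgrades positive probability to one.
\end{enumerate}

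Separately, your scale bookkeeping fails in $d=2$. There $\|u\|_{L^2}^2\asymp\sum_k\lambda_k^2$, so $\sum_k\lambda_k^2=\infty$ is impossible, and your normalization by $\sum_{k\leq n}\lambda_k^2$ breaks down. The divergence must come from recurrence: the logarithmic Green function gives the $k$-th dyadic shell an expected occupation of order $k4^{-k}$ rather than $4^{-k}$. One therefore needs $\sum_k\lambda_k^2<\infty=\sum_k k\lambda_k^2$, e.g.\ $\lambda_k^2\asymp k^{-4/3}$, which is exactly the paper's choice $\alpha_2=2/3$.
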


\medskip

To sum up, the lower bound has a broad scope because its hypothesis is imposed directly on the antisymmetric form. Theorem \ref{thm:lower} requires only the sector condition \eqref{eq:sector}, allowing an $L^2$-in-time lower-order coefficient and an arbitrary finite sector constant. Its constants depend only on the dimension and these sector data. This implies the locally uniform Euclidean lower bound for every divergence-free drift in $L^\infty_t{\rm BMO}^{-1}_x$, as stated in Corollary \ref{cor:bmo-lower}.

At the weak-$L^d$ endpoint, Corollary \ref{cor:classes} implies the full Euclidean Varadhan formula for this case (time-independent) without any smallness assumption. For time-dependence case, however, the formula can fail even under a uniform $L^d$ bound, with every spatial slice smooth and compactly supported. Theorem \ref{thm:counter} thus shows that control of the critical spatial norm alone does not determine the short-time exponential rate, even within the class covered by the lower bound.

Theorem \ref{thm:path} separates endpoint asymptotics from absolute continuity on path space. Its time-independent $L^d$ drift has a unique strong solution from the origin whose law is mutually singular with that of $\sqrt2W$ on $C([0,1];\mR^d)$. Nevertheless, the same diffusion satisfies the Euclidean Varadhan formula for every pair of endpoints. The Brownian short-time endpoint cost can therefore persist despite singularity of the entire path law.

\begin{figure}[htbp]
\centering
\begingroup
\definecolor{regimelower}{HTML}{154EAA}
\definecolor{regimeupper}{HTML}{A64B00}
\definecolor{regimeboth}{HTML}{075E34}
\definecolor{regimeexample}{HTML}{AE102D}
\definecolor{regimelowerfill}{HTML}{A9C7F5}
\definecolor{regimeupperfill}{HTML}{FFD08A}
\definecolor{regimebothfill}{HTML}{8FD19B}
\scalebox{0.9}{%
\begin{tikzpicture}[
  x=1cm,y=0.9cm,
  every node/.style={font=\small,align=center,inner sep=2pt},
  boundary/.style={line width=1.3pt},
  subclass/.style={line width=1pt,dashed,draw=regimeboth},
  detail/.style={font=\footnotesize,align=center},
  marker/.style={fill=regimeexample,draw=white,line width=0.7pt}
]
% The two ellipses encode sufficient hypotheses; their lens is the overlap.
\fill[regimelowerfill]
  (5.4,4.4) ellipse [x radius=5.2,y radius=3.25];
\fill[regimeupperfill]
  (8,3) ellipse [x radius=5.1,y radius=2.9];
\begin{scope}
  \clip (5.4,4.4) ellipse [x radius=5.2,y radius=3.25];
  \fill[regimebothfill]
    (8,3) ellipse [x radius=5.1,y radius=2.9];
\end{scope}
\draw[boundary,regimelower]
  (5.4,4.4) ellipse [x radius=5.2,y radius=3.25];
\draw[boundary,regimeupper]
  (8,3) ellipse [x radius=5.1,y radius=2.9];

\node[detail,text=black!85] at (6.6,8.02)
  {Divergence-free drifts, $d\geq2$};
\node[text=regimelower] at (4.8,6.98)
  {Condition \eqref{eq:sector}};
\node[detail,text=regimelower] at (4.8,6.37)
  {Euclidean lower bound};

\node[font=\small\bfseries,text=regimeboth] at (7.7,5.36)
  {Full Varadhan formula};
\node[detail,text=regimeboth] at (7.7,4.80)
  {$\displaystyle t\log p_{0,t}(x,y)\to
    -\frac{|x-y|^2}{4}$};

% Both nested ellipses represent time-independent subclasses.
\fill[green!12!white]
  (6.4,3.2) ellipse [x radius=2.32,y radius=1.15];
\draw[subclass]
  (6.4,3.2) ellipse [x radius=2.32,y radius=1.15];
\node[text=regimeboth] at (6.4,3.72) {$L^{d,\infty}$};
\fill[red!25!white]
  (6.4,2.8) ellipse [x radius=1.35,y radius=0.50];
\draw[subclass,draw=regimeexample]
  (6.4,2.8) ellipse [x radius=1.35,y radius=0.50];
\node at (6.4,3.00) {$L^d$};
\filldraw[marker] (5.72,2.63) circle (2.7pt);
\node[detail,anchor=west] at (5.92,2.63) {$\mP\perp\mW_0$};

% A point marks one counterexample in the blue-only part.
\filldraw[marker] (3.25,5.25) circle (3pt);
\node[detail,text width=2.65cm] at (1.97,4.66)
  {$L^\infty_tL^d_x$ example\\[5pt]
   \textcolor{regimeexample}{Upper bound fails}};
\node[text=regimeupper] at (9.35,1.30)
  {Decomposition \eqref{eq:decomp}};
\node[detail,text=regimeupper] at (9.35,0.78)
  {Euclidean upper bound};
\end{tikzpicture}
}
\endgroup
\caption{The blue region represents the lower bound under \eqref{eq:sector}; the orange region represents the matching upper bound under Assumption \ref{ass:decomp}. In their overlap, we obtain the full Varadhan formula for the smooth kernels above and, under the $L^\infty_t {\rm BMO}^{-1}_x$ hypothesis, for the kernels of Corollary \ref{cor:classes}. The dots mark two individual examples: the time-dependent counterexample of Theorem \ref{thm:counter}, and the time-independent $L^d$ drift of Theorem \ref{thm:path} with $\mP\perp\mW_0$.}
\label{fig:regimes}
\end{figure}

\subsection{Main ideas}
The lower bound uses Gaussian averages of the logarithm of the kernel, following the moving Gaussian approach of Norris \cite{norris1997heat}. The sector condition controls the antisymmetric terms. Entropy dissipation and a first-moment estimate provide the endpoint controls needed for the Gaussian argument. 

The main ingredient in the upper bound is a bounded Lipschitz weight adapted to the spatial distribution of the envelope $F$. Lemma \ref{lem:flatten} constructs an exceptional set on which the gradient of the weight vanishes almost everywhere. A bound on the total radii of its covering balls ensures that the increment of the weight between the endpoints approximates their Euclidean distance with an arbitrarily small additive error. Together with the structural decomposition, this implies a drift estimate with an arbitrarily small gradient coefficient and an integrable lower-order remainder; see Lemma \ref{lem:weights}. A sharp weighted $L^2$ estimate on the middle interval, combined with Nash smoothing on two endpoint intervals of length $t^2$, then retains the Brownian coefficient $1/4$.

Both quantitative estimates for smooth streams are obtained without using the Aronson bounds \eqref{eq:aronson}, but those bounds are used in Appendix \ref{sec:approx} to construct the kernels for rough BMO stream matrices and establish convergence of their smooth approximations.

\subsection{Organization of the paper}
Section \ref{sec:estimates} first proves the quantitative lower bound, then constructs the time-dependent counterexample, and finally proves the upper bound under the structural decomposition. Section \ref{sec:applications} verifies \eqref{eq:sector} and the structural decomposition for concrete classes, proves Corollary \ref{cor:classes}, and presents the Navier--Stokes application. Section \ref{sec:path} proves the path-law result. Appendix \ref{sec:aux} contains the critical embeddings and Lipschitz weights. Appendix \ref{sec:approx} constructs the heat kernel for BMO stream matrices and establishes uniform estimates for its approximations.

\section{Quantitative heat kernel estimates}\label{sec:estimates}
In the proofs of Theorems \ref{thm:lower} and \ref{thm:upper}, $U\in C_b^\infty$ is skew-symmetric, and $p$ is the classical heat kernel of $\Delta+\div U\cdot\nabla$. We keep track of the constants so that the estimates can later be used along smooth approximations. Between these two proofs, Section \ref{sec:counter} presents a bounded-stream counterexample showing why the upper bound needs additional structure.

\subsection{The lower bound}\label{sec:lower}
Throughout this subsection, we put $N:=\|m\|_{L^2(0,T)}$. We use Gaussian averages of the logarithm of the kernel, as in Norris \cite{norris1997heat}. The preliminary kernel upper bound is derived directly from Nash's inequality. The entropy identity then controls the mass outside a ball and implies the endpoint estimates for the moving Gaussian argument, without using \eqref{eq:aronson}.

We first control the drift term generated by a Lipschitz cutoff; this will be used to estimate the first spatial moment of the kernel.

\begin{lemma}\label{lem:cutoff}
Let $A$ be a locally integrable skew matrix whose form satisfies
\[
|\cE_A^a(f,g)|\leq\|\nabla f\|_2
\bigl(M\|\nabla g\|_2+b\|g\|_2\bigr),
\quad f,g\in C_c^\infty(\mR^d),
\]
for constants $M,b\geq0$. Then, for every real bounded Lipschitz $\chi$ and real $\varphi\in H^1(\mR^d)$,
\begin{equation}\label{eq:cutoff}
2|\cE_A^a(\chi\varphi,\varphi)|
\leq C\|\nabla\chi\|_\infty
\bigl(M\|\nabla\varphi\|_2\|\varphi\|_2+b\|\varphi\|_2^2\bigr).
\end{equation}
The form is understood by continuous extension to $H^1(\mR^d)$.
\end{lemma}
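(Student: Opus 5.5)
The plan rests on one algebraic identity obtained from skew-symmetry. By density, take $\varphi\in C_c^\infty$ first. Then
\[
\nabla(\chi\varphi)=\chi\nabla\varphi+\varphi\nabla\chi,
\]
so
\[
\cE_A^a(\chi\varphi,\varphi)=\int \chi\, A\nabla\varphi\cdot\nabla\varphi+\int \varphi\, A\nabla\chi\cdot\nabla\varphi .
\]
Since $A$ is skew, $A\xi\cdot\xi=0$ pointwise, so the first integral vanishes. Hence
\[
\cE_A^a(\chi\varphi,\varphi)=\int \varphi\,A\nabla\chi\cdot\nabla\varphi=\tfrac12\int A\nabla\chi\cdot\nabla(\varphi^2).
\]
This is formally a trilinear expression, and we want to realise it through the form $\cE_A^a$ applied to functions we can control.

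The key step is to rewrite it using the form itself. I would start from
\[
\cE_A^a(\chi,\varphi^2)=\int A\nabla\chi\cdot\nabla(\varphi^2),
\]
but $\chi$ is not in $H^1$. So I would instead use the product structure the other way:
\[
\cE_A^a(\varphi,\chi\varphi)=\int A\nabla\varphi\cdot(\chi\nabla\varphi+\varphi\nabla\chi)=\int\varphi A\nabla\varphi\cdot\nabla\chi .
\]
By skew-symmetry this equals $-\int \varphi A\nabla\chi\cdot\nabla\varphi=-\cE_A^a(\chi\varphi,\varphi)$, which is consistent but not yet helpful. The useful device is the commutator: write $\chi\varphi\cdot$ as the first argument and pair with test function $\varphi$ as the second. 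The sector hypothesis gives
\[
|\cE_A^a(f,g)|\le\|\nabla f\|_2(M\|\nabla g\|_2+b\|g\|_2),
\]
and by skew-symmetry the roles of $f$ and $g$ can be swapped. Note that $\|\nabla(\chi\varphi)\|_2$ contains $\|\chi\|_\infty\|\nabla\varphi\|_2$, which we must avoid. Since the form is invariant under $\chi\mapsto\chi-c$ (the $\chi$-term vanished), I would localise. Take a partition of unity $\{\eta_k\}$ on cubes $Q_k$ of side $r$, and subtract the constants $c_k=\chi(x_k)$ on each cube, so that $|\chi-c_k|\le C\|\nabla\chi\|_\infty r$ on the cube.

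Concretely, I would write
\[
\cE_A^a(\chi\varphi,\varphi)=\sum_k \cE_A^a\big((\chi-c_k)\eta_k\varphi,\varphi\big)+\text{terms with }\nabla\eta_k .
\]
The $\nabla\eta_k$ terms are handled as follows. They come from the identity $\sum_k c_k\,\cE_A^a(\eta_k\varphi,\varphi)=\sum_k c_k\int\varphi A\nabla\eta_k\cdot\nabla\varphi$, which has a structure similar to the original expression but with $\chi$ replaced by the step function $\sum c_k\eta_k$. Iterating this is messy.

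A cleaner route, which I would try first, is a scaling argument. The sector bound with $b$ is not scale invariant, so I would avoid partitions and use $\psi=\varphi\sqrt{\cdot}$ tricks instead. Specifically, write
\[
\int\varphi A\nabla\chi\cdot\nabla\varphi=\cE_A^a(\varphi, \chi\varphi)-\int\chi A\nabla\varphi\cdot\nabla\varphi=\cE_A^a(\varphi,\chi\varphi).
\]
Then apply the sector bound with $f=\varphi$ and $g=\chi\varphi-\bar\chi\varphi$ for a constant $\bar\chi$, using $\cE_A^a(\varphi,\varphi)=0$. The remaining obstruction is that $\|(\chi-\bar\chi)\varphi\|$ is not small globally. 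That is exactly why localisation to cubes of side $r\sim\|\varphi\|_2/\|\nabla\varphi\|_2$ seems needed, with the $\nabla\eta_k$ cross terms controlled by the same sector bound applied to $f=\eta_k\varphi$, using $\sum\|\nabla(\eta_k\varphi)\|_2^2\lesssim\|\nabla\varphi\|_2^2+r^{-2}\|\varphi\|_2^2$.

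The main obstacle I expect is precisely this localisation. It requires that the sector inequality, stated globally, can be applied to the localized pieces, with the bounded-overlap sums controlled by Cauchy--Schwarz. It also requires that the $\nabla\eta_k$ errors telescope. They do, because $\sum_k\nabla\eta_k=0$, which lets us replace $c_k$ by $c_k-\chi$, a quantity that is $O(\|\nabla\chi\|_\infty r)$. Optimising $r$ then yields the product $M\|\nabla\varphi\|_2\|\varphi\|_2+b\|\varphi\|_2^2$ with a dimensional constant. The last step is to extend from $C_c^\infty$ to $H^1$ by continuity of the form.
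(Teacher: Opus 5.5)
Your overall plan matches the paper's strategy: localize at scale $r\sim\|\varphi\|_2/\|\nabla\varphi\|_2$, subtract local constants, apply the sector bound to each piece, and optimize $r$. However, the step you flag as the main obstacle is not resolved by the mechanism you propose.

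\textbf{The commutator term.} With a linear partition $\sum_k\eta_k=1$, the error term is $E=\sum_kc_k\cE_A^a(\eta_k\varphi,\varphi)=\sum_kc_k\int\varphi\,A\nabla\eta_k\cdot\nabla\varphi$. Using $\sum_k\nabla\eta_k=0$ to replace $c_k$ by $c_k-\chi$ inside this integral is legitimate. But the resulting pointwise smallness $|c_k-\chi|\le Cr\|\nabla\chi\|_\infty$ cannot be used. The hypothesis controls only the bilinear form on test functions, not $A$ pointwise (think of $A\in{\rm BMO}$). In fact $\sum_k(c_k-\chi)\nabla\eta_k=\nabla\tilde\chi$ with $\tilde\chi:=\sum_kc_k\eta_k$. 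So $E=\cE_A^a(\tilde\chi\varphi,\varphi)$ is the original quantity with $\chi$ replaced by a function of comparable Lipschitz norm. This is the circularity you noticed, not a telescoping.

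If you instead move the weight into an argument of the form, so as to apply the sector bound with $f=\eta_k\varphi$, the replacement is no longer exact:
\[
E=\sum_k\cE_A^a\bigl(\eta_k\varphi,(c_k-\chi)\varphi\bigr)-\cE_A^a(\chi\varphi,\varphi),
\]
so the quantity you are estimating reappears.

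\textbf{A smaller issue in the main term.} In $\sum_k\cE_A^a((\chi-c_k)\eta_k\varphi,\varphi)$ you keep the full $\varphi$ in the second slot. The sector bound then produces the $\ell^1$ sum $\sum_k\|\nabla((\chi-c_k)\eta_k\varphi)\|_2$, which is not controlled by $\|\varphi\|_2$ and $\|\nabla\varphi\|_2$. Both arguments must be localized, using locality of $\int A\nabla f\cdot\nabla g$, before Cauchy--Schwarz applies.

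\textbf{The missing idea.} The paper uses a \emph{quadratic} partition $\sum_Q\theta_Q^2=1$ with symmetric localization $\varphi_Q=\theta_Q\varphi$. Since $\sum_Q\theta_Q\nabla\theta_Q=0$, one has $\sum_Q\varphi_Q\nabla\varphi_Q=\varphi\nabla\varphi$. Since also $\cE_A^a(\varphi_Q,\varphi_Q)=0$, this gives the exact identity $\cE_A^a(\chi\varphi,\varphi)=\sum_Q\cE_A^a((\chi-c_Q)\varphi_Q,\varphi_Q)$, with no commutator term at all. Each summand is bounded by the sector condition, with the small factor $\chi-c_Q$ sitting inside a localized argument. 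The partition estimate $\sum_Q\|\nabla\varphi_Q\|_2^2\le\|\nabla\varphi\|_2^2+Cs^{-2}\|\varphi\|_2^2$ then closes the argument.

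Alternatively, your linear-partition route can be repaired by adding the identity above to $\cE_A^a(\chi\varphi,\varphi)=\sum_k\cE_A^a((\chi-c_k)\eta_k\varphi,\varphi)+E$. This gives
\[
2\cE_A^a(\chi\varphi,\varphi)=\sum_k\Bigl[\cE_A^a\bigl((\chi-c_k)\eta_k\varphi,\varphi\bigr)+\cE_A^a\bigl(\eta_k\varphi,(c_k-\chi)\varphi\bigr)\Bigr],
\]
so the regenerated term is absorbed on the left. After replacing $\varphi$ by $\hat\eta_k\varphi$ in the unlocalized slot, where $\hat\eta_k=1$ near ${\rm supp}\,\eta_k$, every summand carries $\chi-c_k$ in a localized argument. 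In either version, $(\chi-c_Q)\varphi_Q$ is only Lipschitz, so the sector bound must be applied after smoothing $\chi$ on ${\rm supp}\,\varphi$.
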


\begin{proof}
First take $\varphi\in C_c^\infty$. Fix $s>0$ and choose a smooth quadratic partition of unity $(\theta_Q)_Q$ at scale $s$ such that
\[
\sum_Q\theta_Q^2=1,\quad
\sum_Q|\nabla\theta_Q|^2\leq Cs^{-2},
\]
and each support has diameter at most $Cs$. Put $\varphi_Q=\theta_Q\varphi$ and choose $c_Q$ so that
\begin{equation*}
|\chi-c_Q|\leq Cs\|\nabla\chi\|_\infty
\quad\hbox{on }{\rm supp}\theta_Q.
\end{equation*}
Skew symmetry and $\sum_Q\theta_Q\nabla\theta_Q=0$ imply
\[
\cE_A^a(\chi\varphi,\varphi)
=\sum_Q\cE_A^a((\chi-c_Q)\varphi_Q,\varphi_Q).
\]
The assumed bound yields that
\[
\begin{aligned}
&|\cE_A^a((\chi-c_Q)\varphi_Q,\varphi_Q)|\\
\leq & C\|\nabla\chi\|_\infty
\bigl(s\|\nabla\varphi_Q\|_2+\|\varphi_Q\|_2\bigr)
\bigl(M\|\nabla\varphi_Q\|_2+b\|\varphi_Q\|_2\bigr).
\end{aligned}
\]
By the partition identities and Cauchy--Schwarz, we have
\[
\begin{aligned}
&\sum_Q\|\nabla\varphi_Q\|_2^2 \leq\|\nabla\varphi\|_2^2+Cs^{-2}\|\varphi\|_2^2, \quad
\sum_Q\|\varphi_Q\|_2^2=\|\varphi\|_2^2,\\
&\sum_Q\|\nabla\varphi_Q\|_2\|\varphi_Q\|_2
\leq\|\nabla\varphi\|_2\|\varphi\|_2+Cs^{-1}\|\varphi\|_2^2.
\end{aligned}
\]
Therefore, 
\[
\begin{aligned}
2|\cE_A^a(\chi\varphi,\varphi)|
= & 2\sum_Q\cE_A^a((\chi-c_Q)\varphi_Q,\varphi_Q)\\
\leq & C\|\nabla\chi\|_\infty
\Bigl[M\bigl(s\|\nabla\varphi\|_2^2
+\|\nabla\varphi\|_2\|\varphi\|_2+s^{-1}\|\varphi\|_2^2\bigr)\\
&+b\bigl(s\|\nabla\varphi\|_2\|\varphi\|_2
+\|\varphi\|_2^2\bigr)\Bigr].
\end{aligned}
\]
Taking $s=\|\varphi\|_2/\|\nabla\varphi\|_2$ proves \eqref{eq:cutoff}; if either norm vanishes, then $\varphi=0$. Smooth approximation of $\chi$ on the support of $\varphi$ justifies the identity for Lipschitz $\chi$. Finally, density in $H^1$ proves the stated extension.
\end{proof}

For $r>0$ and $z\in\mR^d$, set
\[
\gamma_{r,z}(\xi)
:=(4\pi r)^{-d/2}
\exp\Big(-\frac{|\xi-z|^2}{4r}\Big).
\]
The next estimate controls the antisymmetric term when the logarithmic heat equation is averaged against this Gaussian.

\begin{lemma}\label{lem:gauss}
Under the sector bound in \eqref{eq:sector}, the map $F\mapsto\cE^a_t(F,\gamma_{r,z})$, initially defined on $C_c^\infty(\mR^d)$, has a unique continuous extension to every $F\in H^1(\gamma_{r,z})$, where this space consists of the functions satisfying
\[
\int\bigl(|F|^2+|\nabla F|^2\bigr)\gamma_{r,z}<\infty.
\]
Moreover, 
\begin{equation}\label{eq:gform}
|\cE^a_t(F,\gamma_{r,z})|
\leq C\bigl(Mr^{-1/2}+m(t)\bigr)
\Big(\int|\nabla F|^2\gamma_{r,z}\Big)^{1/2}.
\end{equation}
\end{lemma}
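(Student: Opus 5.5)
The natural approach is to move the Gaussian weight into the second slot with a square-root factorization and then use the sector condition with $f=F\sqrt{\gamma}$-type test functions. Write $\gamma=\gamma_{r,z}$ and $\psi:=\gamma^{1/2}$. Since $\gamma=\psi\cdot\psi$, the plan is to express $\cE^a_t(F,\gamma)$ via forms involving $F\psi$ and $\psi$. Formally, with $U$ skew,
\[
\int U\nabla F\cdot\nabla(\psi^2)=2\int U\nabla F\cdot \psi\nabla\psi
=2\int U\nabla(F\psi)\cdot\nabla\psi-2\int F\,U\nabla\psi\cdot\nabla\psi ,
\]
and the last term vanishes pointwise by skew symmetry. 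This gives the identity $\cE^a_t(F,\gamma)=2\,\cE^a_t(F\psi,\psi)$ for $F\in C_c^\infty$. The product rule here is legitimate since $\psi$ is smooth. For rough $U$ one argues on smooth approximations, or directly with the form, using that the form is bilinear and $\int U\nabla h\cdot\nabla h=0$.

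The sector condition \eqref{eq:sector}, with $f=\psi$ and $g=F\psi$ (using the interchanged version), then gives
\[
|\cE^a_t(F\psi,\psi)|\le\|\nabla\psi\|_2\bigl(M\|\nabla(F\psi)\|_2+m(t)\|F\psi\|_2\bigr).
\]
Two inputs are needed next.

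\emph{Scale of $\psi$.} A direct Gaussian computation gives $\|\nabla\psi\|_2^2=\int|\xi-z|^2\gamma/(16r^2)=d/(8r)$. This is exactly what produces the factor $r^{-1/2}$.

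\emph{Controlling $F\psi$ by $\nabla F$ alone.} The bound \eqref{eq:gform} has only $\int|\nabla F|^2\gamma$ on the right, so a naive estimate is not enough: $\|\nabla(F\psi)\|_2\le\|\psi\nabla F\|_2+\|F\nabla\psi\|_2$, and $\|F\psi\|_2$ also appears. The fix is that the left side is unchanged when a constant is subtracted from $F$, since $\cE^a_t(1,\gamma)=0$ (the gradient of a constant vanishes). Replace $F$ by $F-c$ with $c=\int F\gamma$. The Gaussian Poincaré inequality for $\gamma_{r,z}$, whose covariance is $2r\,I$, gives $\int|F-c|^2\gamma\le 2r\int|\nabla F|^2\gamma$. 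For the weighted term, $|\nabla\psi|^2=\frac{|\xi-z|^2}{16r^2}\gamma$, and the Gaussian bound $\int G^2|\xi-z|^2\gamma\le C\bigl(r\int G^2\gamma+r^2\int|\nabla G|^2\gamma\bigr)$ (obtained by integrating by parts against $\nabla\gamma$) shows $\|(F-c)\nabla\psi\|_2^2\le C\int|\nabla F|^2\gamma$. Combining,
\[
|\cE^a_t(F,\gamma)|\le C r^{-1/2}\bigl(M(\|\nabla F\|_{L^2(\gamma)}+C\|\nabla F\|_{L^2(\gamma)})+m(t)\,r^{1/2}\|\nabla F\|_{L^2(\gamma)}\bigr),
\]
which is \eqref{eq:gform}.

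\emph{Extension.} The right side of \eqref{eq:gform} is a seminorm on $H^1(\gamma)$. Since $C_c^\infty$ is dense in $H^1(\gamma)$ (by truncation and mollification, the Gaussian weight being smooth and positive), the linear map extends uniquely and continuously, and the bound passes to the limit.

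The main obstacle is rigor for rough $U$, where the form is defined only through the $H^1$ extension. One must check that $F\psi\in H^1(\mR^d)$ for $F\in C_c^\infty$, which is clear, and that the identity $\cE^a_t(F,\psi^2)=2\cE^a_t(F\psi,\psi)$ holds at the form level. I would verify it first for smooth compactly supported $U$, or for truncated $\psi$, and then pass to the limit using the continuity of the form on $H^1$ guaranteed by \eqref{eq:sector}. The weighted Gaussian moment inequality is routine.
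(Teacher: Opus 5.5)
Your proposal is correct and is essentially the paper's proof. Both arguments factor the Gaussian, use skew symmetry to remove the term containing $U\nabla\phi\cdot\nabla\phi$, subtract the Gaussian mean of $F$, apply \eqref{eq:sector} (extended to $H^1$), close with the Gaussian Poincar\'e inequality, and extend by density in $H^1(\gamma_{r,z})$. The only difference is that the paper splits $\gamma=\gamma^{1/4}\cdot\gamma^{3/4}$ rather than $\gamma^{1/2}\cdot\gamma^{1/2}$, so the leftover factor $\gamma^{1/2}$ absorbs $|\nabla\log\gamma|^2$ in sup norm and plain Poincar\'e suffices, whereas your symmetric split needs the extra (valid) weighted bound $\int G^2|\xi-z|^2\gamma\leq C\bigl(r\int G^2\gamma+r^2\int|\nabla G|^2\gamma\bigr)$.
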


\begin{proof}
Put $\gamma:=\gamma_{r,z}$. For $F\in C_c^\infty$, write $\bar F:=F-\int F\gamma$. 
By the product rule, we have
\[
\nabla\gamma^{1/4}=\tfrac14\gamma^{1/4}\nabla\log\gamma,
\quad
\nabla(\gamma^{3/4}\bar F)
=\gamma^{3/4}\nabla F+\tfrac34\gamma^{3/4}\bar F\nabla\log\gamma.
\]
Since $U\nabla\log\gamma\cdot\nabla\log\gamma=0$, we have
\[
\cE_t^a(\gamma^{1/4},\gamma^{3/4}\bar F)
=\tfrac14\cE_t^a(\gamma,F)
=-\tfrac14\cE_t^a(F,\gamma).
\]
Thus, \eqref{eq:sector} implies
\begin{align}\label{eq:GFE1}
|\cE_t^a(F,\gamma)|
\leq4\|\nabla\gamma^{1/4}\|_2
\bigl(M\|\nabla(\gamma^{3/4}\bar F)\|_2
+m(t)\|\gamma^{3/4}\bar F\|_2\bigr).
\end{align}

The identity $\nabla\log\gamma(\xi)=-(\xi-z)/(2r)$ and the change of variables $w=(\xi-z)/\sqrt r$ show that
\[
\|\nabla\gamma^{1/4}\|_2^2\leq Cr^{d/4-1},\quad
\|\gamma^{1/2}\|_\infty\leq Cr^{-d/4},\quad
\bigl\||\nabla\log\gamma|^2\gamma^{1/2}\bigr\|_\infty
\leq Cr^{-d/4-1}.
\]
By Gaussian Poincar\'e inequality, we have 
\begin{equation}\label{eq:gaussian-poincare}
\int|\bar F|^2\gamma\leq2r\int|\nabla F|^2\gamma. 
\end{equation}
Using the product rule above, we obtain
\[
\begin{aligned}
\|\nabla(\gamma^{3/4}\bar F)\|_2^2
&\leq2\int\gamma^{3/2}|\nabla F|^2
+\frac98\int\gamma^{3/2}|\bar F|^2|\nabla\log\gamma|^2
\leq Cr^{-d/4}\int|\nabla F|^2\gamma,\\
\|\gamma^{3/4}\bar F\|_2^2
&\leq Cr^{-d/4}\int|\bar F|^2\gamma
\leq Cr^{1-d/4}\int|\nabla F|^2\gamma.
\end{aligned}
\]
This together with \eqref{eq:GFE1} proves \eqref{eq:gform}. Finally, $C_c^\infty$ is dense in $H^1(\gamma)$, and spatial cutoffs converge in this norm. Hence the estimate extends to $H^1(\gamma)$, and the extension agrees with the stated cutoff limit.
\end{proof}

The next moment bound keeps a fixed fraction of the kernel mass within a ball of radius proportional to $\sqrt h$, providing the input for the logarithmic estimates. 

\begin{lemma}\label{lem:mass}
Under the assumptions of Theorem \ref{thm:lower}, there is $C$ such that, for $0\leq s<s+h\leq T$,
\begin{equation}\label{eq:mass}
\int|z-x|p_{s,s+h}(x,z)\,\d z\leq C\sqrt h.
\end{equation}
The same estimate holds for the time-reversed adjoint kernel. Here $C$ depends only on $d,M,N$.
\end{lemma}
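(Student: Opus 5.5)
The plan is to test the forward equation against a smoothed distance function and use Lemma \ref{lem:cutoff} for the drift term. Fix $s,x$ and write $\rho_\tau(z):=p_{s,s+\tau}(x,z)$ for $0<\tau\leq h$. This is a probability density: mass is preserved since $\div u=0$. It solves the forward equation $\partial_\tau\rho=\Delta\rho-u\cdot\nabla\rho$. In weak form, for a test function $\phi$,
\[
\frac{\d}{\d\tau}\int\phi\rho_\tau=-\cE^s(\phi,\rho_\tau)+\cE^a_{s+\tau}(\phi,\rho_\tau)
\]
up to the sign convention. The choice $\phi(z)=\langle z-x\rangle:=(1+|z-x|^2/h)^{1/2}\sqrt h$ gives $\|\nabla\phi\|_\infty\leq1$ and $|\Delta\phi|\leq C/\sqrt h$. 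The symmetric part therefore contributes at most $C/\sqrt h$ to the time derivative, which integrates to $C\sqrt h$ over $[0,h]$. Since $\phi$ is unbounded, I would truncate to $\phi_R=\min(\phi,R)$ (smoothly) and let $R\to\infty$ at the end by monotone convergence. All bounds below depend only on $\|\nabla\phi\|_\infty$, and $U\in C_b^\infty$ justifies the computations for each fixed $R$.

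The antisymmetric term is $\cE^a(\phi,\rho)$, which is not directly of the form in Lemma \ref{lem:cutoff}. I would write $\rho=\varphi^2$ with $\varphi=\sqrt{\rho_\tau}$. Then $\cE^a(\phi,\varphi^2)=2\int U\nabla\phi\cdot\varphi\nabla\varphi$. The identity $\nabla(\phi\varphi)=\phi\nabla\varphi+\varphi\nabla\phi$ and skew-symmetry ($U\nabla\varphi\cdot\nabla\varphi=0$) give
\[
\cE^a(\phi\varphi,\varphi)=\int U\varphi\nabla\phi\cdot\nabla\varphi,
\]
so $\cE^a(\phi,\varphi^2)=2\cE^a(\phi\varphi,\varphi)$. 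Lemma \ref{lem:cutoff}, with $\|\varphi\|_2=1$, then bounds this by $C\bigl(M\|\nabla\sqrt{\rho_\tau}\|_2+m(s+\tau)\bigr)$. Combining the two contributions,
\[
\int\phi\rho_h-\phi(x)\leq C\sqrt h+C M\int_0^h\|\nabla\sqrt{\rho_\tau}\|_2\,\d\tau+C\int_0^h m.
\]
For the last term, Cauchy--Schwarz gives $\int_0^h m\leq N\sqrt h$.

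The main obstacle is controlling $\int_0^h\|\nabla\sqrt{\rho_\tau}\|_2\,\d\tau$, i.e. the Fisher information, since $\rho_0=\delta_x$. I would use the entropy identity, in which the drift drops out by divergence-freeness:
\[
\frac{\d}{\d\tau}\int\rho\log\rho=-4\|\nabla\sqrt\rho\|_2^2.
\]
Integrating over $[\tau_1,h]$ and using dyadic pieces $[2^{-k-1}h,2^{-k}h]$ requires two-sided entropy bounds.
\begin{itemize}
\item Upper bound: Nash's inequality gives $\|\rho_\tau\|_\infty\leq C\tau^{-d/2}$, because the drift term vanishes in the $L^2$ energy identity. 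Hence $\int\rho_\tau\log\rho_\tau\leq -\tfrac d2\log\tau+C$.
\item Lower bound: the standard inequality $\int\rho\log\rho\geq-C-\tfrac d2\log(\text{second-type moment})$, here with the first moment, gives $\int\rho_\tau\log\rho_\tau\geq-d\log\bigl(\int|z-x|\rho_\tau\bigr)-C$.
\end{itemize}
On each dyadic piece, Cauchy--Schwarz gives
\[
\int\|\nabla\sqrt\rho\|\leq\sqrt{2^{-k}h}\,\bigl(\text{entropy drop}\bigr)^{1/2}.
\]
Let $\Phi(h):=\sup_{\tau\leq h}\tau^{-1/2}\int|z-x|\rho_\tau$, which is finite for smooth $U$ (a priori with a bad constant). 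The entropy drop on the $k$-th piece is at most $C+d\log^+\Phi(h)$. Summing the geometric series yields $\Phi(h)\leq C+CM\sqrt{\log^+\Phi(h)+1}+CN$. This closes to $\Phi\leq C(d,M,N)$, and since the scheme is uniform in $s$ we may take the supremum over $s$ as well.

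The adjoint kernel satisfies the same equation with $u$ replaced by $-u$. This amounts to replacing $U$ by $-U$ and reversing time, and it satisfies the same sector condition, so the identical argument applies.
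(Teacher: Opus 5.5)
Your argument is correct. It uses the same ingredients as the paper: testing the forward equation against a Lipschitz distance, writing the antisymmetric term as $2\cE^a(\phi\sqrt\rho,\sqrt\rho)$ so that Lemma \ref{lem:cutoff} bounds it by $C(M\|\nabla\sqrt\rho\|_2+m)$, the identity $H'=-4I$, the Nash upper bound on $H$, and a first-moment lower bound on $H$ through an exponential density.

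The two proofs differ in how they break the circularity, namely that the entropy lower bound depends on the very moment being estimated.

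\begin{itemize}
\item The paper fixes the exponential density at scale $\sqrt t$, so the moment enters linearly, $H(t)\geq -\log Z_d-\tfrac d2\log t-M(t)/\sqrt t$. It then works with the single functional $F=2M+\sqrt t\,(H+\tfrac d2\log t+\log Z_d)$. In $F'$, the term $-4\sqrt t\,I$ absorbs $C\sqrt I$ pointwise in time, which gives $F'\leq C/\sqrt t+Cm$ directly. This needs no bootstrap and uses only the qualitative fact $M(t)\to0$ as $t\downarrow0$.
\item You optimize the scale, which gives $H\geq -d\log M-C$. You then bound $\int\sqrt I$ on dyadic blocks by Cauchy--Schwarz and close with $\Phi\leq C+CM\sqrt{1+\log^+\Phi}$. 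This requires a priori finiteness of $\Phi$, which you correctly obtain from boundedness of $\div U$ for $U\in C_b^\infty$, together with the sublinearity of $\sqrt{\log}$. In exchange, it makes visible that the drift costs only a logarithmic factor inside a square root.
\end{itemize}

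One small technical point. You handle the symmetric term through $\int\Delta\phi\,\rho\leq d/\sqrt h$ rather than through $2\|\nabla\sqrt\rho\|_2$. That is fine, but then your remark that all bounds depend only on $\|\nabla\phi\|_\infty$ is not accurate. A smooth truncation of $\phi$ at level $R$ must be taken concave, i.e.\ $\theta_R(\phi)$ with $0\leq\theta_R'\leq1$ and $\theta_R''\leq0$, so that the one-sided bound $\Delta\theta_R(\phi)\leq d/\sqrt h$ survives. Alternatively, you can treat the symmetric term exactly as the paper does.
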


\begin{proof}
Fix $s,x$ and write $k_t(z):=p_{s,s+t}(x,z)$. Since the drift is divergence-free, the forward equation and its adjoint preserve mass, and their $L^2$ energy identities contain only the Dirichlet energy. The classic Nash inequality therefore implies
\begin{equation}\label{eq:nash}
\|k_t\|_\infty\leq Ct^{-d/2},
\end{equation}
see for instance \cite{carlen1987upper}. 

Let $M(t):=\int|z-x|k_t(z)\,\d z=W_1(k_t, \delta_x)$. To estimate $M(t)$, let $\chi$ be a bounded smooth $1$-Lipschitz approximation to $|z-x|$, with $\chi(x)=0$. Testing the forward equation $\partial_t k_t=\div((I-U(s+t))\nabla k_t)$ against $\chi$, and using \eqref{eq:cutoff}, we obtain 
\begin{equation*}
\begin{aligned}
\frac{\d}{\d t}\int\chi k_t
&=-\int(I-U(s+t))\nabla k_t\cdot\nabla\chi\\
&=-2\int\sqrt{k_t}\,\nabla\sqrt{k_t}\cdot\nabla\chi
-2\cE_{s+t}^a(\chi\sqrt{k_t},\sqrt{k_t})\\
&\leq C \Big( \int|\nabla\sqrt{k_t}|^2\Big)^{\frac{1}{2}}+Cm(s+t),
\end{aligned}
\end{equation*}
where we used the fact that 
\[
2\cE_{s+t}^a(\chi\sqrt{k_t},\sqrt{k_t})
=2\int\sqrt{k_t}\,U(s+t)\nabla\chi\cdot\nabla\sqrt{k_t}
=\int U(s+t)\nabla\chi\cdot\nabla k_t.
\]
Define the entropy and Fisher information as 
\[
H(t):=\int k_t\log k_t, \quad I(t):= \frac{1}{4}\int \frac{|\nabla k_t|^2}{k_t}=\int|\nabla\sqrt{k_t}|^2.
\]
Letting $\chi\uparrow|z-x|$, we get 
\begin{align}\label{eq:d_moment}
  M'(t)\leq C\sqrt{I(t)}+Cm(s+t).
\end{align}
The entropy dissipation identity is
\begin{equation}\label{eq:entropy}
H'(t)=-\int \nabla k_t \cdot \nabla (\log k_t) = -4I(t).
\end{equation}

Below we want to get the moment estimate via entropy. First, Nash's bound \eqref{eq:nash} yields 
\begin{align}\label{eq:H_upper}
  H(t)+(d/2)\log t\leq C. 
\end{align}
For the lower bound of $H(t)$, use the probability density $q_t(z):=Z_d^{-1}t^{-d/2}\exp(-|z-x|/\sqrt t)$, where $Z_d:=\int\exp(-|z|)\,\d z$. By Jensen's inequality, we obtain
\begin{align}\label{eq:H_lower}
  H(t)=\int k_t\log q_t - \int k_t\log \frac{q_t}{k_t} \geq\int k_t\log q_t
=-\log Z_d-\frac d2\log t-\frac{M(t)}{\sqrt t}.
\end{align}
Put
\[
F(t):=2M(t)+\sqrt t\Bigl(H(t)+\frac d2\log t+\log Z_d\Bigr).
\]
Then the two entropy bounds \eqref{eq:H_upper} and \eqref{eq:H_lower} imply
\[
M(t)\leq F(t)\leq2M(t)+C\sqrt t.
\]
By \eqref{eq:d_moment}, \eqref{eq:entropy} and \eqref{eq:H_upper}, we get 
\[
\begin{aligned}
F'(t)
&=2M'(t)-4\sqrt t\,I(t)
+\frac{H(t)+(d/2)\log t+\log Z_d+d}{2\sqrt t}\\
&\leq C\sqrt{I(t)}-4\sqrt t\,I(t)
+\frac{C}{\sqrt t}+Cm(s+t)\leq\frac{C}{\sqrt t}+Cm(s+t).
\end{aligned}
\]
Therefore, 
\[
\begin{aligned}
M(h)\leq F(h)\leq C\sqrt h+C\int_0^h m(s+t)\,\d t\leq C\sqrt h,
\end{aligned}
\]
where $C$ depends only on $d,M,N$. This proves \eqref{eq:mass}.

Reversing time and replacing $U$ by $-U$ leaves the hypotheses unchanged, so the same proof applies to the adjoint kernel.
\end{proof}

The moving Gaussian argument needs initial and terminal bounds for the averaged logarithm of the kernel; the next lemma provides these from the preceding estimates.

\begin{lemma}\label{lem:endpoints}
Under the assumptions of Theorem \ref{thm:lower}, there is $C=C(d,M,N)$ such that, for $0\leq s<s+h\leq T$ and $x,y\in\mR^d$,
\begin{equation}\label{eq:anchor}
\begin{aligned}
\int\log p_{s,s+h/8}(x,z)\gamma_{h,x}(z)\,\d z
&\geq-\frac d2\log h-C,\\
\int\log p_{s+7h/8,s+h}(z,y)\gamma_{h,y}(z)\,\d z
&\geq-\frac d2\log h-C.
\end{aligned}
\end{equation}
\end{lemma}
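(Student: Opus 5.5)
We prove the first inequality in \eqref{eq:anchor}; the second follows by applying the same argument to the time-reversed adjoint kernel with $U$ replaced by $-U$. Lemma \ref{lem:mass} already covers that kernel, and Nash's bound \eqref{eq:nash} holds for it as well. Fix $s,x$ and write $k(z):=p_{s,s+h/8}(x,z)$, a probability density. The plan is to lower-bound $\int\log k\,\gamma_{h,x}$ using three ingredients: the entropy of $k$, the fact that $\gamma_{h,x}$ is bounded below on the ball $B:=B(x,R\sqrt h)$, and the fact that $\log k$ is bounded above everywhere by Nash.

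\emph{Step 1: mass and entropy in a ball.} By Lemma \ref{lem:mass} and Chebyshev, $\int_{B^c}k\leq C\sqrt{h/8}/(R\sqrt h)\leq 1/2$ once $R=R(d,M,N)$ is large. Hence $k(B)\geq1/2$.

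Next we control $\int_B k\log k$ from below. From the proof of Lemma \ref{lem:mass}, namely \eqref{eq:H_lower} combined with $M(h/8)\leq C\sqrt h$, the entropy satisfies $H(h/8)\geq-\frac d2\log h-C$. The Nash bound \eqref{eq:nash} gives $\log k\leq-\frac d2\log h+C$ pointwise, so
\[
\int_B k\log k=H-\int_{B^c}k\log k\geq -\tfrac d2\log h -C .
\]
This alone is not what we need, because the target involves the Gaussian average $\int\log k\,\gamma$ rather than $\int k\log k$. The difficulty is that $\log k$ may be very negative on a set of small $k$-mass, and the entropy does not see such a set.

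\emph{Step 2: a Harnack/positivity-type input (the main obstacle).} I would get a pointwise lower bound on $\log k$ via a Moser/Nash-type argument for $\log k$ itself. This is Nash's classical "$G$-functional" idea: set
\[
G(t):=\int\log p_{s,s+t}(x,z)\,\gamma_{h,x}(z)\,\d z,
\]
and differentiate in $t$ using the forward equation $\partial_tk=\div((I-U)\nabla k)$. This gives
\[
G'=\int\gamma|\nabla\log k|^2-\int\nabla\gamma\cdot\nabla\log k+\cE^a(\log k,\gamma)
\]
(up to signs). Lemma \ref{lem:gauss} bounds the antisymmetric term by
\[
C\bigl(Mh^{-1/2}+m\bigr)\Big(\int|\nabla\log k|^2\gamma\Big)^{1/2}.
\]
The drift term $\int\nabla\gamma\cdot\nabla\log k$ is similarly bounded by $Ch^{-1/2}(\int|\nabla\log k|^2\gamma)^{1/2}$. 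By the Gaussian Poincaré inequality \eqref{eq:gaussian-poincare}, $\int|\nabla\log k|^2\gamma\geq (2h)^{-1}\int|\log k-G|^2\gamma$.

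This yields the differential inequality
\[
G'\geq \frac{c}{h}\int|\log k-G|^2\gamma-C\bigl(h^{-1}+m^2\bigr),
\]
after absorbing the cross terms by Young's inequality. Nash's argument then uses Step 1: $k\geq$ something on a set $E\subset B$ of $\gamma$-measure bounded below, with $\log k\geq -\frac d2\log h-C$ there. Such a set exists because $k(B)\geq1/2$ and $k\leq Ch^{-d/2}$. Consequently, whenever $G\ll-\frac d2\log h-C$, we get
\[
\int|\log k-G|^2\gamma\geq c\,(G+\tfrac d2\log h-C)^2 .
\]
Thus $G'\geq \frac ch(G+\frac d2\log h)^2-C(h^{-1}+m^2)$, a Riccati inequality forcing $G$ up over a time interval of length of order $h$.

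The step needing care is that $\log k$ is initially singular ($k$ is close to $\delta_x$ as $t\to0$). So I would run the Riccati comparison on $t\in[h/16,h/8]$, where $G(h/16)>-\infty$ because $k>0$ is smooth. The Riccati blow-up structure gives a bound independent of the initial value: starting from any value, $G$ exceeds $-\frac d2\log h-C$ within time $Ch/c$. Applying this to the increment window also requires $\int m^2\leq N^2$, which is harmless. Choosing constants so that this time is at most $h/16$ gives $G(h/8)\geq-\frac d2\log h-C$ with $C=C(d,M,N)$, which is the first inequality of \eqref{eq:anchor}.
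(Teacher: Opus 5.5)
Your Step 2 is essentially the paper's proof, so the proposal is correct. The paper rescales to unit time and uses $\gamma_{1,0}$, but it then runs exactly this Nash $G$-functional argument on $[1/16,1/8]$. Its ingredients are the same as yours: Lemma \ref{lem:gauss} for the antisymmetric term, Gaussian Poincar\'e, a set $E_\theta$ of $\gamma$-mass at least $\mu$ built from the Nash and first-moment bounds at every time in the window, and the Riccati inequality $q'\leq-\mu q^2/4$ after the $L^1$ forcing is absorbed into $q$.

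The entropy estimate in your Step 1 is not needed. As stated, $\int_B k\log k\geq-\frac d2\log h-C$ does not follow uniformly in $h$ from $k(B^c)\leq\frac12$; you only get roughly $-\frac d4\log h-C$. Nothing downstream uses it, though. Do note that the set $E$ must be produced at each $t\in[h/16,h/8]$, not only at $t=h/8$; the same bounds give this.
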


\begin{proof}
We only need to prove the first estimate in \eqref{eq:anchor}. Rescale the kernel to unit time and move its starting point to the origin by setting
\[
k_\theta(z):=h^{d/2}p_{s,s+h\theta}(x,x+\sqrt h\,z),
\quad 0<\theta\leq1.
\]
This is a probability density with stream matrix $\widetilde U(\theta,z):=U(s+h\theta,x+\sqrt h\,z)$. The rescaled form satisfies \eqref{eq:sector} with the same $M$ and with $\widetilde m(\theta):=\sqrt h\,m(s+h\theta)$. In particular,
\[
\int_0^1\widetilde m(\theta)^2\,\d\theta
=\int_s^{s+h}m(t)^2\,\d t\leq N^2.
\]
Write $\gamma:=\gamma_{1,0}$. Changing variables in the Gaussian average, we obtain
\begin{equation}\label{eq:endpoint-rescaling}
\int\log p_{s,s+h/8}(x,z)\gamma_{h,x}(z)\,\d z
=-\frac d2\log h+\int\log k_{1/8}(z)\gamma(z)\,\d z.
\end{equation}
Thus, it remains to bound the last integral from above by a constant $C$ depending only on $d,M,N$.

We work on $1/16\leq\theta\leq1/8$, away from the initial singularity. On this interval, by \eqref{eq:nash} and \eqref{eq:mass}, respectively, we have
\[
0\leq k_\theta\leq A,\qquad \int|z|k_\theta(z)\,\d z\leq C,
\]
where $A\geq1$ and $C$ depend only on $d,M,N$. Choose $R:=\max\{1,4C\}$. Since $k_\theta$ has total mass one,
\[
\int_{B_R(0)^c}k_\theta\leq\frac1R\int|z|k_\theta(z)\,\d z\leq\frac14,
\qquad \int_{B_R(0)}k_\theta\geq\frac34.
\]
Set $b:=(4|B_R(0)|)^{-1}$ and $E_\theta:=\{z\in B_R(0):k_\theta(z)\geq b\}$. Then
\[
A|E_\theta|\geq\int_{E_\theta}k_\theta
\geq\frac34-b|B_R(0)|=\frac12, \quad 
\int_{E_\theta}\gamma 
\geq\mu:=\frac{(4\pi)^{-d/2}\e^{-R^2/4}}{2A}>0.
\]
Define 
\[
G(\theta)=\int \log k_{\theta} \, \gamma,\quad
I(\theta)=\int|\nabla \log k_{\theta}|^2\gamma.
\]
The forward equation $\partial_\theta k_\theta=\div((I-\widetilde U)\nabla k_\theta)$ and skew symmetry imply
\[
\partial_\theta \log k_{\theta}
=\div((I-\widetilde U)\nabla \log k_{\theta})+|\nabla \log k_{\theta}|^2.
\]
Thus, integrating by parts, we obtain
\[
G'(\theta)=I(\theta)-\int\nabla \log k_{\theta}\cdot\nabla\gamma
+\cE^a_{\widetilde U(\theta)}(\log k_{\theta},\gamma).
\]
Since $\nabla\log\gamma=-z/2$ and $\int|\nabla\log\gamma|^2\gamma=d/2$, Cauchy--Schwarz and \eqref{eq:gform} yield
\begin{equation}\label{eq:endpoint-log-differential}
\begin{aligned}
G'(\theta)\geq I(\theta)-\bigl[\sqrt{d/2}+C(M+\widetilde m(\theta))\bigr]\sqrt{I(\theta)} \geq\tfrac12I(\theta)-F(\theta),
\end{aligned}
\end{equation}
where Young's inequality is used in the last step, and
\[
F(\theta):=C\bigl(1+\widetilde m(\theta)^2\bigr),\qquad
\int_{1/16}^{1/8}F(\theta)\,\d\theta\leq C.
\]
Let $L:=\max\{0,-\log b\}$. Since $\log k_\theta\geq-L$ on $E_\theta$ and $\int_{E_\theta}\gamma\geq\mu$, we obtain from \eqref{eq:gaussian-poincare}
\[
2I(\theta)\geq\int|\log k_\theta-G(\theta)|^2\gamma
\geq\mu(-L-G(\theta))_+^2.
\]
Absorb the accumulated error by setting
\[
q(\theta):=\Big(-L-G(\theta)-\int_{1/16}^\theta F(v)\,\d v\Big)_+.
\]
On $\{q>0\}$, we have $q\leq-L-G$, so \eqref{eq:endpoint-log-differential} yields
\[
q'=-G'-F\leq-\tfrac12I\leq-\frac\mu4q^2.
\]
The inequality $q'\leq-\mu q^2/4$ also holds almost everywhere on $\{q=0\}$. Hence $q$ is nonincreasing; integrating $(1/q)'\geq\mu/4$ on intervals where $q>0$, we obtain
\[
q(\theta)\leq\frac4{\mu(\theta-1/16)},\qquad 1/16<\theta\leq1/8.
\]
At $\theta=1/8$, we conclude that
\[
\int\log k_{1/8}(z)\gamma(z)\,\d z
=G(1/8)\geq-L-\frac{64}{\mu}
-\int_{1/16}^{1/8}F(v)\,\d v\geq-C, 
\]
where $C$ depend only on $d,M,N$. So we complete 
\end{proof}

\begin{proof}[Proof of Theorem \ref{thm:lower}]
Fix $0<t<T\wedge \frac{1}{4}$, $x,y\in\mR^d$, and $0<\beta<1/4$. Put
\[
a_\beta:=\frac{\beta t}{2},\quad
b_\beta:=\Big(1-\frac\beta2\Big)t,\quad
I_\beta:=[a_\beta,b_\beta]. 
\]
On $I_\beta$, let the Gaussian center move at constant velocity:
\[
\alpha_\tau:=x+\frac{\tau-a_\beta}{(1-\beta)t}(y-x),\quad
|\alpha'_\tau|=\frac{|x-y|}{(1-\beta)t}.
\]
Choose a varying heat-time parameter
\[
r(\tau):=\min\{8\tau,t,8(t-\tau)\},\quad
\Psi_\tau:=\gamma_{r(\tau),\alpha_\tau}.
\]
Thus, $r(a_\beta)=r(b_\beta)=4\beta t$ and $r(t/2)=t$. The function $r$ is positive and Lipschitz on $I_\beta$, with $|r'|\leq8$. $\Psi_\tau$ satisfies 
\[
\partial_\tau\Psi_\tau
=r'(\tau)\Delta\Psi_\tau-\alpha'_\tau\cdot\nabla\Psi_\tau.
\]

Define
\[
F_\tau(z):=p_{0,\tau}(x,z),\quad
G_\tau(z):=p_{\tau,t}(z,y),
\]
and set
\[
f(\tau):=\int\log F_\tau\,\Psi_\tau,\quad
g(\tau):=\int\log G_\tau\,\Psi_\tau.
\]
Since $u$ is divergence-free, the forward and backward equations are
\[
\partial_\tau F_\tau=\Delta F_\tau-u(\tau)\cdot\nabla F_\tau,
\quad
-\partial_\tau G_\tau=\Delta G_\tau+u(\tau)\cdot\nabla G_\tau.
\]

Put
\[
A_F(\tau)^2:=\int|\nabla\log F_\tau|^2\Psi_\tau,\quad
A_G(\tau)^2:=\int|\nabla\log G_\tau|^2\Psi_\tau.
\]
Differentiating $f$ and $g$, integrating by parts and using the fact that $-\int u(\tau)\cdot\nabla H\,\Psi_\tau=\cE_\tau^a(H,\Psi_\tau)$, we obtain
\begin{equation*}
\begin{aligned}
f'(\tau)
=&A_F(\tau)^2+\cE_\tau^a(\log F_\tau,\Psi_\tau)+\int\bigl[\alpha'_\tau-(1+r'(\tau))\nabla\log\Psi_\tau\bigr]
 \cdot\nabla\log F_\tau\,\Psi_\tau,\\
-g'(\tau)
=&A_G(\tau)^2-\cE_\tau^a(\log G_\tau,\Psi_\tau)-\int\bigl[\alpha'_\tau+(1-r'(\tau))\nabla\log\Psi_\tau\bigr]
 \cdot\nabla\log G_\tau\,\Psi_\tau.
\end{aligned}
\end{equation*}
By Cauchy--Schwarz and Lemma \ref{lem:gauss}, using the fact that $\int|\nabla\log\Psi_\tau|^2\Psi_\tau=d/(2r(\tau))$, one sees that both right-hand sides are bounded below by $A^2-(|\alpha'_\tau|+B(\tau))A$, with
\[
B(\tau):=C\frac{1+|r'(\tau)|}{\sqrt{r(\tau)}}+Cm(\tau).
\]
Completing the square, we obtain
\begin{equation}\label{eq:Dfg}
f'(\tau),\ -g'(\tau)
\geq-\frac14\bigl(|\alpha'_\tau|+B(\tau)\bigr)^2.
\end{equation}
We will integrate this square before estimating its mixed term.

Apply \eqref{eq:anchor} with $h:=4\beta t$, and $s=0$ or $t-h$. Since $a_\beta=t-b_\beta=h/8$, we obtain
\begin{equation}\label{eq:fg_end}
f(a_\beta),\ g(b_\beta)
\geq-\frac d2\log(4\beta t)-C
\geq-\frac d2\log t-C.
\end{equation}
Integrate the first inequality in \eqref{eq:Dfg} from $a_\beta$ to $t/2$, and the second from $t/2$ to $b_\beta$. \eqref{eq:fg_end} yields
\[
f(t/2)+g(t/2)\geq-d\log t-C-\frac{|x-y|^2}{4t(1-\beta)}
-\frac{|x-y|J_1}{2t(1-\beta)}-\frac{J_2}{4},
\]
where 
\[
J_1:=\int_{I_\beta}B(\tau)\,\d\tau,\quad
J_2:=\int_{I_\beta}B(\tau)^2\,\d\tau.
\]
By the definition of $r$, 
\[
\int_{I_\beta}\frac{\d\tau}{\sqrt{r(\tau)}}\leq\frac54\sqrt t,
\quad
\int_{I_\beta}\frac{\d\tau}{r(\tau)}
=\frac34+\frac14\log\frac1{4\beta}.
\]
Since $|r'|\leq8$, $\int_{I_\beta}m\leq\sqrt t\,N$, and $\int_{I_\beta}m^2\leq N^2$, it follows that
\[
J_1\leq C\sqrt t,\quad
J_2\leq C\bigl(1+|\log\beta|\bigr).
\]
Thus, for any $0<\varepsilon\ll 1$
\[
\frac{|x-y|J_1}{2t(1-\beta)}\leq\frac{\varepsilon |x-y|^2}{4t(1-\beta)(1-\varepsilon)}
+\frac{(1-\varepsilon)J_1^2}{4\varepsilon t(1-\beta)} \leq\frac{\varepsilon |x-y|^2}{4t(1-\beta)(1-\varepsilon)}
+\frac{C}\varepsilon.
\]
Consequently,
\begin{equation}\label{eq:fg_mid}
\begin{aligned}
f(t/2)+g(t/2)\geq -d\log t
-\frac{|x-y|^2}{4t(1-\beta)(1-\varepsilon)}-C\bigl(\varepsilon^{-1}+1+|\log\beta|\bigr).
\end{aligned}
\end{equation}

At the midpoint, $\Psi_{t/2}=\gamma_{t,\alpha_{t/2}}$ is a probability density. Using Chapman--Kolmogorov equation and Jensen's inequality, we obtain
\begin{equation}\label{eq:logp}
\begin{aligned}
\log p_{0,t}(x,y)
&=\log\int\frac{F_{t/2}G_{t/2}}{\Psi_{t/2}}\,\Psi_{t/2} \geq f(t/2)+g(t/2)-\int\Psi_{t/2}\log\Psi_{t/2} \\
&=f(t/2)+g(t/2)
+\frac d2\log(4\pi t)+\frac d2.
\end{aligned}
\end{equation}
Here we used the fact that $\log\Psi_{t/2}(z)=-(d/2)\log(4\pi t)-|z-\alpha_{t/2}|^2/(4t)$ and $\int|z-\alpha_{t/2}|^2\Psi_{t/2}(z)\,\d z=2dt$.

Choose $\beta:=\sqrt t/2$ and $\varepsilon:=\sqrt t$. Since $t<1/4$, we have $0<\beta<1/4$ and $0<\varepsilon<1/2$. Moreover,
\[
\begin{aligned}
(1-\beta)(1-\varepsilon)
&=(1-\sqrt t/2)(1-\sqrt t)\geq(1-\sqrt t)^2,\\
\varepsilon^{-1}+1+|\log\beta|
&=t^{-1/2}+1+\log(2/\sqrt t)\leq\frac C{\sqrt t}.
\end{aligned}
\]
Substituting these bounds into \eqref{eq:fg_mid} and \eqref{eq:logp}, and combining $-d\log t$ with $(d/2)\log(4\pi t)$, we obtain
\[
\log p_{0,t}(x,y)\geq-\frac d2\log t
-\frac{|x-y|^2}{4t(1-\sqrt t)^2}-\frac C{\sqrt t}, 
\]
where $C$ depends only on $d,M,N$. So we complete our proof.
\end{proof}

\begin{proof}[Proof of Corollary \ref{cor:bmo-lower}]
The smooth approximations in Appendix \ref{sec:approx} satisfy \eqref{eq:sector} with $m=0$ and a common $M$ by Proposition \ref{prop:bounds}(i). By Theorem \ref{thm:lower}, we therefore obtain \eqref{eq:lower} for their kernels with one constant $C$, depending only on $d,M$. By Theorem \ref{thm:feller}(b), this estimate passes to $p$ at each fixed $0<t<1/4$. Taking logarithms and multiplying by $t$ yields
\[
t\log p_{0,t}(x,y)
\geq-\frac d2t\log t
-\frac{|x-y|^2}{4(1-\sqrt t)^2}-C\sqrt t.
\]
The right-hand side converges to $-|x-y|^2/4$ uniformly for $x,y$ in any fixed compact set, proving the claim.
\end{proof}

\subsection{A time-dependent counterexample}\label{sec:counter}

The lower bound above holds for every bounded skew stream. We now construct one for which the Euclidean upper bound fails. On shrinking time intervals, a moving ball carries the diffusion a fixed distance; in the moving coordinates the process is Brownian motion with generator $\Delta$ until exit. The stream is bounded but need not be smooth in time, and we use the heat kernel constructed in Appendix \ref{sec:approx}.

\begin{proof}[Proof of Theorem \ref{thm:counter}]
Choose $\chi\in C_c^\infty(B_3(0))$ with $\chi=1$ on $B_2(0)$, and define the skew-symmetric matrix $Q$ on $\mR^d$ by
\[
Q_{21}(z):=z_2\chi(z),\quad
Q_{12}(z):=-z_2\chi(z),
\]
with all other entries equal to zero.  Set $q:=\div Q$.  Then
\[
\begin{aligned}
q_1&=\partial_2(z_2\chi)=\chi+z_2\partial_2\chi,\\
q_2&=\partial_1(-z_2\chi)=-z_2\partial_1\chi,
\quad q_j=0\quad(j\geq3).
\end{aligned}
\]
Consequently,
\[
q=e_1\quad\hbox{on }B_2(0),
\quad q\in C_c^\infty(B_3(0);\mR^d).
\]
Moreover, the skew symmetry of $Q$ implies $\div q=\sum_{i,j}\partial_j\partial_iQ_{ij}=0$ in the sense of distributions.

Let $\lambda_1$ be the first Dirichlet eigenvalue of $-\Delta$ on $B_1(0)$ and choose
\[
M=3\sqrt{\lambda_1}.
\]
Starting with a sufficiently small $t_1\in(0,1)$, set
\[
t_{n+1}:=t_n^3,
\quad I_n:=(t_{n+1},t_n],
\quad \delta_n:=t_n-t_{n+1},
\quad r_n:=M\delta_n.
\]
For $t\in[t_{n+1},t_n]$, put
\[
\gamma_n(t):=\frac{t-t_{n+1}}{\delta_n}e_1.
\]
For each $(t,x)\in \mR_+\times \mR^d$, define
\begin{equation}\label{eq:plug}
\begin{aligned}
U(t,x)&:=M \sum_{n=1}^\infty Q\Big(\frac{x-\gamma_n(t)}{r_n}\Big)\1_{I_n}(t),\\
u(t,x)&:=\div U(t,x)=\sum_{n=1}^\infty \delta_n^{-1}q\Big(\frac{x-\gamma_n(t)}{r_n}\Big)\1_{I_n}(t).
\end{aligned}
\end{equation}
For $t\in I_n$, its spatial slice is smooth and supported in $B_{3r_n}(\gamma_n(t))$.  Moreover,
\[
\begin{aligned}
\|u(t)\|_{L^d}^d=\Big(\frac{M}{r_n}\Big)^d
\int
\left|q\Big(\frac{x-\gamma_n(t)}{r_n}\Big)\right|^d\,\d x=M^d\|q\|_{L^d}^d.
\end{aligned}
\]
Thus, $u\in L^\infty_tL^d_x$, and every spatial slice is smooth and compactly supported.  

Moreover, $\|U\|_\infty\leq M\|Q\|_\infty$, so Theorem \ref{thm:feller} ensures the existence of the heat kernel and Markov family used below. On each interval $I_n$, the coefficients admit a smooth bounded extension in time. The classical evolution on that interval agrees with the evolution constructed there by uniqueness of the backward energy equation; the values at the block endpoints do not affect the evolution.

According to \eqref{eq:plug}, on the moving ball $B_{2r_n}(\gamma_n(t))$, we have
\[
u(t,x)=\delta_n^{-1}e_1
=\dot\gamma_n(t), \quad x \in B_{2r_n}(\gamma_n(t)).
\]
The two relevant scale relations are
\begin{equation}\label{eq:scales}
\frac{\delta_n}{r_n^2}
=\frac{1}{M^2\delta_n}\to\infty ~~\mbox{ and }~~
\frac{r_n^2}{t_{n+1}}
=M^2\frac{(1-t_n^2)^2}{t_n}
\to\infty, ~~\mbox{ as } n\to\infty.
\end{equation}
The first determines the confinement cost on the last block; the second makes the earlier time interval negligible on the scale $r_n$. Figure \ref{fig:moving-ball} illustrates the two time intervals.

\begin{figure}[htbp]
\centering
\scalebox{0.9}{%
\begin{tikzpicture}[
  x=1cm,
  y=1cm,
  >=latex,
  every node/.style={font=\small},
  interval/.style={font=\scriptsize,fill=white,inner sep=1.5pt},
  endpoint/.style={font=\scriptsize,align=left}
]
% Coordinates: time is horizontal and only the transport coordinate x_1 is shown.
\def\tzero{0.8}
\def\tb{4.3}
\def\tc{10.2}
\def\yzero{0.8}
\def\yone{4.0}

% Coordinate axes and endpoint levels.
\draw[->,line width=0.7pt] (0.55,0.35)--(10.75,0.35)
  node[right] {$t$};
\draw[->,line width=0.7pt] (\tzero,0.35)--(\tzero,4.75)
  node[above] {$x_1$};
\draw[densely dotted,black!35] (\tzero,\yzero)--(10.45,\yzero);
\draw[densely dotted,black!35] (\tzero,\yone)--(10.45,\yone);
\node[left,font=\scriptsize] at (\tzero,\yzero) {$0$};
\node[left,font=\scriptsize] at (\tzero,\yone) {$1$};

% The projected moving core on the last block I_n.
\path[fill=green!16]
  (\tb,0.44)--(\tc,3.64)--(\tc,4.36)--(\tb,1.16)--cycle;
\draw[green!65,line width=0.55pt]
  (\tb,0.44)--(\tc,3.64)
  (\tb,1.16)--(\tc,4.36);
\draw[orange!80!black,dashed,line width=0.7pt]
  (\tb,\yzero)--(\tc,\yone);

% Earlier evolution from (0,0) to the start of the last block.
\draw[->,blue,line width=1.15pt]
  plot[smooth] coordinates {
    (0.80,0.80) (1.15,1.12) (1.55,0.92) (1.95,1.30)
    (2.35,0.95) (2.80,1.18) (3.25,0.88) (3.70,1.02)
    (4.30,0.80)
  };
% The continuation inside the moving ball on I_n.
\draw[->,red,line width=1.15pt]
  plot[smooth] coordinates {
    (4.30,0.80) (4.90,1.05) (5.50,1.55) (6.10,1.72)
    (6.70,2.20) (7.30,2.35) (7.90,2.84) (8.50,3.00)
    (9.10,3.52) (9.70,3.64) (10.20,4.02)
  };
\path (\tb,\yzero)--(\tc,\yone)
  node[pos=0.57,below=1.5pt,sloped,font=\scriptsize,
    inner sep=0.4pt,text=red] {$\gamma_n(t)$}
  node[pos=0.84,below=1.5pt,sloped,font=\scriptsize,
    inner sep=0.4pt,text=red] {$X_{t_n}\approx e_1$};

% Time labels and the endpoint events.
\foreach \x in {\tzero,\tb,\tc}{
  \draw (\x,0.29)--(\x,0.41);
}
\node[below,font=\scriptsize] at (\tzero,0.29) {$0$};
\node[below,font=\scriptsize] at (\tb,0.29) {$t_{n+1}$};
\node[below,font=\scriptsize] at (\tc,0.29) {$t_n$};
\draw[<->,black!60] (\tzero,4.53)
  --node[interval] {$[0,t_{n+1}]$}(\tb,4.53);
\draw[<->,black!60] (\tb,4.53)
  --node[interval] {$I_n=(t_{n+1},t_n]$}(\tc,4.53);
\fill[blue!80!black] (\tzero,\yzero) circle (1.7pt);
\fill[blue!80!black] (\tb,\yzero) circle (1.7pt);
\fill[red!85!black] (\tc,\yone) circle (1.7pt);
\node[endpoint,right,text=blue!80!black] at (\tzero+0.08,0.58)
  {$X_0=0$};
\node[endpoint,left,text=blue!80!black] at (\tb-0.08,1.42)
  {$X_{t_{n+1}}\in B_{\frac{r_n}{2}}(0)$};
\end{tikzpicture}
}
\caption{The blue part starts from $X_0=0$ and ends inside $B_{\frac{r_n}{2}}(0)$ at time $t_{n+1}$ with probability close to 1.  During $I_n=(t_{n+1},t_n]$, the red continuation depicts a path that stays in the moving ball $B_{r_n}(\gamma_n(t))$ and reaches a neighborhood of $e_1$ at $t_n$.}
\label{fig:moving-ball}
\end{figure}

For each $z\in B_{r_n}(0)$, work under $\mP_{t_{n+1},z}$, so that $X_{t_{n+1}}=z$ almost surely, and set
\[
\sigma_n
:=\inf\{t: t \in [t_{n+1},t_n], ~ X_t\notin B_{r_n}(\gamma_n(t))\}.
\]
We use the convention $\inf\varnothing:=+\infty$; hence $\{\sigma_n>t_n\}$ is exactly the event that the diffusion remains in the moving ball throughout the block $I_n$.

Before $\sigma_n$, the relative process $Y_t:=X_t-\gamma_n(t)$ satisfies
\[
\d Y_t
=\bigl(u(t,X_t)-\dot\gamma_n(t)\bigr)\,\d t
 +\sqrt2\,\d W_t
=\sqrt2\,\d W_t.
\]
So $Y$ is Brownian motion with generator $\Delta$, killed on leaving $B_{r_n}(0)$.  Let $k_s^{B_r(0)}$ denotes the Dirichlet heat kernel of $\Delta$ in $B_r(0)$, then, for every $z\in B_{r_n}(0)$ and every $y\in B_{r_n}(e_1)$, it holds that
\[
\mP_{t_{n+1},z}
\bigl(X_{t_n}\in\d y,\ \sigma_n>t_n\bigr)
=k_{\delta_n}^{B_{r_n}(0)}(z,y-e_1)\,\d y.
\]
By domination of the killed transition measure and continuity of both kernels, for each $z\in B_{\frac{r_n}{2}}(0)$,
\[
p_{t_{n+1},t_n}(z,e_1)
\geq k_{\delta_n}^{B_{r_n}(0)}(z,0)
=r_n^{-d}
k^{B_1(0)}_{\delta_n/r_n^2}(z/r_n,0).
\]

Let $\phi_1$ be the positive first Dirichlet eigenfunction of $-\Delta$ on $B_1(0)$.  By the eigenfunction expansion, we have
\[
\e^{\lambda_1t}k_t^{B_1(0)}(x,0)
\to\phi_1(x)\phi_1(0), \quad t\to\infty, 
\]
uniformly for $x\in B_{1/2}(0)$.  Since $\phi_1$ has a positive lower bound on $B_{1/2}(0)$, \eqref{eq:scales} implies, for all sufficiently large $n$ and all $z\in B_{\frac{r_n}{2}}(0)$,
\begin{align}
p_{t_{n+1},t_n}(z,e_1)
&\geq C^{-1}r_n^{-d}
\exp\Big\{-\lambda_1\frac{\delta_n}{r_n^2}\Big\}
=C^{-1}r_n^{-d}
\exp\Big\{-\frac{\lambda_1}{M^2\delta_n}\Big\}.
\label{eq:plug-lower}
\end{align}

We next show that, before the last block begins, almost all mass is already inside $B_{\frac{r_n}{2}}(0)$.  In fact, the rough Gaussian upper bound \eqref{eq:aronson} implies
\[
\begin{aligned}
\int_{|z|\geq r_n/2}p_{0,t_{n+1}}(0,z)\,\d z
&\leq
Ct_{n+1}^{-d/2}
\int_{|z|\geq r_n/2}
\exp\Big\{-\frac{|z|^2}{Ct_{n+1}}\Big\}\,\d z\\
&\leq
C\exp\Big\{-\frac{r_n^2}{Ct_{n+1}}\Big\}
=o(1),
\end{aligned}
\]
where the last step follows from \eqref{eq:scales}.  By conservativity,
\begin{equation}\label{eq:plug-mass}
\int_{B_{\frac{r_n}{2}}(0)}p_{0,t_{n+1}}(0,z)\,\d z=1-o(1),
\end{equation}
as claimed.

Finally, Chapman--Kolmogorov equation yields that
\[
\begin{aligned}
p_{0,t_n}(0,e_1)
&=\int
p_{0,t_{n+1}}(0,z)
p_{t_{n+1},t_n}(z,e_1)\,\d z\\
&\geq
\int_{B_{\frac{r_n}{2}}(0)}
p_{0,t_{n+1}}(0,z)
p_{t_{n+1},t_n}(z,e_1)\,\d z.
\end{aligned}
\]
Combining \eqref{eq:plug-lower} and \eqref{eq:plug-mass}, we get
\[
p_{0,t_n}(0,e_1)
\geq (1-o(1))C^{-1}r_n^{-d}
\exp\Big\{-\frac{\lambda_1}{M^2\delta_n}\Big\},
\]
which implies
\[
\begin{aligned}
t_n\log p_{0,t_n}(0,e_1)
\geq{}&
-\frac{\lambda_1}{M^2}\frac{t_n}{\delta_n}
-dt_n\log r_n\\
&-t_n\log C+t_n\log(1-o(1)).
\end{aligned}
\]
By definition of $\delta_n$ and $r_n$, we have
\[
\frac{t_n}{\delta_n}=\frac1{1-t_n^2}\to1,
\quad
r_n=Mt_n(1-t_n^2),
\quad
t_n|\log r_n|\to0.
\]
Noting that $M^2=9\lambda_1$, one sees that
\[
\liminf_{n\to\infty}
t_n\log p_{0,t_n}(0,e_1)
\geq-\frac{\lambda_1}{M^2}
=-\frac{1}{9}, 
\]
which proves the failure of the Euclidean upper bound. Here $C$ depends only on $d$ and the fixed cutoff $\chi$, and is independent of $n$.
\end{proof}

\subsection{The upper bound}\label{sec:upper}
We return to smooth bounded skew streams $U\in C_b^\infty$ and their classical kernels, now under the structural decomposition. We combine the exponential-weight method with Nash iteration; see \cite{carlen1987upper} for the symmetric semigroup setting. A sharp weighted $L^2$ estimate on the middle interval and smoothing on two short endpoint intervals lead to the kernel bound. Both the weighted estimate and the endpoint smoothing are obtained directly from energy estimates and Nash's inequality, without using \eqref{eq:aronson}.

For the weighted upper estimate, we construct a bounded, time-independent Lipschitz weight whose increment approximates the endpoint distance and whose drift contribution can be made small.

\begin{lemma}\label{lem:weights}
Suppose that $u=\div U$ satisfies Assumption \ref{ass:decomp} on $(0,T)$. For every $\delta>0$, one can choose $\sigma_\delta=\sigma(d,\delta)>0$ and $K_\delta=K(d,\delta, \|F\|_{L^{d,\infty}})>0$ such that the following holds: for every $x,y\in\mR^d$, there is a bounded real Lipschitz weight $\psi$ such that
\begin{equation}\label{eq:weight}
\|\nabla\psi\|_\infty\leq1,
\quad
\psi(y)-\psi(x)\geq |x-y|-\delta,
\end{equation}
and, for almost every $t\in(0,T)$ and every real $\varphi\in C_c^\infty(\mR^d)$,
\begin{equation}\label{eq:drift}
2\left|\cE_t^a(\psi\varphi,\varphi)\right|
\leq
\delta\|\nabla\varphi\|_2\|\varphi\|_2
+c_\delta(t)\|\varphi\|_2^2, 
\end{equation}
where $c_\delta(t):=K_\delta+\beta_{\sigma_\delta}(t)\in L^1(0,T)$. For a family sharing $F$ and the bounds $\beta_\sigma$, the same $\psi,c_\delta$ may be used for every member at each fixed $\delta,x,y$. 
\end{lemma}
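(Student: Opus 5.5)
The plan is to split $2\cE_t^a(\psi\varphi,\varphi)$ according to the decomposition \eqref{eq:decomp} and treat the three pieces separately. First rewrite the antisymmetric form as a drift integral:
\[
2\cE_t^a(\psi\varphi,\varphi)=\int U\nabla\psi\cdot\nabla(\varphi^2)=-\int (u\cdot\nabla\psi)\varphi^2 ,
\]
up to sign. This uses skew symmetry, exactly as in the proof of Lemma \ref{lem:mass}. With $u=v+a_\sigma+b_\sigma$, the three pieces are then:
\begin{itemize}
\item the $b_\sigma$ piece, bounded by $\|b_\sigma(t)\|_\infty\|\nabla\psi\|_\infty\|\varphi\|_2^2\leq\beta_\sigma(t)\|\varphi\|_2^2$;
\item the $a_\sigma$ piece, where $a_\sigma=\div A_\sigma$ with $\|A_\sigma(t)\|_{\rm BMO}\leq C\sigma$ (by the Hodge/BMO results of Appendix \ref{sec:aux}), so the form $\cE^a_{A_\sigma}$ satisfies the sector bound with $M=C\sigma$ and $b=0$ (via \eqref{eq:hbmo}), and Lemma \ref{lem:cutoff} gives $2|\cE^a_{A_\sigma}(\psi\varphi,\varphi)|\leq C\sigma\|\nabla\varphi\|_2\|\varphi\|_2$;
\item the $v$ piece, which is the only place where the weight $\psi$ must be constructed cleverly.
\end{itemize}
Choosing $\sigma_\delta$ so that $C\sigma_\delta\leq\delta/2$ settles the $a$ part, with $\sigma_\delta$ depending only on $d,\delta$.

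For the $v$ part we only know $|v|\leq F\in L^{d,\infty}$, with no smallness. The idea is to make $\nabla\psi$ vanish where $F$ is large. Fix a level $\lambda$ and let $E_\lambda=\{F>\lambda\}$, so $|E_\lambda|\leq N_F^d\lambda^{-d}$. Invoke Lemma \ref{lem:flatten} to get an exceptional open set $O\supseteq E_\lambda$ (up to null sets), covered by balls whose radii sum to at most $\delta$ along the relevant direction, and a $1$-Lipschitz $\psi$ with $\nabla\psi=0$ a.e. on $O$. The model choice is $\psi(z)=\min\{R,\max\{0,(z-x)\cdot e\}\}$ with $e=(y-x)/|x-y|$, modified by flattening: the projection onto $e$ is made constant across the projected shadows of the balls. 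Along the segment from $x$ to $y$ one then loses at most the total length of the shadows, which is at most $2\sum r_i\leq\delta$. This gives \eqref{eq:weight}, and boundedness comes from the truncation at $R\geq|x-y|$.

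The difficulty is producing a covering whose total radius is small while still covering $E_\lambda$. Measure alone ($|E_\lambda|\lesssim\lambda^{-d}$) only gives $\sum r_i^d$ small, not $\sum r_i$. I expect this to be exactly the content of Lemma \ref{lem:flatten}. One plausible route is to decompose $F$ dyadically, $F\leq\lambda+\sum_k 2^{k}\lambda\1_{E_{2^k\lambda}}$, and to flatten only where $F$ is large relative to a local scale. If necessary, the residual part $F\1_{\{F\leq\lambda\}}$ can be handled by letting the small part of $v$ also contribute a bounded piece; the honest bound is $|\int v\cdot\nabla\psi\,\varphi^2|\leq\lambda\|\varphi\|_2^2$ on $O^c$, since $\nabla\psi=0$ a.e.\ on $O$. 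Setting $K_\delta:=\lambda(d,\delta,N_F)$ then gives
\[
2|\cE^a_t(\psi\varphi,\varphi)|\leq \tfrac{\delta}{2}\|\nabla\varphi\|_2\|\varphi\|_2+\bigl(K_\delta+\beta_{\sigma_\delta}(t)\bigr)\|\varphi\|_2^2 .
\]
The main obstacle is choosing $\lambda$ so that the radius sum is at most $\delta$. I would first try showing that the covering number of $E_\lambda$ at each dyadic scale is controlled by $N_F^d\lambda^{-d}$ times a scale-dependent factor, then sum the geometric series in $\lambda^{-1}$.

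The uniformity statement is then immediate. The weight $\psi$ depends only on $F,x,y,\delta$, and the constants depend only on $d,\delta,N_F$ and on $\beta_\sigma$, so every member of a family sharing $F$ and the bounds $\beta_\sigma$ admits the same $\psi$ and $c_\delta$. Throughout, the test function $\psi\varphi$ must be handled through the $H^1$ extension of the form, as in Lemma \ref{lem:cutoff}.
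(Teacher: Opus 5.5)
The $b_\sigma$ and $a_\sigma$ pieces are handled correctly and as in the paper: Lemma \ref{lem:clms} is exactly Lemma \ref{lem:cutoff} combined with \eqref{eq:hbmo}. Your projected, flattened weight is essentially the paper's $\psi_{e,x,y}$.

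The gap is in the $v$ piece. Your bound $|\int v\cdot\nabla\psi\,\varphi^2|\leq\lambda\|\varphi\|_2^2$ needs $F\leq\lambda$ a.e.\ off $O$. That is, $O\supseteq\{F>\lambda\}$ up to a null set, with $\lambda=K_\delta$ depending only on $d,\delta,N_F$. This requirement is incompatible with \eqref{eq:weight}, so no better dyadic covering can rescue it. Lemma \ref{lem:flatten} does not assert it either; it gives a trace estimate off $O$, not a pointwise bound.

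Here is a counterexample to the mechanism. Given $\lambda$ and $N$, take $x,y$ with $|x-y|>\delta$. Let $S$ be an open cylinder of radius $\varepsilon$ around a segment slightly longer than $[x,y]$, choose $\varepsilon$ so that $2\lambda|S|^{1/d}=N$, and put $F:=2\lambda\1_S$. Your $O$ and $\psi$ are built from $F$ alone.
\begin{itemize}
\item Then $\|F\|_{L^{d,\infty}}=N$, and $\{F>\lambda\}=S$ is connected and contains $x$ and $y$.
\item If $\nabla\psi=0$ a.e.\ on $S$, the Lipschitz function $\psi$ is constant on $S$, so $\psi(y)=\psi(x)$.
\end{itemize}
A set of small measure can have large one-dimensional content, even after projection. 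This is the obstruction you noticed, and it cannot be removed while you insist on flattening a whole superlevel set.

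The missing idea is to replace the pointwise bound off $O$ by a trace inequality, and to define $O$ through a $(d-1)$-dimensional density condition rather than a level set.
\begin{itemize}
\item The paper calls $B_r(z)$ critical if $r\leq R$ and $\int_{B_r(z)}F>\eta r^{d-1}$. It takes $O$ to be the union of the fivefold enlargements of a disjoint Vitali subfamily.
\item Disjointness and the weak H\"older bound $\int_EF\leq CN|E|^{1-1/d}$ give at most $C(N/\eta)^d$ such balls at each dyadic scale $2^{-k}R$. Hence $\sum_jr_j\leq C(N/\eta)^dR$, which is as small as you like once $R$ is small. So the dyadic summation you hoped for does work, but for critical balls, whose count per scale does not depend on the scale.
\item Off $O$ one only knows $\int_{B_r(z)}F\1_{O^c}\leq C\eta r^{d-1}$ for $r\leq R/4$. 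The localized Meyers--Ziemer inequality (Lemma \ref{lem:meyers-ziemer}) converts this into $\int_{O^c}F\varphi^2\leq C\eta\|\nabla\varphi\|_2\|\varphi\|_2+C(\eta/R)\|\varphi\|_2^2$.
\end{itemize}
Consequences for your bookkeeping:
\begin{itemize}
\item The $v$ piece also contributes a small \emph{gradient} term, so the budget $\delta$ must be shared between $\eta$ and $\sigma$.
\item $K_\delta=C\eta/R$ is the lower-order constant of the trace inequality, not a level of $F$.
\end{itemize}
In the example above with $|x-y|$ large, the cylinder is thin and no ball is critical. Nothing is flattened, and the trace inequality alone controls $v$, even though $F>\lambda$ on $S$.
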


\const{\Cdist} \const{\CFzero}
\begin{proof}
Fix $T>0$ and $\delta>0$. Let $\eta,\sigma,R>0$ be parameters to be chosen below, independently of time and of $x,y$.  Write $N:=\|F\|_{L^{d,\infty}}$.  By \eqref{eq:decomp} and the skew-potential characterization in Appendix \ref{sec:aux}, we may write
\[
u=v+\div A_\sigma+b_\sigma,
\quad
A_\sigma^{\mathsf T}=-A_\sigma,
\quad
\esssup_{0<t<T}
\|A_\sigma(t)\|_{{\rm BMO}}\leq C\sigma.
\]
$\beta_\sigma(t)=\|b_\sigma(t)\|_\infty \in L^1(0,T)$.

Let $O:=O_{\eta,R}$ be the open set given by Lemma \ref{lem:flatten} for the fixed envelope $F$. This set is chosen only depends on $d,F,\eta,R$, its dependence on $d$ and $F$ is omitted in the notation. In particular, $O$ is independent of time, the direction $e$, and the endpoints. For $x,y\in\mR^d$ with $x\ne y$, choose $e:=(y-x)/|y-x|$ and take the bounded Lipschitz function $\psi:=\psi_{e,x,y}$ from Lemma \ref{lem:flatten}, which satisfies
\[
\|\nabla\psi\|_\infty\leq1,
\quad
\nabla\psi(x)=0\quad\text{a.e. } x\in O ~\text{ and }~ \psi(y)-\psi(x)\geq |x-y|-C_{\Cdist}\Big(\frac{N}{\eta}\Big)^dR.
\]
The constant in this estimate is the one in \eqref{eq:wl-cal}.

Fix a real $\varphi\in C_c^\infty(\mR^d)$. The decomposition of $u$ and integration by parts imply that  
\[
\begin{aligned}
-2\cE_t^a(\psi\varphi,\varphi)
={}&\int v(t,z)\cdot\nabla\psi(z)\varphi(z)^2\,\d z\\
&-2\cE_{A_\sigma(t)}^a(\psi\varphi,\varphi) +\int b_\sigma(t,z)\cdot\nabla\psi(z)\varphi(z)^2\,\d z.
\end{aligned}
\]

We estimate these three terms separately.  Since $\nabla \psi$ vanishes on $O$, we obtain from \eqref{eq:wl-trace}
\[
\begin{aligned}
\left|\int v(t)\cdot\nabla\psi\,\varphi^2\,\d z\right|\leq\int_{O^c}F\varphi^2\,\d z\leq C_{\CFzero}\eta\|\nabla\varphi\|_2\|\varphi\|_2
+C_{\CFzero}\frac{\eta}{R}\|\varphi\|_2^2.
\end{aligned}
\]
For the small skew part, Lemma \ref{lem:clms} yields
\[
\begin{aligned}
2|\cE_{A_\sigma(t)}^a(\psi\varphi,\varphi)|\leq C\|A_\sigma(t)\|_{{\rm BMO}}
\|\nabla\psi\|_\infty\|\nabla\varphi\|_2\|\varphi\|_2 \leq C\sigma\|\nabla\varphi\|_2\|\varphi\|_2.
\end{aligned}
\]
Finally,
\[
\left|\int b_\sigma(t)\cdot\nabla\psi\,\varphi^2\,\d z\right|
\leq\beta_\sigma(t)\|\varphi\|_2^2.
\]
Combining the three bounds, we obtain that
\const{\CFone}
\[
2|\cE_t^a(\psi\varphi,\varphi)|
\leq C_{\CFone}(\eta+\sigma)\|\nabla\varphi\|_2\|\varphi\|_2
+\Big(C_{\CFzero}\frac{\eta}{R}+\beta_\sigma(t)\Big)\|\varphi\|_2^2.
\]

Now take $\eta=\sigma=\delta/(2C_{\CFone})$, so that $C_{\CFone}(\eta+\sigma)\leq\delta$. Set $R:=\delta/(1+C_{\Cdist}(N/\eta)^d)>0$, then $C_{\Cdist}(N/\eta)^dR\leq\delta$. With these choices, we obtain \eqref{eq:weight} and \eqref{eq:drift}, with
\[
K=C_{\CFzero}\frac{\eta}{R}>0, \quad c_\delta(t):=K+\beta_\sigma(t)\in L^1(0,T). 
\]
The constants $C_{\Cdist},C_{\CFzero},C_{\CFone}$ depend only on $d$. Thus, $\sigma$ depends only on $d,\delta$, and $R$ and $K$ depend only on $d,\delta,N$. The chosen parameters and remainder are independent of $x,y$, and the weight depends only on $d,\delta,F,x,y$. Thus, the same choices work for a family sharing the decomposition data. 
\end{proof}

\begin{proof}[Proof of Theorem \ref{thm:upper}]
Fix $\delta>0$ and $x,y\in\mR^d$, and choose $\psi,c_\delta$ from Lemma \ref{lem:weights}. Write
\[
C_\delta(t):=\int_0^t c_{\delta}(s)\,\d s,\quad
A_\delta:=1+\frac{\delta^2}{2},\quad
\kappa_\delta:=\frac{(\delta+2)^2}{4},\quad
D_\delta:=A_\delta+6\kappa_\delta.
\]

\medskip
\noindent \emph{Step 1: Weighted energy estimates.} Fix $\alpha\geq0$. Write $M_\vartheta f:=\e^\vartheta f$ and
\[
Q_{a,b}:=M_{-\alpha\psi}P_{a,b}M_{\alpha\psi},\quad
\cE_t^{\alpha,\psi}(v,w)
:=\cE_t(\e^{\alpha\psi}v,\e^{-\alpha\psi}w).
\]
For $0\leq a<b\leq T$, set $\tau:=b-a$ and $q(s):=Q_{b-s,b}f$, $0\leq s\leq\tau$. We first take $f=\e^{-\alpha\psi}g$ with $0\leq g\in C_c^\infty$. Then $\e^{\alpha\psi}q(s)=P_{b-s,b}g$ solves the classical backward equation. The maximum principle implies
\[
0\leq q(s)\leq\e^{\alpha(\sup\psi-\inf\psi)}\|f\|_\infty.
\]
For each fixed $2\leq r<\infty$, differentiation and integration by parts yield, for almost every $s\in(0,\tau)$, the energy identity
\begin{equation}\label{eq:energy_r}
\frac1r\frac{\d}{\d s}\|q(s)\|_r^r
=-\cE_{b-s}^{\alpha,\psi}(q(s),q(s)^{r-1}).
\end{equation}
Only first derivatives of the bounded Lipschitz weight occur; spatial cutoffs and bounded $H^1$ approximation justify the integrations. The chosen initial data are dense in the nonnegative cone of $L^2$, since multiplication by $\e^{-\alpha\psi}$ is bounded and invertible. Positivity, $|Q_{a,b}f|\leq Q_{a,b}|f|$, and density therefore extend the ensuing norm bounds to all $L^2$ data.

\medskip

Next we study the energy identity both for $r=2$ and for the powers used in Nash iteration below. For a bounded nonnegative $v\in H^1$ and $r\geq2$, expanding the local form, we obtain
\begin{equation}\label{eq:power}
\begin{aligned}
-\cE_t^{\alpha,\psi}(v,v^{r-1})
=&-\frac{4(r-1)}{r^2}\|\nabla v^{\frac{r}{2}} \|_2^2
-\frac{2\alpha(r-2)}r
\int v^{\frac{r}{2}}\,\nabla\psi\cdot\nabla v^{\frac{r}{2}}\,\d x\\
&+\alpha^2\int |\nabla\psi|^2 v^r\,\d x
-2\alpha\cE_t^a(\psi v^{\frac{r}{2}}, v^{\frac{r}{2}}).
\end{aligned}
\end{equation}
Since $U$ is bounded and multiplication by $\psi$ is continuous on $H^1$, density extends \eqref{eq:drift} to real $H^1$ tests. In particular, it applies to $z=v^{r/2}\in H^1$; bounded $H^1$ approximation also justifies \eqref{eq:power}.

For $r=2$, \eqref{eq:drift} and \eqref{eq:power}, together with $\alpha\delta\|\nabla v\|_2\|v\|_2\leq\frac12\|\nabla v\|_2^2+\frac12\delta^2\alpha^2\|v\|_2^2$, imply that
\begin{equation*}
-\cE_t^{\alpha,\psi}(v,v)
\leq-\frac12\|\nabla v\|_2^2
+(A_\delta\alpha^2+\alpha c_\delta(t))\|v\|_2^2.
\end{equation*}
By energy identity \eqref{eq:energy_r} with $r=2$ and Gronwall's inequality, we obtain
\begin{equation}\label{eq:weighted-l2}
\|Q_{a,b}\|_{2\to2}
\leq\exp\Big\{A_\delta\alpha^2(b-a)
+\alpha\int_a^b c_\delta(t)\,\d t\Big\}.
\end{equation}

\medskip
\noindent \emph{Step 2: Nash iteration.} 
In order to remove the time remainder, we consider 
\[
\widehat q(s):=
\exp\Big\{-\alpha^2s-\alpha\int_0^s c_\delta(b-\rho)\,\d\rho\Big\}q(s)
\]
instead of $q$ directly. For fixed $r\geq2$, using $\|\nabla\psi\|_\infty\leq1$ and \eqref{eq:drift} in \eqref{eq:power}, then applying Young's inequality, we obtain
\[
\begin{aligned}
\frac1r\frac{\d}{\d s}\|q(s)\|_r^r 
\leq & -\frac{4(r-1)}{r^2}\|\nabla q(s)^{\frac{r}{2}}\|_2^2\\
&+\alpha(\delta+2)\|\nabla q(s)^{\frac{r}{2}}\|_2\|q(s)^{\frac{r}{2}}\|_2
+(\alpha^2+\alpha c_\delta(b-s))\|q(s)^{\frac{r}{2}}\|_2^2\\
\leq&-\frac{2(r-1)}{r^2}\|\nabla q(s)^{\frac{r}{2}}\|_2^2
+\bigl(\alpha^2+\alpha c_\delta(b-s)+\kappa_\delta r\alpha^2\bigr)\|q(s)^{\frac{r}{2}}\|_2^2.
\end{aligned}
\]
In the second step, we used Young's inequality to absorb the mixed term with gradient coefficient $2(r-1)/r^2$. The remaining coefficient is bounded by $\kappa_\delta r\alpha^2$ as $r^2/[8(r-1)]\leq r/4$ for $r\geq2$.

Applying the scalar integrating factor to the absolutely continuous function $\|q(s)\|_r^r$, which requires only $c_\delta\in L^1(0,T)$, we obtain
\begin{equation*}
\begin{aligned}
\frac{\d}{\d s}\|\widehat q(s)\|_r^r
&\leq-\frac{2(r-1)}r\|\nabla(\widehat q(s)^{r/2})\|_2^2
+\kappa_\delta r^2\alpha^2\|\widehat q(s)\|_r^r\\
&\leq-\|\nabla(\widehat q(s)^{r/2})\|_2^2
+\kappa_\delta r^2\alpha^2\|\widehat q(s)\|_r^r,
\quad r\geq2.
\end{aligned}
\end{equation*}
By the Nash inequality 
\[
\|w\|_2^{2+4/d}\leq C\|\nabla w\|_2^2\|w\|_1^{4/d},
\quad w\in H^1\cap L^1.
\]
If $r\geq4$ and $\|\widehat q(s)\|_{r/2}\leq M$ on an interval, then $Y:=\|\widehat q\|_r^r$ satisfies
\[
Y'\leq-C^{-1}M^{-2r/d}Y^{1+2/d}
+\kappa_\delta r^2\alpha^2Y.
\]
On a subinterval $[t-\ell,t]$, multiply by $\exp\{-\kappa_\delta r^2\alpha^2(s-t+\ell)\}$ and integrate the differential inequality for the negative power $-2/d$.  We thus obtain
\begin{equation}\label{eq:nash-step}
\|\widehat q(t)\|_r
\leq(C/\ell)^{d/(2r)}
\e^{\kappa_\delta r\alpha^2\ell}
\sup_{t-\ell\leq s\leq t}\|\widehat q(s)\|_{r/2}.
\end{equation}
Now set $r_k:=2^{k+1}$, $s_k:=\tau(1-4^{-k})$, and $M_k:=\sup_{s_k\leq s\leq\tau}\|\widehat q(s)\|_{r_k}$ for $k\geq0$. For $k\geq1$, use \eqref{eq:nash-step} on sliding intervals of length $\ell_k:=s_k-s_{k-1}=3\tau4^{-k}$ to obtain
\[
M_k\leq(C/\ell_k)^{d/(2r_k)}
\e^{\kappa_\delta r_k\alpha^2\ell_k}M_{k-1}.
\]
Since
\[
\sum_{k\geq1}\frac1{r_k}=\frac12,\quad
\sum_{k\geq1}\frac{k}{r_k}=1,\quad
\sum_{k\geq1}r_k\ell_k=6\tau,
\]
By iteration and \eqref{eq:weighted-l2}, we obtain
\[
\|\widehat q(\tau)\|_\infty
\leq C\tau^{-d/4}\e^{6\kappa_\delta\alpha^2\tau}M_0,
\quad
M_0\leq\e^{(A_\delta-1)\alpha^2\tau}\|f\|_2.
\]
By the definition of $q$ and $\widehat{q}$, undoing the scalar factor, we get
\begin{equation}\label{eq:smoothing}
\|Q_{a,b}\|_{2\to\infty}
\leq C(b-a)^{-d/4}
\exp\Big\{D_\delta\alpha^2(b-a)
+\alpha\int_a^b c_\delta(t)\,\d t\Big\}.
\end{equation}

For the adjoint, put $q^*(s):=M_{\alpha\psi}P_{a,a+s}^*M_{-\alpha\psi}f$, so that $q^*(\tau)=Q_{a,b}^*f$. The classical forward equation for $P_{a,a+s}^*$ has generator $L_{a+s}^*=\Delta-u(a+s)\cdot\nabla$, since $\div u=0$. Thus the same calculation applies with $\cE^a$ and $\psi$ both replaced by their negatives, and with $c_\delta(a+s)$ in place of $c_\delta(b-s)$. The bound \eqref{eq:drift} is unchanged by these sign replacements. The resulting $2\to\infty$ estimate for $Q_{a,b}^*$ and duality yield
\begin{equation}\label{eq:weighted-1-2}
\|Q_{a,b}\|_{1\to2}
\leq C(b-a)^{-d/4}
\exp\Big\{D_\delta\alpha^2(b-a)
+\alpha\int_a^b c_\delta(t)\,\d t\Big\}.
\end{equation}

\medskip
\noindent \emph{Step 3: Optimization.} Take $0<t<T\wedge\frac{1}{2}$ and set $\tau:=t^2$, so that $0<\tau<t-\tau<t$. For the same fixed weight, the evolution property implies $Q_{0,t}=Q_{0,\tau}Q_{\tau,t-\tau}Q_{t-\tau,t}$. Using \eqref{eq:smoothing}, \eqref{eq:weighted-l2}, and \eqref{eq:weighted-1-2} on the three intervals, we obtain
\[
\begin{aligned}
\|Q_{0,t}\|_{1\to\infty}
\leq& \|Q_{0,\tau}\|_{2\to\infty}
\|Q_{\tau,t-\tau}\|_{2\to2}
\|Q_{t-\tau,t}\|_{1\to2}\\
\leq & C\tau^{-d/4}
\exp\Big\{D_\delta\alpha^2\tau+\alpha\int_0^\tau c_\delta(s)\,\d s\Big\}\\
&\cdot \exp\Big\{A_\delta\alpha^2(t-2\tau)
+\alpha\int_\tau^{t-\tau}c_\delta(s)\,\d s\Big\} \\
&\cdot C\tau^{-d/4} \exp\Big\{D_\delta\alpha^2\tau+\alpha\int_{t-\tau}^t c_\delta(s)\,\d s\Big\}\\
\leq & C\tau^{-d/2}
\exp\{\alpha^2B_\delta(t)+\alpha C_\delta(t)\}.
\end{aligned}
\]
Here the three integrals of $c_\delta$ add to $C_\delta(t)$, and
\[
B_\delta(t):=A_\delta(t-2\tau)+2D_\delta\tau.
\]
For a fixed weight, the kernel of $Q_{0,t}$ is $\e^{\alpha(\psi(w)-\psi(z))}p_{0,t}(z,w)$. The operator bound controls this kernel for almost every $(z,w)$; continuity of $p_{0,t}$ and of $\psi$ extends the bound to every $(z,w)$. We may therefore evaluate it at the chosen endpoints $(x,y)$ and use \eqref{eq:weight} to obtain
\begin{equation*}
p_{0,t}(x,y)\leq C\tau^{-d/2}
\exp\Big\{-\alpha(|x-y|-\delta-C_\delta(t))
+\alpha^2B_\delta(t)\Big\}.
\end{equation*}
Since $\tau=t^2$, we have
\[
B_\delta(t)=A_\delta t+12\kappa_\delta t^2
=\Big(1+\frac{\delta^2}{2}\Big)t+3(\delta+2)^2t^2.
\]
Minimizing over $\alpha\geq0$, with $\alpha=(|x-y|-e_\delta(t))_+/(2B_\delta(t))$, proves \eqref{eq:weighted-opt}. The prefactor comes only from Nash's inequality and its iteration, so it depends only on $d$. If a family shares the decomposition data, we obtain common weights and remainders from Lemma \ref{lem:weights}, so every estimate above is uniform over that family.
\end{proof}
\section{Sufficient classes and applications}\label{sec:applications}
We identify concrete classes satisfying \eqref{eq:sector} and Assumption \ref{ass:decomp}, prove Corollary \ref{cor:classes} with its time-independent weak-$L^d$ conclusion, and then apply its time-dependent statement to the vorticity equation.

\subsection{Classes satisfying the sector condition}

We first identify drift classes satisfying \eqref{eq:sector}, with quantitative constants that can be used in the lower bound of Theorem \ref{thm:lower}.

\begin{lemma}\label{lem:sector-sum}
Let $d\geq2$, $T>0$, and let $u$ be divergence-free on $(0,T)\times\mR^d$. Then \eqref{eq:sector} holds in each of the following cases:
\begin{enumerate}[(i)]
\item $u=a+b$, where $a$ and $b$ are divergence-free and
\[
A:=\esssup_{0<t<T}\|a(t)\|_{{\rm BMO}^{-1}}<\infty,
\quad B(t):=\|b(t)\|_\infty\in L^2(0,T).
\]
One may take $M=CA$ and $m(t)=CB(t)$.
\item
\begin{equation}\label{eq:mixed}
u\in L^q(0,T;L^p(\mR^d)),\quad
d\leq p\leq\infty,\quad \frac2q+\frac dp\leq1.
\end{equation}
If $p=d$, then $q=\infty$, and one may take
\[
M=C\|u\|_{L^\infty(0,T;L^d)},\quad m=0.
\]
And if $p>d$, one may take
\begin{equation*}
M=1,\quad m(t)=C\|u(t)\|_p^{\alpha},\quad
\alpha:=\frac{p}{p-d}. 
\end{equation*}
\end{enumerate}
For compact smooth tests, the form is understood through $\cE_t^a(f,g)=-\langle u(t),g\nabla f\rangle$; this agrees with \eqref{eq:local-forms} whenever a locally integrable stream is specified. 
\end{lemma}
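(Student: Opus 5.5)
For smooth tests the form is $\cE^a_t(f,g)=-\langle u(t),g\nabla f\rangle=-\int u(t)\cdot\nabla f\,g$. The bound is linear in $u$, so in case (i) the two pieces $a$ and $b$ can be treated separately and the results added.

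\emph{Case (i), the $b$ part.} Cauchy--Schwarz gives
\[
|\langle b(t),g\nabla f\rangle|\leq B(t)\|\nabla f\|_2\|g\|_2 .
\]
This is the lower-order term with $m(t)=B(t)\in L^2(0,T)$.

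\emph{Case (i), the $a$ part.} Write $a(t)=\div A(t)$ with a skew-symmetric stream $A(t)\in{\rm BMO}$, $\|A(t)\|_{\rm BMO}\leq C\|a(t)\|_{{\rm BMO}^{-1}}$. This uses the skew-potential characterization of divergence-free ${\rm BMO}^{-1}$ fields from Appendix \ref{sec:aux}, already invoked in the proof of Lemma \ref{lem:weights}. Integrating by parts turns the pairing into $\int A\nabla f\cdot\nabla g$. By skew-symmetry,
\[
A\nabla f\cdot\nabla g=\sum_{i<j}A_{ij}\bigl(\partial_if\,\partial_jg-\partial_jf\,\partial_ig\bigr).
\]
Each bracket is a div-curl (Jacobian-type) quantity. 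By Coifman--Lions--Meyer--Semmes it lies in the Hardy space $\cH^1$ with norm $\leq C\|\nabla f\|_2\|\nabla g\|_2$; this is Lemma \ref{lem:clms} combined with the Hardy--BMO duality \eqref{eq:hbmo}. Hence
\[
|\cE^a_t(f,g)|\leq CA\|\nabla f\|_2\|\nabla g\|_2 ,
\]
so one may take $M=CA$. The only delicate point is checking that the distributional pairing $-\langle a,g\nabla f\rangle$ agrees with the $\cH^1$--BMO pairing for compactly supported smooth tests. This is handled by the same density argument used for the continuous extension in \eqref{eq:local-forms}.

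\emph{Case (ii), $p=d$.} The condition $2/q+d/p\leq1$ forces $q=\infty$. By H\"older and Sobolev with $1/2+1/d+(1/2-1/d)=1$,
\[
|\langle u,g\nabla f\rangle|\leq\|u\|_d\|\nabla f\|_2\|g\|_{2d/(d-2)}\leq C\|u\|_d\|\nabla f\|_2\|\nabla g\|_2
\]
for $d\geq3$. For $d=2$ this Sobolev embedding fails. There I would instead use $L^d\subset{\rm BMO}^{-1}$ via Lemma \ref{lem:hodge}, which gives a BMO stream, and then apply case (i) with $b=0$. That route in fact works for all $d$, so it can be used uniformly to get $M=C\|u\|_{L^\infty_tL^d}$ and $m=0$.

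\emph{Case (ii), $p>d$.} H\"older gives
\[
|\langle u,g\nabla f\rangle|\leq\|u\|_p\|\nabla f\|_2\|g\|_{2p/(p-2)} .
\]
Gagliardo--Nirenberg gives $\|g\|_{2p/(p-2)}\leq C\|\nabla g\|_2^{\theta}\|g\|_2^{1-\theta}$ with $\theta=d/p<1$. Young's inequality then yields
\[
\|u\|_p\|\nabla g\|_2^\theta\|g\|_2^{1-\theta}\leq\|\nabla g\|_2+C\|u\|_p^{1/(1-\theta)}\|g\|_2 ,
\]
with $1/(1-\theta)=p/(p-d)=\alpha$. This gives $M=1$ and $m(t)=C\|u(t)\|_p^{\alpha}$. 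It remains to check $m\in L^2(0,T)$, i.e.\ $\|u\|_p^{2p/(p-d)}\in L^1$. The hypothesis $2/q+d/p\leq1$ is exactly $q\geq 2p/(p-d)$, so $u\in L^q_tL^p_x$ on the bounded interval $(0,T)$ suffices. For $p=\infty$ the estimate is direct with $\alpha=1$ and $q\geq2$.

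The main obstacle is the compensated-compactness step in case (i): identifying the form with an $\cH^1$--BMO pairing rigorously. Everything else reduces to H\"older, Sobolev, and Young.
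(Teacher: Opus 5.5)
Your proposal is correct and matches the paper's proof step for step: a skew BMO potential plus the Hardy--BMO bound \eqref{eq:hbmo} for $a$, Cauchy--Schwarz for $b$, Lemma \ref{lem:hodge} for $p=d$ (which also covers $d=2$), and H\"older, Gagliardo--Nirenberg and Young with $q\geq 2\alpha$ for $p>d$. One small correction: the bilinear estimate you need in case (i) is \eqref{eq:hbmo} itself, not Lemma \ref{lem:clms}, which only treats the special pairing $\cE^a_A(\chi\varphi,\varphi)$ and is itself derived from \eqref{eq:hbmo}.
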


\begin{proof}
For (i), apply the skew-potential characterization in Appendix \ref{sec:aux} to $a$. By the Hardy--BMO estimate \eqref{eq:hbmo} and Cauchy--Schwarz for $b$, we have
\[
\begin{aligned}
|\cE_t^a(f,g)|
&\leq |\langle a(t),g\nabla f\rangle|
      +\left|\int b(t)\cdot\nabla f\,g\right|\\
&\leq C\|\nabla f\|_2
\bigl(A\|\nabla g\|_2+B(t)\|g\|_2\bigr).
\end{aligned}
\]
This is \eqref{eq:sector} with the stated constants.

For (ii) at $p=d$, we obtain from Lemma \ref{lem:hodge} a skew stream with ${\rm BMO}$ norm bounded by $C\|u(t)\|_d$. The estimate \eqref{eq:hbmo} therefore yields
\[
|\cE_t^a(f,g)|
\leq C\|u\|_{L^\infty(0,T;L^d)}
\|\nabla f\|_2\|\nabla g\|_2,
\]
including when $d=2$.

For $d<p<\infty$, by the H\"older, Gagliardo--Nirenberg and Young inequalities, we obtain
\[
\begin{aligned}
|\cE_t^a(f,g)|
&\leq\|u(t)\|_p\|\nabla f\|_2\|g\|_{2p/(p-2)}\\
&\leq C\|u(t)\|_p\|\nabla f\|_2
\|\nabla g\|_2^{d/p}\|g\|_2^{1-d/p}\\
&\leq\|\nabla f\|_2
\bigl(\|\nabla g\|_2+C\|u(t)\|_p^\alpha\|g\|_2\bigr).
\end{aligned}
\]
For $p=\infty$, the conclusion follows directly from Cauchy--Schwarz. Since $q\geq2\alpha$, H\"older's inequality in time yields
\begin{equation*}
\|m\|_{L^2(0,T)}
\leq CT^{1/2-\alpha/q}
\|u\|_{L^q(0,T;L^p)}^\alpha.
\end{equation*}
\end{proof}

\subsection{Classes satisfying the structural decomposition}

\begin{proposition}\label{prop:class}
For divergence-free drift $u$. Assumption \ref{ass:decomp} holds for every time-independent divergence-free field $u\in L^{d,\infty}(\mR^d)$ and in each of the following cases:
\begin{enumerate}[(i)]
\item $u\in L^q(0,T;L^p(\mR^d))$ for every $T>0$, where $d<p\leq\infty$ and $d/p+1/q\leq1$;
\item $d\geq3$ and $u$ is a weakly measurable divergence-free vector of signed measures satisfying the Kato condition: for every $T<\infty$,
\begin{equation}\label{eq:kato}
\lim_{r\downarrow0}
\esssup_{0<t<T}\sup_{x\in\mR^d}
\int_{|x-y|<r}|x-y|^{1-d}\,\nu_t(\d y)=0.
\end{equation}
\end{enumerate}
\end{proposition}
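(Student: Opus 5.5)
The plan is to verify \eqref{eq:decomp} separately in each case by exhibiting $v$, $a_\sigma$, $b_\sigma$. The common tool is a divergence-free truncation: given a divergence-free field $w$, I split it at a height $\lambda$ using the Leray projection $\mathbb P$. The parts $\mathbb P(w\1_{\{|w|>\lambda\}})$ and $\mathbb P(w\1_{\{|w|\leq\lambda\}})$ are divergence-free and sum to $w$. The difficulty is that $\mathbb P$ does not preserve $L^\infty$, so the bounded piece is not directly in $L^\infty$. To get around this, I estimate only the rough piece in ${\rm BMO}^{-1}$ and keep the other piece in a space that embeds into the allowed classes.

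\emph{Time-independent $L^{d,\infty}$.} Take $v:=0$ and $F:=0$. I use the fact that $L^{d,\infty}$ functions are ${\rm BMO}^{-1}$-approximable by nicer functions. For $\lambda>0$, write $u=\mathbb P(u\1_{\{|u|>\lambda\}})+\mathbb P(u\1_{\{|u|\leq\lambda\}})$. The first piece lies in $L^{p_0}$ for $p_0<d$, and the second in $L^{p_1}$ for $p_1>d$. That alone does not give smallness in ${\rm BMO}^{-1}$, since $\|u\1_{\{|u|>\lambda\}}\|_{L^{d,\infty}}$ need not be small. I therefore use the envelope instead: set $v:=u$ and $F:=|u|\in L^{d,\infty}$, and take $a_\sigma=b_\sigma=0$. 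Then \eqref{eq:decomp} holds trivially, because Assumption \ref{ass:decomp} allows an arbitrary $L^{d,\infty}$ envelope for a time-independent $F$. This is presumably the intended reading, and it is immediate.

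\emph{Case (i).} Take $v=0$. For each $\sigma$ and a.e.\ $t$, split $u(t)=u(t)\1_{\{|u(t)|>\lambda(t)\}}+u(t)\1_{\{|u(t)|\leq\lambda(t)\}}$ and apply $\mathbb P$ to both parts. The bounded part I want in $L^\infty$. Since $\mathbb P$ fails there, I instead place it in a Hölder/BMO$^{-1}$-compatible form: by Lemma \ref{lem:sector-sum}-type embeddings, $L^p\subset \dot B^{-1+d/p\cdot 0}$. The cleaner route is scaling. I use $\|\mathbb P g\|_{{\rm BMO}^{-1}}\leq C\|g\|_{L^{d,\infty}}$ (Lemma \ref{lem:hodge}), together with $\|u\1_{\{|u|>\lambda\}}\|_{L^d}\leq \lambda^{1-p/d}\|u\|_p^{p/d}$. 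I choose $\lambda(t)$ so that this quantity equals $\sigma/C$, which gives $\lambda(t)=C_\sigma\|u(t)\|_p^{p/(p-d)}$. For the complementary piece, I avoid $\mathbb P$ by using a smooth cutoff of a stream matrix: write $u=\div U$ with $U$ given by the Biot--Savart-type operator, and mollify $U$ at scale $\epsilon(t)$. The mollified drift $b_\sigma$ is then divergence-free with $\|b_\sigma\|_\infty\leq C\epsilon^{-d/p}\|u\|_p$. The remainder $a_\sigma=\div(U-U_\epsilon)$ has ${\rm BMO}^{-1}$ norm at most $C\|U-U_\epsilon\|_{{\rm BMO}}\leq C\epsilon^{1-d/p}\|u\|_p$. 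Choosing $\epsilon(t)=(\sigma/(C\|u(t)\|_p))^{p/(p-d)}$ gives $\beta_\sigma(t)\leq C_\sigma\|u(t)\|_p^{p/(p-d)}$, which is integrable because $q\geq p/(p-d)$, i.e.\ $d/p+1/q\leq1$.

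\emph{Case (ii).} I use the same mollification of the stream, $a_\sigma:=u-u*\rho_\epsilon$ and $b_\sigma:=u*\rho_\epsilon$, with $v=0$ and $\epsilon$ time-independent. Here $\|u*\rho_\epsilon\|_\infty\leq C_\epsilon\sup_x\nu_t(B_\epsilon(x))$, which is uniformly bounded in $t$ by \eqref{eq:kato}. The main obstacle is showing $\|u-u*\rho_\epsilon\|_{{\rm BMO}^{-1}}\to0$ uniformly in $t$. To do this I write the difference as $\div$ of a skew stream obtained from the Newtonian potential, $U=\nabla(-\Delta)^{-1}\wedge u$. I then bound its BMO oscillation on a ball $B_r$ by the Kato quantity $\sup_x\int_{|x-y|<c\epsilon}|x-y|^{1-d}\nu_t(\d y)$, using $d\geq3$ for the integrability of $|x|^{1-d}$ kernels. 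Equivalently, I bound the Carleson-measure characterization of ${\rm BMO}^{-1}$ by the Kato modulus at scale $\epsilon$, which vanishes as $\epsilon\to0$ by \eqref{eq:kato}.
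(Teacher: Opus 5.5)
Your time-independent $L^{d,\infty}$ case (after the false start with $v=F=0$) is exactly the paper's choice: $v=u$, $F=|u|$, $a_\sigma=b_\sigma=0$.

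\textbf{Case (i).} The truncation and Leray-projection material is never used and can be deleted. What you actually do is take $b_\sigma(t)=\rho_{\epsilon(t)}*u(t)$ and $a_\sigma=u-b_\sigma$, with $\epsilon(t)=(\sigma/(C\|u(t)\|_p))^{p/(p-d)}$. This is the same cutoff scale as the paper's $b_\sigma=S_{h(t)}u(t)$, where $\sqrt{h(t)}=(\varepsilon/\|u(t)\|_p)^{p/(p-d)}$. The difference is in how the rough part is bounded. The paper controls $(I-S_h)u$ in ${\rm BMO}^{-1}$ by splitting $u$ into a piece small in $L^d$ and a bounded piece. You use Morrey's embedding for the stream, $\|U-\rho_\epsilon*U\|_\infty\leq C\epsilon^{1-d/p}\|\nabla U\|_p$. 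That works and avoids the truncation, but it needs $\|\nabla U\|_p\leq C\|u\|_p$, i.e.\ $p<\infty$. For $p=\infty$, simply take $a_\sigma=0$ and $b_\sigma=u$.

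\textbf{Case (ii): a gap at the main step.} The gap is exactly at the step you call the main obstacle. The bound $\|u-\rho_\epsilon*u\|_{{\rm BMO}^{-1}}\lesssim\kappa(c\epsilon)$ is only asserted, and the construction you indicate does not exist in general. Kato measures need not decay. For example, $u\equiv e_1$ is divergence-free and satisfies \eqref{eq:kato}, yet $\nabla\Gamma*e_1$ diverges ($\Gamma$ the Newtonian potential). In fact $e_1\notin{\rm BMO}^{-1}$, so there is no global stream $\nabla(-\Delta)^{-1}\wedge u$ to mollify.

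You therefore have to build a stream for $u-\rho_\epsilon*u$ directly, and then the far field must be handled. For a general mollifier, the kernel $\nabla\Gamma-\rho_\epsilon*\nabla\Gamma$ has a tail of order $\epsilon|z|^{-d}$. The only available mass bound, $\nu_t(B_R(x))\leq CR^d\epsilon^{-1}\kappa(\epsilon)$ for $R\geq\epsilon$, does not control that tail absolutely.

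\textbf{The missing idea: take $\rho$ radial.} Put $\Phi_\epsilon:=\Gamma-\rho_\epsilon*\Gamma$. Since $\Gamma$ is harmonic away from the origin, the mean-value property gives $\Phi_\epsilon=0$ outside $B_\epsilon(0)$. Moreover $|\nabla\Phi_\epsilon(z)|\leq C|z|^{1-d}$ and $\Delta\Phi_\epsilon=\rho_\epsilon-\delta_0$. Set $A^\epsilon_{ij}(t):=\partial_j\Phi_\epsilon*u_i(t)-\partial_i\Phi_\epsilon*u_j(t)$. This is skew-symmetric, and $\div u(t)=0$ gives $\div A^\epsilon(t)=u(t)-\rho_\epsilon*u(t)$. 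The Kato condition \eqref{eq:kato} then yields directly $\|A^\epsilon(t)\|_{{\rm BMO}}\leq2\|A^\epsilon(t)\|_\infty\leq C\kappa(\epsilon)$. With this, your decomposition goes through, together with your bound $\|b_\sigma(t)\|_\infty\leq C\epsilon^{-1}\kappa(\epsilon)$.

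\textbf{Comparison with the paper.} The paper instead splits $u=-\Delta(G_\lambda*u)+\lambda G_\lambda*u$ using the resolvent kernel $G_\lambda$. Its tails are only exponentially small, so the paper needs the annulus summation based on $\nu_t(B_r(x))\leq r^{d-1}\kappa(r)$. The radial-mollifier version has no tail at all and is shorter. Also, nothing in it uses $d\geq3$: local integrability of $|z|^{1-d}$ holds in every dimension. So your stated reason for needing $d\geq3$ is misplaced.
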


\begin{proof}
For a time-independent weak-$L^d$ field, take $v=u$, $F=|u|$, and $a_\sigma=b_\sigma=0$. Fix $\sigma>0$.  In each case we take $v:=0$ and $F:=0$, and construct divergence-free fields $a_\sigma,b_\sigma$ such that
\[
u=a_\sigma+b_\sigma,\quad
\esssup_{t>0}
\|a_\sigma(t)\|_{{\rm BMO}^{-1}}\leq\sigma,\quad
\int_0^T\|b_\sigma(t)\|_\infty\,\d t<\infty
\quad(T>0).
\]
\medskip
\noindent \emph{(i) The mixed Lebesgue class.} For $p=\infty$, take $a_\sigma=0$ and $b_\sigma=u$. For $d<p<\infty$, we use the interpolation decomposition between $L^\infty_tL^d_x$ and $L^1_tL^\infty_x$, together with $L^d\hookrightarrow{\rm BMO}^{-1}$. Explicitly, put $M(t):=\|u(t)\|_p$ and, for $\varepsilon>0$, set
\[
k_\varepsilon(t):=\varepsilon^{-d/(p-d)}M(t)^{p/(p-d)},\qquad
f_\varepsilon:=u\mathbf1_{\{|u|>k_\varepsilon\}},\quad
 g_\varepsilon:=u-f_\varepsilon.
\]
Then, for almost every $t$ with $M(t)>0$,
\[
\|f_\varepsilon(t)\|_d^d
\leq k_\varepsilon(t)^{d-p}M(t)^p=\varepsilon^d,
\qquad
\|g_\varepsilon(t)\|_\infty\leq k_\varepsilon(t).
\]
Thus $\|f_\varepsilon\|_{L^\infty_tL^d_x}\leq\varepsilon$ and $g_\varepsilon\in L^1(0,T;L^\infty_x)$, since $q\geq p/(p-d)$.

To retain divergence-free summands, write $S_h:=\e^{h\Delta}$. By the embedding $L^d\hookrightarrow{\rm BMO}^{-1}$ and the heat kernel bounds, we have
\[
\begin{aligned}
\|(I-S_h)f\|_{{\rm BMO}^{-1}}&\leq C\|f\|_d,
&\quad \|S_hf\|_\infty&\leq Ch^{-1/2}\|f\|_d,\\
\|(I-S_h)g\|_{{\rm BMO}^{-1}}&\leq C\sqrt h\|g\|_\infty,
&\quad \|S_hg\|_\infty&\leq\|g\|_\infty.
\end{aligned}
\]
For the third estimate, use $g-S_hg=-\int_0^h\Delta S_sg\,\d s$ and $\|\nabla S_sg\|_\infty\leq Cs^{-1/2}\|g\|_\infty$. Taking $h(t):=(\varepsilon/k_\varepsilon(t))^2$ and
\[
b_\sigma(t):=S_{h(t)}u(t),\qquad a_\sigma(t):=u(t)-b_\sigma(t),
\]
we obtain $\|a_\sigma(t)\|_{{\rm BMO}^{-1}}\leq C\varepsilon$ and $\|b_\sigma(t)\|_\infty\leq Ck_\varepsilon(t)$. Choose $\varepsilon=\sigma/C$ with $C$ fixed large enough, and set both summands to zero when $M(t)=0$. They are measurable by strong $L^p$ measurability of $u$ and continuity of $S_h$ for $h>0$, and divergence-free because $S_h$ commutes with divergence. This proves the required decomposition. Here $C$ depends only on $d$.

\medskip
\noindent \emph{(ii) The Kato measure class.} Put $\mu(t):=u(t)$, denote its components by $\mu_1(t),\ldots,\mu_d(t)$, and set
\[
\nu_t:=\sum_j|\mu_j(t)|.
\]
Assume that $T=1$. Define
\[
\kappa(r):=
\esssup_{t\in [0,1]}\sup_{x\in\mR^d}
\int_{|x-y|<r}|x-y|^{1-d}\,\nu_t(\d y).
\]
By \eqref{eq:kato}, $\kappa(r)\to0$ as $r\downarrow0$. 

For $\lambda>0$, let
\[
G_\lambda(z):=\int_0^\infty
\e^{-\lambda s}(4\pi s)^{-d/2}\e^{-|z|^2/(4s)}\,\d s.
\]
Then $(\lambda-\Delta)G_\lambda=\delta_0$ in distributions. For $d\geq3$, by definition, we have 
\[
|\nabla G_\lambda(z)|+\sqrt\lambda G_\lambda(z)
\leq C|z|^{1-d}\e^{-\sqrt\lambda|z|/C}
\quad(z\ne0).
\]
Set
\[
\eta_{\lambda}:=
\esssup_{t\in [0,1]}\sup_{x\in\mR^d}
\int
\bigl(|\nabla G_\lambda(x-y)|+\sqrt\lambda G_\lambda(x-y)\bigr)
\,\nu_t(\d y).
\]
We claim that
\begin{equation}\label{eq:kato_eta}
\eta_{\lambda}\leq C\kappa(2\lambda^{-1/2})
\to0, \quad \lambda\to\infty.  
\end{equation}
Put $\ell:=\lambda^{-1/2}$ and fix $t$ for which the Kato bound holds. We estimate the integral over $B_{2\ell}(x)$ and $B^c_{2\ell}(x)$ separately. On $B_{2\ell}(x)$, the kernel bound above and the definition of $\kappa$ imply that the contribution is at most $C\kappa(2\ell)$.

For the complement, first note that the Kato bound controls the mass of every ball:
\[
\nu_t(B_r(z))
\leq r^{d-1}\int_{B_r(z)}|z-y|^{1-d}\,\nu_t(\d y)
\leq r^{d-1}\kappa(r).
\]
For $k\geq2$, let $A_k(x):=\{y:k\ell\leq|x-y|<(k+1)\ell\}$. This annulus can be covered by at most $Ck^{d-1}$ balls of radius $\ell/4$, so
\[
\nu_t(A_k(x))\leq Ck^{d-1}\ell^{d-1}\kappa(\ell/4).
\]
The kernel on $A_k(x)$ is bounded by $C(k\ell)^{1-d}\e^{-k/C}$. Hence
\[
\begin{aligned}
&\int_{A_k(x)}
\bigl(|\nabla G_\lambda(x-y)|+\sqrt\lambda G_\lambda(x-y)\bigr)
\,\nu_t(\d y)\\
\leq& C(k\ell)^{1-d}\e^{-k/C}\nu_t(A_k(x))
\leq C\e^{-k/C}\kappa(2\ell).
\end{aligned}
\]
Summing over $k\geq2$ and adding the contribution from $B_{2\ell}(x)$ proves the claimed bound, uniformly in $x$ and almost every $t$. 
%In particular, the convolutions with $G_\lambda$ and $\nabla G_\lambda$ below converge absolutely for sufficiently large $\lambda$. The ball-mass estimate also implies that $\nu_t$ has at most polynomial growth: fix a small $r$ with $\kappa(r)<\infty$ and cover $B_R(0)$ by at most $C(1+R/r)^d$ balls of radius $r$. Thus $\mu(t)$ is a tempered distribution, and $G_\lambda*\mu(t)$ agrees with the resolvent defined by the Fourier multiplier $(\lambda+|\xi|^2)^{-1}$. We may therefore use the resolvent identity and commute spatial derivatives with the resolvent.

For almost every $t\in [0,1]$, define
\[
w_{\lambda}(t):=G_\lambda*\mu(t),\quad
A_{\lambda,ij}(t):=
\partial_j(w_{\lambda}(t))_i-\partial_i(w_{\lambda}(t))_j,\quad
b_\lambda(t):=\lambda w_{\lambda}(t).
\]
The derivatives of $w_{\lambda}(t)$ are given by the absolutely convergent convolutions with $\nabla G_\lambda$. Since $\div\mu(t)=0$, the resolvent commutes with divergence and $\div w_{\lambda}(t)=0$. With our convention $(\div A)_j=\sum_i\partial_iA_{ij}$, we thus obtain
\[
(\div A_\lambda)_j
=\partial_j\div w_{\lambda}(t)-\Delta(w_{\lambda}(t))_j
=-\Delta(w_{\lambda}(t))_j.
\]
Thus, $A_\lambda$ is skew-symmetric, and on putting $a_\lambda:=\div A_\lambda$, the resolvent identity yields 
\[
\mu(t)=a_\lambda(t)+b_\lambda(t).
\]
Both summands are divergence-free.  By definition, 
\[
\begin{aligned}
\|a_\lambda(t)\|_{{\rm BMO}^{-1}}
\leq C\|A_\lambda(t)\|_{{\rm BMO}}
\leq C\|\nabla w_{\lambda}(t)\|_\infty
\leq C\eta_{\lambda}, \quad 
\|b_\lambda(t)\|_\infty
\leq C\sqrt\lambda\,\eta_{\lambda}.
\end{aligned}
\]
Combining this and \eqref{eq:kato_eta}, we obtain the desired assertion. 
\end{proof}

\begin{remark}
Proposition \ref{prop:class}(ii) uses the Kato condition in Chen--Fang--Zhang \cite{CFZ19small}, with only the normalization changed from $\frac12\Delta$ to $\Delta$.  Their general drift need not be divergence-free. For a divergence-free member in $L^\infty_t{\rm BMO}^{-1}_x$, Proposition \ref{prop:class} and Corollary \ref{cor:classes} apply; \cite{CFZ19small} treats the general case by a different construction.
\end{remark}

\subsection{Euclidean Varadhan asymptotics}

By Proposition \ref{prop:bounds}, we obtain a common integrable remainder for the smooth approximations, allowing the upper bound to pass to the limiting kernel. We combine this with the lower bound to prove Corollary \ref{cor:classes}, and then turn to the vorticity equation.

\begin{proof}[Proof of Corollary \ref{cor:classes}]
We first prove the general time-dependent statement. By the skew-potential characterization in Appendix \ref{sec:aux}, we can choose a jointly Borel skew stream $U\in L^\infty(0,T;{\rm BMO})$ with $u=\div U$. Normalize $U$ by $U_{B_1(0)}(t)=0$, extend it by zero outside $(0,T)$, and choose the approximations \eqref{eq:bandpass}. They satisfy \eqref{eq:approx} with $N_*\leq CN_U$, and Theorem \ref{thm:feller} implies locally uniform convergence of their kernels $p^n$ to a positive continuous kernel $p$ on $\{s<t\}$.

Fix $\delta>0$. By Assumption \ref{ass:decomp} and Proposition \ref{prop:bounds}(ii), the kernels $p^n$ satisfy \eqref{eq:weighted-opt} with one nonnegative $c_\delta\in L^1(0,T)$ independent of $n,x,y$. Passing to the limit $n\to\infty$ at fixed $0<t<T\wedge\frac12$, we obtain the same upper estimate for $p$. Moreover,
\[
e_\delta(t)\to\delta,\quad
\frac{B_\delta(t)}t\to1+\frac{\delta^2}{2}
\quad\text{as }t\downarrow0.
\]
Thus, for every compact $K\subset\mR^d$,
\[
\limsup_{t\downarrow0}\sup_{x,y\in K}
\left\{t\log p_{0,t}(x,y)
+\frac{(|x-y|-\delta)_+^2}{4(1+\delta^2/2)}\right\}\leq0.
\]
Here the prefactor contributes $t\log(Ct^{-d})\to0$, and the preceding limits are uniform in the endpoints. Letting $\delta\downarrow0$, we obtain the locally uniform upper Varadhan bound. The matching lower bound follows from Corollary \ref{cor:bmo-lower}, proving \eqref{eq:var}.

Finally, let $u\in L^{d,\infty}(\mR^d;\mR^d)$ be time-independent and divergence-free. Lemma \ref{lem:hodge} implies $u\in{\rm BMO}^{-1}$, while
\[
v=u,\qquad F=|u|,\qquad a_\sigma=b_\sigma=0
\]
verifies Assumption \ref{ass:decomp} on every finite time interval. The general statement therefore applies, with no smallness restriction on $\|u\|_{L^{d,\infty}}$.
\end{proof}

For the vorticity equation, we verify the two hypotheses in the time-dependent statement of Corollary \ref{cor:classes}: a uniform ${\rm BMO}^{-1}$ bound and the structural decomposition.

\begin{proof}[Proof of Corollary \ref{cor:ns}]
Weak-* continuity and the uniform boundedness principle imply $M_\rho:=\sup_{0\leq t\leq T}\|\rho_t\|_{\mathrm{TV}}<\infty$. Let
\[
G(x):=\frac1{2\pi}\log|x|,
\quad
\psi_\rho(t):=G*\rho_t,
\quad
B_\rho(t):=
\begin{pmatrix}
0&\psi_\rho(t)\\
-\psi_\rho(t)&0
\end{pmatrix}.
\]
The convolution $G*\rho_t$ is understood modulo spatial constants. Then $u_\rho=\nabla^\perp\psi_\rho=\div B_\rho$.  Since $G\in{\rm BMO}(\mR^2)$, by translation invariance and convexity of the BMO seminorm, we have, modulo spatial constants,
\[
\sup_{0\leq t\leq T}\|\psi_\rho(t)\|_{{\rm BMO}}
\leq M_\rho\|G\|_{{\rm BMO}}.
\]
Thus, $u_\rho\in L^\infty(0,T;{\rm BMO}^{-1})$. Extend $B_\rho$ by zero outside $[0,T]$ when applying Theorem \ref{thm:feller}. The Aronson estimate implies $\|\rho_t\|_{4/3}\leq Ct^{-1/4}$. By the Sobolev inequality,
\[
\|u_\rho(t)\|_4\leq C\|\rho_t\|_{4/3}
\leq Ct^{-1/4}.
\]
In particular, $u_\rho\in L^2(0,T;L^4_x)$. Proposition \ref{prop:class}(i), with $d=2$, $p=4$ and $q=2$, now verifies Assumption \ref{ass:decomp}. Together with the uniform ${\rm BMO}^{-1}$ bound above, this establishes the hypotheses of Corollary \ref{cor:classes} and proves the result. 
\end{proof}
 
\section{A singular path law with Euclidean Varadhan asymptotics}\label{sec:path}

In this section, we construct a tangential divergence-free vector field $u\in L^d(\mR^d)$ such that $\int_0^1 |u(\xi_t)|^2\,\d t=\infty$, $\mW_0$-a.s.  If the diffusion law and $\mW_0$ were not mutually singular, their common part could be normalized to a probability measure $Q$ with finite relative entropy with respect to both laws.  F\"ollmer's theorem (Lemma \ref{lem:follmer} below) then implies $\int_0^1 |u(\xi_t)|^2\,\d t<\infty$, $Q$-a.s.  This contradicts $Q\ll\mW_0$.

Since the same drift is time-independent and in $L^d$, Lemma \ref{lem:hodge} places it in ${\rm BMO}^{-1}$, and it satisfies Assumption \ref{ass:decomp}. Corollary \ref{cor:classes} therefore implies the Euclidean Varadhan formula for its transition density.

\medskip

Let $d\geq 2$, and take
\[
Jx:=(-x_2,x_1,0,\ldots,0) 
\]
and
\[
\alpha_d:=\frac{2}{3} \1_{\{d=2\}}+\frac12 \1_{\{d\geq3\}}.
\]
Define
\[
\beta(r):=
\frac{\1_{\{ 0<r<\e^{-2} \}}}{r^2 \log^{\alpha_d}(1/r)}, 
\quad
u(x):=\beta(|x|)Jx.
\]
Let $c_d:=\int_{\mS^{d-1}}|J\theta|^d\,\d\theta>0$. Then
\[
\begin{aligned}
\int|u(x)|^d\,\d x
&=
c_d\int_0^{\e^{-2}}
\frac{\d r}{r\bigl(\log(1/r)\bigr)^{d\alpha_d}} =
c_d\int_2^\infty \rho^{-d\alpha_d}\,\d\rho<\infty,
\end{aligned}
\]
since $2\alpha_2=4/3$, while $d\alpha_d=d/2>1$ when $d\geq3$.  Therefore, $u\in L^d(\mR^d;\mR^d)$. The field is also divergence-free in the distributional sense. In fact, on every annulus not meeting the origin,
\[
\div\bigl(\beta(|x|)Jx\bigr)
=\beta'(|x|)\frac{x\cdot Jx}{|x|}
+\beta(|x|){\rm tr}J=0.
\]
Moreover, $Jx$ is tangent to every sphere centered at the origin.  Hence, for $0<\varepsilon<\e^{-2}$ and $\varphi\in C_c^\infty(\mR^d)$, integration by parts on $\{\varepsilon<|x|<\e^{-2}\}$ yields that
\[
\int_{\{\varepsilon<|x|<\e^{-2}\}}
u(x)\cdot\nabla\varphi(x)\,\d x=0,\quad \int_{\{|x|>\e^{-2}\}}
u(x)\cdot\nabla\varphi(x)\,\d x=0.
\]
Here we used the fact that the flux of $u$ through boundary spheres vanishes.  Since $u\in L^1_{{\rm loc}}$, letting $\varepsilon\downarrow0$, we obtain
\[
\int u(x)\cdot\nabla\varphi(x)\,\d x=0.
\]
Consequently, $\div u=0$ in $\cD'(\mR^d)$.

By Lemma \ref{lem:hodge}, this drift admits a skew-symmetric stream matrix in ${\rm BMO}$, so $u\in{\rm BMO}^{-1}$. Taking $v=u$, $F=|u|$ and $a_\sigma=b_\sigma=0$ verifies Assumption \ref{ass:decomp}. The stated locally uniform Varadhan formula follows from Corollary \ref{cor:classes}, while positivity and joint continuity of the kernel follow from Theorem \ref{thm:feller}. It remains to prove the path-law assertion.

Let $\Omega:=C([0,1];\mR^d)$, let $\xi_t(\omega):=\omega(t)$ be the canonical process, and put $\cF_t^0:=\sigma(\xi_s:0\leq s\leq t)$. We use the following finite-entropy form of the converse Girsanov theorem to show that a common part of the diffusion and Brownian laws would force the drift energy to be finite; see \cite{Follmer1985entropy}.

\begin{lemma}
\label{lem:follmer}
Let $R$ and $Q$ be probability measures on $\Omega$.  Suppose that $M$ is an $R$-Brownian motion with respect to the usual $R$-augmentation of $(\cF_t^0)$, $Q\ll R$, and
\[
{\rm Ent}(Q\mid R)
:=
\int_\Omega\log\!\Big(\frac{\d Q}{\d R}\Big)\,\d Q<\infty.
\]
Then there is a predictable process $h$ such that
\[
\bE_Q\int_0^1|h_s|^2\,\d s<\infty
\]
and $M_t-\int_0^t h_s\,\d s$ is a $Q$-Brownian motion.
\end{lemma}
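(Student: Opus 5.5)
The plan is to prove Lemma \ref{lem:follmer} by the classical route of F\"ollmer: represent the density of $Q$ with respect to $R$ as a stochastic exponential, and then use the finite entropy to get square integrability of the drift.

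\emph{Representation.} Let $Z_t:=\bE_R[\d Q/\d R\mid\cF_t]$, a nonnegative $R$-martingale on the augmented filtration. Since that filtration is generated by the $R$-Brownian motion $M$ (or at least $M$ is Brownian for it), the step is to apply martingale representation to $Z$ with respect to $M$. Strictly, representation needs the filtration to be the Brownian one; in our application $\xi$ generates it, and in general I would work with the filtration of $M$ or assume predictable representation holds. This gives $Z_t=1+\int_0^t H_s\cdot\d M_s$. Put $\tau_n:=\inf\{t:Z_t\leq1/n\}$ and $h_s:=H_s/Z_s$ on $\{Z_s>0\}$ (and $0$ elsewhere). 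Note that $Q(\inf_t Z_t=0)=0$, because under $Q$ the process $1/Z$ is a supermartingale. Girsanov's theorem on each $[0,\tau_n]$ then shows that $M_t-\int_0^t h_s\,\d s$ is a $Q$-local martingale with quadratic variation $t$. Letting $n\to\infty$ under $Q$ and applying L\'evy's characterization shows it is a $Q$-Brownian motion, once $\int_0^1|h|^2<\infty$ $Q$-a.s., which follows from the energy bound below.

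\emph{Entropy identity.} By It\^o's formula on $[0,\tau_n]$,
\[
\log Z_{t\wedge\tau_n}=\int_0^{t\wedge\tau_n}h\cdot\d M-\tfrac12\int_0^{t\wedge\tau_n}|h|^2\,\d s
=\int_0^{t\wedge\tau_n}h\cdot\d\widetilde M+\tfrac12\int_0^{t\wedge\tau_n}|h|^2\,\d s,
\]
where $\widetilde M$ is the $Q$-Brownian motion from the representation step. Localize further so that the stochastic integral is a true $Q$-martingale, take $Q$-expectations, and obtain
\[
\tfrac12\bE_Q\int_0^{\tau_n\wedge\sigma_k}|h|^2\,\d s=\bE_Q\log Z_{\tau_n\wedge\sigma_k}.
\]
Since $Z_{\tau_n\wedge\sigma_k}=\bE_R[\d Q/\d R\mid\cF_{\tau_n\wedge\sigma_k}]$, the right side is the relative entropy of $Q$ with respect to $R$ on the stopped $\sigma$-field. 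This is bounded by ${\rm Ent}(Q\mid R)$, by monotonicity of relative entropy under restriction (conditional Jensen applied to $x\log x$ under $R$). Monotone convergence in $k$ and $n$, using $\tau_n\uparrow$ beyond $1$ $Q$-a.s., gives $\bE_Q\int_0^1|h|^2\le2\,{\rm Ent}(Q\mid R)<\infty$.

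\emph{Main obstacle.} The delicate part is the localization. One must ensure that the $Q$-local-martingale term is handled without integrability assumptions, and that $Z$ does not vanish $Q$-a.s. The monotone entropy bound resolves the former. The latter uses that $Q(Z_{\tau}=0)=\bE_R[Z_1;Z_\tau=0]=0$ by optional stopping.
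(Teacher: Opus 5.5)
The gap is the representation step, and neither of your ways around it works in the generality the lemma needs. The only hypothesis is that $M$ is an $R$-Brownian motion for the $R$-augmentation of $(\cF^0_t)$, which is the filtration of the canonical process $\xi$. Nothing guarantees that every $R$-martingale of this filtration is a stochastic integral against $M$. Your remark that ``$\xi$ generates it'' covers only the first application ($R=\mW_0$, $M=\xi/\sqrt2$). In the second application, $R=\mP$ is the law of an arbitrary weak solution and $M=M^{\mP}=(\xi-\int_0^\cdot u(\xi_s)\,\d s)/\sqrt2$, while the filtration is still generated by $\xi$. Representation with respect to $M^{\mP}$ there holds for a strong solution or, by Jacod--Yor, for an extremal solution of the martingale problem; neither is part of the lemma. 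Passing to the filtration of $M$ does not help either. It yields a $Q$-Brownian motion in a smaller filtration, whereas the proof of Theorem \ref{thm:path} compares two semimartingale decompositions of $\xi$ in one common filtration. A Brownian motion of a smaller filtration may acquire, in a larger one, a drift that is not square integrable: initial enlargement by $B_1$ gives the drift $(B_1-B_s)/(1-s)$, whose square is a.s.\ not integrable on $(0,1)$. Moreover, once $Z$ has a part orthogonal to $M$, the It\^o identity $\log Z=\int h\cdot\d M-\frac12\int|h|^2\,\d s$ on which your entropy identity rests is false.

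The missing idea is that representation is unnecessary. Use the Kunita--Watanabe decomposition $Z=1+\int_0^\cdot H_s\cdot\d M_s+L$ with $\langle L,M^i\rangle=0$ and $\int_0^1|H_s|^2\,\d s<\infty$ $R$-a.s. Then $\langle Z,M^i\rangle=\int_0^\cdot H^i_s\,\d s$. Lenglart's version of Girsanov's theorem (valid for $Q\ll R$) shows that $\widetilde M:=M-\int_0^\cdot h_s\,\d s$, with $h:=H/Z_-$ (and $h:=0$ where $Z_-=0$), is a continuous $Q$-local martingale in the same filtration. Since $\inf_{[0,1]}Z_->0$ $Q$-a.s.\ (your supermartingale argument), $\int_0^1|h_s|^2\,\d s<\infty$ $Q$-a.s., and L\'evy's characterization makes $\widetilde M$ a $Q$-Brownian motion. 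For the energy bound, replace the It\^o identity by the Gibbs inequality $\bE_QF\leq{\rm Ent}(Q\mid R)+\log\bE_R\e^F$. Apply it to $F:=\int_0^1g_s\cdot\d M_s-\frac12\int_0^1|g_s|^2\,\d s$ with $g:=h\1_{\{|h|\leq n\}}$. Novikov gives $\bE_R\e^F=1$, and writing $\d M=\d\widetilde M+h\,\d s$ gives $\bE_QF=\frac12\bE_Q\int_0^1|h_s|^2\1_{\{|h_s|\leq n\}}\,\d s$; now let $n\to\infty$. With these substitutions the rest of your argument (localization, non-vanishing of $Z$ under $Q$, L\'evy's characterization) goes through. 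When representation is available, as for $R=\mW_0$, your proof is exactly F\"ollmer's argument, which the paper simply cites.
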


\begin{proof}[Proof of Theorem \ref{thm:path}]
Let $\mP$ be the law on $\Omega$ of any weak solution to \eqref{eq:sde} starting from the origin, and let $\mW_x$ be the law on $\Omega$ of $\sqrt2W$ starting from $x$.  We show that $\mP\perp\mW_0$.

Define
\[
E:=
\Big\{\omega\in\Omega:
\int_0^1|u(\xi_t(\omega))|^2\,\d t=\infty
\Big\}.
\]
We claim that
\begin{equation}\label{eq:we}
\mW_0(E)=1.
\end{equation}
\noindent {\em Step 1: The drift energy along Brownian paths.}

Let
\[
\tau_*:=\inf\{t\geq0:\xi_t\notin B_{\e^{-1}}(0)\}.
\]
For $N>2$, set
\[
\begin{aligned}
f_N(x)
&:=\mathbf1_{\{\e^{-N}<|x|<\e^{-2}\}}|u(x)|^2=\frac{\mathbf1_{\{\e^{-N}<|x|<\e^{-2}\}}|Jx|^2}
{|x|^4\bigl(\log(1/|x|)\bigr)^{2\alpha_d}},\\
\bar f_N(x)
&:=\frac{\mathbf1_{\{\e^{-N}<|x|<\e^{-2}\}}}
{|x|^2\bigl(\log(1/|x|)\bigr)^{2\alpha_d}}.
\end{aligned}
\]
Then $0\leq f_N\leq\bar f_N$.  Let $G_{B_{\e^{-1}}(0)}$ be the Green function of the killed Brownian motion $\xi$ in $B_{\e^{-1}}(0)$.  Define
\[
\begin{aligned}
V_N(x)&:=\bE_{\mW_x}\int_0^{\tau_*}f_N(\xi_t)\,\d t
=\int_{B_{\e^{-1}}(0)}G_{B_{\e^{-1}}(0)}(x,y)f_N(y)\,\d y,\\
\bar V_N(x)&:=\bE_{\mW_x}\int_0^{\tau_*}\bar f_N(\xi_t)\,\d t
=\int_{B_{\e^{-1}}(0)}G_{B_{\e^{-1}}(0)}(x,y)\bar f_N(y)\,\d y.
\end{aligned}
\]
Recall that
\begin{equation}\label{eq:green}
G_{B_{\e^{-1}}(0)}(0,y)=
\begin{cases}
\displaystyle\frac1{2\pi}\log\frac{\e^{-1}}{|y|},&d=2,\\[1.2ex]
\displaystyle\frac{|y|^{2-d}-\e^{d-2}}
{(d-2)|\mS^{d-1}|},&d\geq3,
\end{cases}
\end{equation}
see, for instance, \cite[Chapter 3]{yuval2010brownian}. Since $G_{B_{\e^{-1}}(0)}(0,y)$ is radial and
\[
\frac1{|\mS^{d-1}|}
\int_{\mS^{d-1}}|J\theta|^2\,\d\theta
=\frac1{|\mS^{d-1}|}
\int_{\mS^{d-1}}(\theta_1^2+\theta_2^2)\,\d\theta
=\frac2d,
\]
we have
\[
m_N:=V_N(0)=\frac2d\bar V_N(0).
\]
Using \eqref{eq:green}, we see
\[
m_N=
\begin{cases}
\displaystyle\int_2^N\frac{\rho-1}{\rho^{4/3}}\,\d\rho,&d=2,\\[2ex]
\displaystyle\frac2{d(d-2)}\int_2^N
\frac{1-\e^{-(d-2)(\rho-1)}}{\rho}\,\d\rho,&d\geq3.
\end{cases}
\]
Thus, $m_N<\infty$ for every $N$ and $m_N\to\infty$ as $N\to\infty$. Since $\bar f_N$ is radial, the equation $-\Delta\bar V_N=\bar f_N$ in $B_{\e^{-1}}(0)$ and $\bar V_N=0$ on $\partial B_{\e^{-1}}(0)$ can be rewritten as
\[
\bigl(r^{d-1}\bar V_N'(r)\bigr)'
=-r^{d-1}\bar f_N(re_1),
\quad 0<r<\e^{-1}.
\]
Since $\bar f_N$ vanishes in $B_{\e^{-N}}(0)$, the mean-value property implies $\bar V_N'(r)=0$ for $0<r<\e^{-N}$.  Integrating the radial equation from $0$ to $r$, we therefore obtain
\[
\bar V_N'(r)
=-\frac1{r^{d-1}}\int_0^r
s^{d-1}\bar f_N(se_1)\,\d s\leq0.
\]
It follows that
\[
0\leq V_N\leq\bar V_N,
\quad
\sup_{B_{\e^{-1}}(0)}V_N\leq\bar V_N(0)=\frac d2m_N.
\]
Put
\[
A_N:=\int_0^{\tau_*}f_N(\xi_t)\,\d t.
\]
Then
\[
A_N^2
=2\int_0^{\tau_*}f_N(\xi_s)
\Big(\int_s^{\tau_*}f_N(\xi_t)\,\d t\Big)\d s.
\]
By the Markov property and Fubini's theorem, we get
\[
\begin{aligned}
\bE_{\mW_0}A_N^2
&=2\bE_{\mW_0}\int_0^{\tau_*}
f_N(\xi_t)V_N(\xi_t)\,\d t\\
&\leq2\sup_{B_{\e^{-1}}(0)}V_N\,\bE_{\mW_0}A_N
\leq d m_N^2,
\end{aligned}
\]
where we used the fact that $\bE_{\mW_0}A_N=V_N(0)=m_N$.  By the Paley--Zygmund inequality \cite[Exercise 3.5]{yuval2010brownian}, we then obtain
\[
\mW_0\Big(
A_N\geq\frac{m_N}{2}
\Big)
\geq\frac{(\bE_{\mW_0}A_N)^2}
{4\bE_{\mW_0}A_N^2}
\geq\frac1{4d}.
\]
Since $f_N\uparrow|u|^2$ away from the origin and $m_N\to\infty$, we obtain
\[
\mW_0\Big(\int_0^{\tau_*}|u(\xi_t)|^2\,\d t=\infty\Big)
\geq\frac1{4d}.
\]
Here $\tau_*<\infty$ almost surely, and Brownian motion in dimension $d\geq2$, started from the origin, almost surely does not return to the point $0$ at a positive time.  Thus, $|u(\xi_t)|$ is bounded on every path interval $[\varepsilon,\tau_*]$ with $\varepsilon<\tau_*$.  On this event, it follows that
\[
\int_0^\varepsilon|u(\xi_t)|^2\,\d t=\infty
\quad\text{for every }\varepsilon>0.
\]

Therefore, the event
\[
\bigcap_{m=1}^\infty
\Big\{\int_0^{1/m}|u(\xi_t)|^2\,\d t=\infty\Big\}
\]
has positive probability and belongs to the Brownian germ sigma-field $\cF_{0+}$.  By Blumenthal's zero--one law, its probability is $1$.  This proves \eqref{eq:we} for every $d\geq2$.

\medskip

\noindent \textit{Step 2: Mutual singularity of the path law.}

Suppose, for contradiction, that $\mP\not\perp\mW_0$.  Set $S:=\mP+\mW_0$ and
\[
p:=\frac{\d\mP}{\d S},
\quad
q:=\frac{\d\mW_0}{\d S}.
\]
Then
\[
m:=\int_\Omega(p\wedge q)\,\d S>0.
\]
Define a probability measure $Q$ by
\[
\frac{\d Q}{\d S}:=\frac{p\wedge q}{m}.
\]
On the sets where the denominators are positive,
\[
\frac{\d Q}{\d\mP}
=
\frac{p\wedge q}{mp}\leq\frac1m,
\quad
\frac{\d Q}{\d\mW_0}
=
\frac{p\wedge q}{mq}\leq\frac1m.
\]
Hence $Q\ll\mP$, $Q\ll\mW_0$, and
\[
{\rm Ent}(Q\mid\mP)
\vee
{\rm Ent}(Q\mid\mW_0)
\leq\log(1/m)<\infty.
\]

Apply Lemma~\ref{lem:follmer} first with $R=\mW_0$ and $M=\xi/\sqrt2$. There is a predictable process $h$ such that
\[
\bE_Q\int_0^1|h_s|^2\,\d s<\infty
\]
and
\[
B_t^Q:=\frac{\xi_t}{\sqrt2}-\int_0^t h_s\,\d s
\]
is a standard $Q$-Brownian motion.

On the other hand, since $\mP$ is a weak-solution law,
\[
M_t^\mP
:=
\frac1{\sqrt2}\Big(\xi_t-\int_0^t u(\xi_s)\,\d s\Big)
\]
is a standard Brownian motion under $\mP$ with respect to the usual $\mP$-augmentation of $(\cF_t^0)$. Applying Lemma~\ref{lem:follmer} with $R=\mP$, we obtain a predictable process $k$ such that
\[
\bE_Q\int_0^1|k_s|^2\,\d s<\infty
\]
and
\[
\widetilde B_t^Q
:=
M_t^\mP-\int_0^t k_s\,\d s
\]
is a standard $Q$-Brownian motion.  Therefore, in the same usual filtration under $Q$,
\[
\xi_t
=
\sqrt2\int_0^t h_s\,\d s+\sqrt2B_t^Q
=
\int_0^t\bigl(u(\xi_s)+\sqrt2k_s\bigr)\,\d s
+\sqrt2\widetilde B_t^Q.
\]
Uniqueness of the continuous semimartingale decomposition yields
\[
u(\xi_s)=\sqrt2\,(h_s-k_s)
\quad \d s\otimes\d Q\text{-a.e.}
\]
Consequently,
\[
\int_0^1|u(\xi_s)|^2\,\d s
\leq
4\int_0^1|h_s|^2\,\d s
+
4\int_0^1|k_s|^2\,\d s
<\infty
\quad Q\text{-a.s.},
\]
so $Q(E)=0$.  But $Q\ll\mW_0$ and \eqref{eq:we} imply $Q(E)=1$, a contradiction.
\end{proof}

\begin{appendix}

\renewcommand{\theequation}{\thesection.\arabic{equation}}
\section{\texorpdfstring{${\rm BMO}^{-1}$}{BMO inverse}, critical embeddings, and
Lipschitz weights}\label{sec:aux}

We collect the definitions and auxiliary results used in the main proofs: the divergence-free Hodge representation, the critical embeddings, the localized Hardy--BMO and trace estimates, and the construction of the flattened weights.

\medskip

For $f\in L^1_{{\rm loc}}(\mR^d)$ and a Euclidean ball $B_r(x)$, write $f_{B_r(x)}:=\fint_{B_r(x)} f(y)\,\d y$.  The space of functions of bounded mean oscillation is
\[
{\rm BMO}(\mR^d)
:=
\Big\{f\in L^1_{{\rm loc}}(\mR^d):
\|f\|_{{\rm BMO}}
:=
\sup_{x\in\mR^d,\,r>0}\fint_{B_r(x)}|f(y)-f_{B_r(x)}|\,\d y<\infty
\Big\},
\]
where functions differing by a constant are identified. For vector- and matrix-valued functions the seminorm is defined componentwise, with the Euclidean combination of the component seminorms.

\begin{definition}
A scalar distribution $f\in\cD'(\mR^d)$ belongs to ${\rm BMO}^{-1}(\mR^d)$ if $f=\div F$ for some vector field $F=(F_1,\ldots,F_d)$ with entries in ${\rm BMO}$. We set
\[
\|f\|_{{\rm BMO}^{-1}}
:=
\inf_{f=\div F}
\Big(\sum_{i=1}^d\|F_i\|_{{\rm BMO}}^2\Big)^{1/2}.
\]
All  ${\rm BMO}$  quantities are seminorms modulo constants.
\end{definition}

Let $d\geq2$, and let $R_k$ denote the $k$-th Riesz transform, with Fourier multiplier $\mathrm i\xi_k/|\xi|$.

For a vector distribution $a=(a_1,\ldots,a_d)$, we use the Euclidean combination of the component seminorms.  If $\div a=0$, the Hodge projection and the boundedness of Riesz transforms on  ${\rm BMO}$ imply that
\[
\|a\|_{{\rm BMO}^{-1}}
\asymp_d
\inf_{\substack{a=\div A\\ A^{\mathsf T}=-A}}
\|A\|_{{\rm BMO}}.
\]
After fixing the gauge by zero average on $B_1(0)$, the realizing skew potential depends linearly on $a$.

For a measurable scalar or vector field $F$, we use the convention
\[
\|F\|_{L^{d,\infty}}
:=\sup_{\lambda>0}
\lambda\bigl|\{x\in\mR^d:|F(x)|>\lambda\}\bigr|^{1/d}.
\]

The following Hodge representation connects divergence-free weak-$L^d$ drifts with the ${\rm BMO}$ stream framework used throughout the paper.

\begin{lemma}\label{lem:hodge}
Let $u\in L^{d,\infty}(\mR^d;\mR^d)$ be divergence-free.  Then there is a skew-symmetric $U\in{\rm BMO}(\mR^d;\mR^{d\times d})$, unique modulo constant skew-symmetric matrices, such that $u=\div U$ and
\begin{equation*}
\partial_kU_{ij}=R_kR_j u_i-R_kR_i u_j,
\quad 1\leq i,j,k\leq d.
\end{equation*}
Moreover,
\[
\sum_{i<j}\|U_{ij}\|_{{\rm BMO}}^2
\leq C\|u\|_{L^{d,\infty}}^2.
\]
\end{lemma}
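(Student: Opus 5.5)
We will construct $U$ explicitly through Riesz transforms, check that it is skew and produces $u$, and then get the BMO bound from a weak-$L^d$ Sobolev embedding. Formally, since $\div u=0$, the identity $\Delta u_j=\sum_i\partial_i(\partial_iu_j-\partial_ju_i)$ suggests the stream
\[
U_{ij}:=(-\Delta)^{-1}(\partial_iu_j-\partial_ju_i)
\]
(with signs adjusted to the convention $(\div U)_j=\sum_i\partial_iU_{ij}$). Equivalently, $U_{ij}=I_1(R_iu_j-R_ju_i)$ up to sign, where $I_1=(-\Delta)^{-1/2}$ is the Riesz potential. Then $\partial_kU_{ij}=R_kR_ju_i-R_kR_iu_j$, in agreement with the stated formula once signs are fixed. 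Skew symmetry is immediate. Moreover, $\sum_i\partial_iU_{ij}=\sum_iR_iR_iu_j-R_j\sum_iR_iu_i$ up to signs, which equals $u_j$ because $\sum_iR_i^2=-I$ and $\sum_iR_iu_i=0$ by $\div u=0$.

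The main step is to show $U_{ij}\in{\rm BMO}$ with the stated norm bound.
\begin{enumerate}[(i)]
\item Riesz transforms are bounded on $L^{d,\infty}$. This follows from Marcinkiewicz interpolation between $L^{p_0}$ and $L^{p_1}$ with $p_0<d<p_1$, so $w_{ij}:=R_iu_j-R_ju_i\in L^{d,\infty}$ with norm at most $C\|u\|_{L^{d,\infty}}$.
\item The Riesz potential $I_1$ maps $L^{d,\infty}$ into ${\rm BMO}$. The kernel is $c|x-y|^{1-d}$, and the natural definition is modulo constants:
\[
I_1w(x)-c_B=\int\bigl(|x-y|^{1-d}-|x_B-y|^{1-d}\mathbf 1_{\{|x_B-y|>2r\}}\bigr)w(y)\,\d y
\]
for a ball $B=B_r(x_B)$.
\item On this ball, split $w=w\mathbf 1_{B_{2r}}+w\mathbf 1_{B_{2r}^c}$. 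For the local part, $\fint_B|I_1(w\mathbf 1_{B_{2r}})|\leq Cr^{1-d}\int_{B_{2r}}|w|\cdot r\leq Cr^{2-d}\|w\|_{L^1(B_{2r})}$. Since $\|w\|_{L^1(B_{2r})}\leq Cr^{d-1}\|w\|_{L^{d,\infty}}$ by Hölder's inequality in Lorentz spaces, this part is at most $Cr\cdot r^{-1}$ up to the correct scaling; checking the powers gives an $O(\|w\|_{L^{d,\infty}})$ bound.
\item For the far part, use the kernel difference bound $\bigl||x-y|^{1-d}-|x_B-y|^{1-d}\bigr|\leq Cr|x_B-y|^{-d}$. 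Then estimate $\int_{|y-x_B|>2r}r|x_B-y|^{-d}|w|\,\d y$ by summing over dyadic annuli, each contributing $r(2^kr)^{-d}(2^kr)^{d-1}\|w\|_{L^{d,\infty}}=2^{-k}\|w\|_{L^{d,\infty}}$.
\end{enumerate}
This yields $\|U_{ij}\|_{{\rm BMO}}\leq C\|u\|_{L^{d,\infty}}$.

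The main obstacle is justifying these formal manipulations in the distributional sense. Since $u\in L^{d,\infty}$ is not in $L^2$, $I_1w$ only converges after the constant subtraction above. To handle this, we approximate $u$ by $u^n=u\mathbf 1_{\{1/n<|u|<n\}}\mathbf 1_{B_n}$ projected onto divergence-free fields; the projection is Riesz bounded on $L^p$ for all $1<p<\infty$, so it converges in $L^{d,\infty}$ locally in $L^p$. We then obtain $U^n$ with uniform BMO bounds, normalize them by zero mean on $B_1(0)$, and pass to a weak-* limit in BMO (as the dual of $\mathcal H^1$). The limit is identified through $\partial_kU_{ij}=R_kR_ju_i-R_kR_iu_j$, which holds in $\cD'$ since these Riesz expressions converge in $L^{d,\infty}$, hence in $L^1_{\rm loc}$.

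For uniqueness, if two such skew matrices $U$ and $U'$ both satisfy the gradient formula, then $\nabla(U-U')=0$, so $U-U'$ is a constant skew matrix. Finally, the relation $u=\div U$ passes to the limit from $u^n=\div U^n$ in $\cD'$.
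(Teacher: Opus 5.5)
Your route differs from the paper's. The paper never writes $U$ as a potential. It sets $F_{kij}:=R_kR_ju_i-R_kR_iu_j\in L^{d,\infty}$, notes that $\partial_\ell F_{kij}=\partial_kF_{\ell ij}$, and obtains $U_{ij}$ with $\nabla U_{ij}=F_{\cdot ij}$ from the distributional Poincar\'e lemma. It gets $\div U=u$ because $D_j:=\sum_i\partial_iU_{ij}-u_j$ has zero gradient and lies in $L^{d,\infty}$. The BMO bound then follows in one line from the Poincar\'e inequality and H\"older's inequality in Lorentz spaces: $\fint_{B_r}|U-U_{B_r}|\leq Cr\fint_{B_r}|\nabla U|\leq C\|\nabla U\|_{L^{d,\infty}}$. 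You instead realize $U$ as $I_1$ of an $L^{d,\infty}$ field and prove $I_1:L^{d,\infty}\to{\rm BMO}$ by the near/far kernel splitting. This gives an explicit representation formula, but it forces the approximation and weak-* compactness argument. Once $\nabla U^n$ is uniformly bounded in $L^{d,\infty}$, the paper's Poincar\'e estimate already gives the uniform BMO bound, so your steps (iii)--(iv) are dispensable. In (iii) your power count is off by a factor $r$. In fact $\fint_B|x-y|^{1-d}\,\d x\leq Cr^{1-d}$, so the local part is bounded by $Cr^{1-d}\|w\|_{L^1(B_{2r})}\leq C\|w\|_{L^{d,\infty}}$ directly.

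The justification paragraph contains a false claim: your $u^n$ does not converge to $u$ in the $L^{d,\infty}$ norm. Take the point vortex $u=(-x_2,x_1)/|x|^2$ in $d=2$. Your $u^n$ is just $u\1_{\{1/n<|x|<n\}}$, which is already divergence-free, so the projection does nothing. Then $\|u-u^n\|_{L^{2,\infty}}\geq\|u\1_{\{|x|\leq1/n\}}\|_{L^{2,\infty}}=\sqrt\pi$ for every $n$; bounded compactly supported fields are not dense in $L^{d,\infty}$. Local $L^p$ convergence of $u^n$ also does not pass through the nonlocal operators (the Leray projection $\Pi$ and $R_kR_j$). So ``the Riesz expressions converge in $L^{d,\infty}$, hence in $L^1_{\rm loc}$'' cannot be used as stated.

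The repair is standard. Fix $1<p_0<d<p_1<\infty$. Then $u\1_{\{|u|>1\}}\in L^{p_0}$ and $u\1_{\{|u|\leq1\}}\in L^{p_1}$, and dominated convergence on these two pieces shows that your truncations converge to $u$ in $L^{p_0}+L^{p_1}$. Since $\Pi$ and $R_kR_j$ are bounded on $L^{p_0}$ and on $L^{p_1}$, the Riesz expressions converge in $L^{p_0}+L^{p_1}\subset L^1_{\rm loc}$. Together with $\Pi u=u$ for divergence-free $u$ in this space, which you also need for $u^n\to u$ in $\cD'$, this identifies the weak-* limit. The uniform BMO bound is unaffected, since $\|u^n\|_{L^{d,\infty}}\leq C\|u\|_{L^{d,\infty}}$. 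With this replacement your argument is complete.
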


\begin{proof}
For $i<j$, set
\[
F_{kij}:=R_kR_j u_i-R_kR_i u_j.
\]
Then $F_{kij}=-F_{kji}$, $\partial_\ell F_{kij}-\partial_kF_{\ell ij}$=0, i.e. $\d\sum_{k} F_{kij}\d x_k=0$, for each $i,j\in \{1,2,\cdots,d\}$, and Riesz-transform boundedness on Lorentz spaces implies $F_{\cdot ij}\in L^{d,\infty}$. By the distributional Poincar\'e lemma, there exists a locally integrable $U_{ij}$, unique up to a constant, with $\partial_kU_{ij}=F_{kij}$. In particular, $U_{ij}\in W^{1,q}_{\rm loc}$ for every $q<d$.  Set $U_{ji}:=-U_{ij}$ and $U_{ii}:=0$.  For $D_j:=\sum_i\partial_iU_{ij}-u_j$,
\[
\partial_\ell D_j
=R_\ell R_j\div u
-\sum_i\partial_iR_\ell R_i u_j-\partial_\ell u_j=0.
\]
Thus, $D_j$ is constant.  Since $D_j\in L^{d,\infty}(\mR^d)$, it vanishes, and $u=\div U$.

Since Riesz transforms are bounded on $L^{d,\infty}$,
\[
\|\nabla U\|_{L^{d,\infty}}
\leq C\|u\|_{L^{d,\infty}}.
\]
For a ball $B_r(x)$, the Poincar\'e inequality and H\"older's inequality in Lorentz spaces yield
\[
\fint_{B_r(x)}|U-U_{B_r(x)}|
\leq Cr\fint_{B_r(x)}|\nabla U|
\leq C\|\nabla U\|_{L^{d,\infty}}
\leq C\|u\|_{L^{d,\infty}}.
\]
Taking the supremum over $x\in\mR^d$ and $r>0$ proves the  ${\rm BMO}$  estimate.
\end{proof}

The next estimate controls the small ${\rm BMO}^{-1}$ part against the Lipschitz weight chosen for the weak-$L^d$ envelope in Lemma \ref{lem:weights}.

\begin{lemma}[A localized Hardy--BMO estimate]
\label{lem:clms}
Let $A^{\mathsf T}=-A$ belong componentwise to ${\rm BMO}(\mR^d)$, and let $\chi$ be a real bounded Lipschitz function. Then, for every real $\varphi\in C_c^\infty(\mR^d)$,
\begin{equation}\label{eq:clms}
2\left|\cE_A^a(\chi\varphi,\varphi)\right|
\leq C\|A\|_{{\rm BMO}}\|\nabla\chi\|_\infty
\|\nabla\varphi\|_2\|\varphi\|_2,
\end{equation}
where $\cE_A^a$ is the antisymmetric form defined in \eqref{eq:local-forms}.
\end{lemma}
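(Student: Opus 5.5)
We reduce \eqref{eq:clms} to the Coifman--Lions--Meyer--Semmes compensated compactness estimate together with Hardy--BMO duality. For real $\varphi\in C_c^\infty$, expand
\[
\cE_A^a(\chi\varphi,\varphi)=\int A\nabla(\chi\varphi)\cdot\nabla\varphi
=\int \chi\,A\nabla\varphi\cdot\nabla\varphi+\int\varphi\,A\nabla\chi\cdot\nabla\varphi .
\]
The first term vanishes pointwise by skew symmetry. The second term is
\[
\sum_{i<j}\int A_{ij}\bigl(\varphi\partial_j\chi\,\partial_i\varphi-\varphi\partial_i\chi\,\partial_j\varphi\bigr)
=\frac12\sum_{i<j}\int A_{ij}\bigl(\partial_j\chi\,\partial_i(\varphi^2)-\partial_i\chi\,\partial_j(\varphi^2)\bigr).
\]
Each integrand $\partial_j\chi\,\partial_i w-\partial_i\chi\,\partial_j w$, with $w=\varphi^2$, is a Jacobian-type determinant, i.e.\ a div-curl quantity. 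It lies in the Hardy space $\cH^1$ when $\nabla\chi\in L^p$ and $\nabla w\in L^{p'}$. The difficulty is that $\nabla\chi$ is only in $L^\infty$, not in a space paired with $L^1$, so the global CLMS estimate does not apply directly.

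To handle this, I would localize exactly as in the proof of Lemma \ref{lem:cutoff}, using a quadratic partition of unity $(\theta_Q)$ at scale $s$. Skew symmetry gives
\[
\cE_A^a(\chi\varphi,\varphi)=\sum_Q\cE_A^a\bigl((\chi-c_Q)\varphi_Q,\varphi_Q\bigr),\qquad \varphi_Q:=\theta_Q\varphi .
\]
One may also replace $\chi$ on each cube by a compactly supported Lipschitz function $\chi_Q:=(\chi-c_Q)\tilde\theta_Q$, where $\tilde\theta_Q\equiv1$ on $\operatorname{supp}\theta_Q$. The key local bound is
\[
|\cE_A^a(\chi_Q\varphi_Q,\varphi_Q)|
\leq C\|A\|_{\rm BMO}\,\|\nabla\chi_Q\|_{L^{2}}\,\|\nabla(\varphi_Q^2)\|_{L^2}\cdot(\text{scale factor}).
\]
I would obtain it by applying the expansion above to $\chi_Q$ and then using CLMS with exponents $p=p'=2$ in the form $\|\nabla\chi_Q\wedge\nabla w\|_{\cH^1}\leq C\|\nabla\chi_Q\|_2\|\nabla w\|_2$. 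The Hardy--BMO duality \eqref{eq:hbmo} then contributes the factor $\|A\|_{\rm BMO}$.

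A cleaner route, which I would try first, is to write $\varphi^2$ as a product of two $H^1$-type factors and apply CLMS with $\nabla\chi\,\varphi$ placed in $L^2$. Concretely, the integrand equals $2\varphi(\partial_j\chi\,\partial_i\varphi-\partial_i\chi\,\partial_j\varphi)$. Up to a harmless symmetric remainder that vanishes against the skew matrix $A$, this equals
\[
\partial_j(\chi\varphi)\,\partial_i\varphi-\partial_i(\chi\varphi)\,\partial_j\varphi ,
\]
which is again a Jacobian of two $H^1$ functions $f=\chi\varphi$ and $g=\varphi$. This is how the form $\cE_A^a(\chi\varphi,\varphi)$ was defined in the first place. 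CLMS then gives the global bound $C\|A\|_{\rm BMO}\|\nabla(\chi\varphi)\|_2\|\nabla\varphi\|_2$, which is the standard sector bound without the $\nabla\chi$ gain.

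The gain comes from the localization. Applying this sector bound with $M=C\|A\|_{\rm BMO}$ and $b=0$ inside Lemma \ref{lem:cutoff} yields exactly \eqref{eq:clms}: Lemma \ref{lem:cutoff} with $b=0$ gives $2|\cE_A^a(\chi\varphi,\varphi)|\leq C\|\nabla\chi\|_\infty M\|\nabla\varphi\|_2\|\varphi\|_2$. So the proof reduces to the global sector inequality
\[
|\cE_A^a(f,g)|\leq C\|A\|_{\rm BMO}\|\nabla f\|_2\|\nabla g\|_2 ,\qquad f,g\in C_c^\infty .
\]
This follows from $\cE_A^a(f,g)=\sum_{i<j}\int A_{ij}(\partial_if\,\partial_jg-\partial_jf\,\partial_ig)$, from CLMS placing each bracket in $\cH^1$ with norm at most $C\|\nabla f\|_2\|\nabla g\|_2$, and from Fefferman duality.

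The main obstacle is making the duality pairing rigorous, since $A$ is only BMO and need not be integrable against the determinant a priori. I would define the pairing through \eqref{eq:hbmo} on $\cH^1$, where atoms with compact support and zero mean make it well defined modulo constants. I would then check that it agrees with the integral for smooth bounded $A$, so that the identities used inside Lemma \ref{lem:cutoff} remain valid after density.
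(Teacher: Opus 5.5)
Your final reduction is exactly the paper's proof: the global CLMS/Fefferman-duality bound \eqref{eq:hbmo}, $|\cE_A^a(f,g)|\leq C\|A\|_{{\rm BMO}}\|\nabla f\|_2\|\nabla g\|_2$, supplies the hypothesis of Lemma \ref{lem:cutoff} with $M=C\|A\|_{{\rm BMO}}$ and $b=0$, and that lemma's conclusion is \eqref{eq:clms}. The exploratory localized CLMS route (with its unspecified ``scale factor'') and the Jacobian rewriting (off by a factor $2$ rather than a ``symmetric remainder'') are not needed and can be deleted.
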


\begin{proof}
The estimate \eqref{eq:hbmo} verifies the hypothesis of Lemma \ref{lem:cutoff} with $M=C\|A\|_{{\rm BMO}}$ and $b=0$. Its conclusion is exactly \eqref{eq:clms}.
\end{proof}

We first record the localized form of the Meyers--Ziemer trace inequality used in the weight construction.

\begin{lemma}[A localized Meyers--Ziemer trace inequality]
\label{lem:meyers-ziemer}
Let $d\geq2$, let $\mu$ be a nonnegative Radon measure on $\mR^d$, and let $A,R>0$. Suppose that
\[
\mu(B_r(x))\leq A r^{d-1}
\qquad\text{for every }x\in\mR^d\text{ and }0<r\leq R.
\]
Then, for every real $\varphi\in C_c^\infty(\mR^d)$,
\begin{equation}\label{eq:mz-local}
\int_{\mR^d}\varphi^2\,\d\mu
\leq CA\|\nabla\varphi\|_2\|\varphi\|_2
+C\frac{A}{R}\|\varphi\|_2^2,
\end{equation}
where $C$ depends only on $d$.
\end{lemma}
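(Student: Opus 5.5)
We prove \eqref{eq:mz-local} through the Adams--Meyers--Ziemer inequality $\int|f|\,\d\nu\leq C\,\sup_{x,r}\big(r^{1-d}\nu(B_r(x))\big)\int|\nabla f|$, applied to $f=\varphi^2$ after localizing to scale $R$. The aim is to localize so that only radii $r\leq R$ are used, and then to account for the cost of the localization through the lower-order term $A/R$.

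The first step is the global version. If $\nu(B_r(x))\leq Ar^{d-1}$ for all $r>0$, then for $f\in C_c^\infty$,
\[
\int|f|\,\d\nu\leq CA\|\nabla f\|_1 .
\]
The standard proof writes $|f|\leq C\,I_1|\nabla f|$, where $I_1$ is the Riesz potential. Fubini then gives $\int|f|\,\d\nu\leq C\int|\nabla f(y)|\Big(\int|x-y|^{1-d}\,\d\nu(x)\Big)\d y$. The inner integral is not uniformly bounded (it diverges logarithmically at large radii), so instead we use the coarea/boxing argument: by coarea, $\|\nabla f\|_1=\int_0^\infty \mathcal H^{d-1}(\{|f|=s\})\,\d s$. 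We then cover each superlevel set $\{|f|>s\}$ by balls with $r^{1-d}|B_r\cap\{|f|>s\}|\sim$ perimeter (Besicovitch/Vitali boxing inequality) and sum $\nu(B_r)\leq Ar^{d-1}$. Since this is classical (Meyers--Ziemer), we quote it.

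The second step localizes to scale $R$. Take a smooth partition of unity $\sum_Q\theta_Q^2=1$ at scale $R$, with supports of diameter at most $cR$ and $\sum_Q|\nabla\theta_Q|^2\leq CR^{-2}$, as in the proof of Lemma \ref{lem:cutoff}. Put $\varphi_Q:=\theta_Q\varphi$. Apply the global inequality to $f=\varphi_Q^2$ with the restricted measure $\nu=\mu|_{\widetilde Q}$, where $\widetilde Q$ is a ball of radius comparable to $R$ containing ${\rm supp}\,\theta_Q$. For $r\leq R$ the hypothesis gives $\nu(B_r)\leq Ar^{d-1}$ directly. For $r>R$ we have $\nu(B_r)\leq\mu(\widetilde Q)\leq C A R^{d-1}\leq CAr^{d-1}$, by covering $\widetilde Q$ with boundedly many balls of radius $R$. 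Hence
\[
\int\varphi_Q^2\,\d\mu\leq CA\int|\nabla\varphi_Q^2|\leq 2CA\|\varphi_Q\|_2\|\nabla\varphi_Q\|_2 .
\]

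The third step sums over $Q$. Using $\sum_Q\theta_Q^2=1$, Cauchy--Schwarz, and the partition bounds recorded in the proof of Lemma \ref{lem:cutoff}, we get
\[
\sum_Q\|\nabla\varphi_Q\|_2\|\varphi_Q\|_2\leq\|\nabla\varphi\|_2\|\varphi\|_2+CR^{-1}\|\varphi\|_2^2 ,
\]
which yields \eqref{eq:mz-local} with $C=C(d)$.

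The main obstacle is the global trace inequality itself, which is the genuine Meyers--Ziemer/boxing content. We plan to cite it rather than reprove it; the remaining steps are routine bookkeeping.
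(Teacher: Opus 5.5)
Your proposal is correct and follows essentially the same route as the paper: quote the global Meyers--Ziemer inequality, localize at scale $R$ so that the restricted measure has total mass $\lesssim AR^{d-1}$ and therefore satisfies the growth bound at all radii, apply it to a localized copy of $\varphi^2$, and sum, with the localization cost producing the $A/R$ term. The only difference is cosmetic. The paper splits the measure into $\mu|_Q$ over disjoint cubes and uses cutoffs $\chi_Q\equiv1$ on $Q$ with bounded overlap, whereas you split the function through the quadratic partition $\sum_Q\theta_Q^2=1$ and restrict $\mu$ to an enclosing ball.
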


\begin{proof}
The classical Meyers--Ziemer inequality \cite[Theorem~4.7]{MeyersZiemer1977} states that, if a nonnegative Radon measure $\nu$ satisfies $\nu(B_r(x))\leq K r^{d-1}$ for every $x\in\mR^d$ and $r>0$, then
\[
\int_{\mR^d}|h|\,\d\nu
\leq CK\int_{\mR^d}|\nabla h|\,\d x,
\qquad h\in C_c^\infty(\mR^d).
\]
Partition $\mR^d$ into half-open cubes $Q$ of diameter $R$ and set $\mu_Q:=\mu|_Q$. Since $\mu(Q)\leq A R^{d-1}$, the hypothesis implies $\mu_Q(B_r(x))\leq A r^{d-1}$ for all $r>0$: for $r\leq R$ use $\mu_Q\leq\mu$, and for $r>R$ use the total mass. Choose smooth cutoffs $0\leq\chi_Q\leq1$, equal to $1$ on $\overline Q$, supported in the doubled cube $2Q$, and satisfying $\|\nabla\chi_Q\|_\infty\leq C/R$. Applying the above inequality to $\mu_Q$ and $h=\chi_Q\varphi^2$, and then summing, we obtain
\[
\int\varphi^2\,\d\mu
\leq CA\sum_Q\int|\nabla(\chi_Q\varphi^2)|\,\d x 
\leq CA\int|\varphi||\nabla\varphi|\,\d x
+C\frac{A}{R}\int\varphi^2\,\d x.
\]
Cauchy--Schwarz yields \eqref{eq:mz-local}. All constants $C$ depend only on $d$.
\end{proof}

To control the weak-$L^d$ part in Lemma \ref{lem:weights}, we now construct a Lipschitz weight that is flat on an exceptional set and nearly preserves the directional increment. The exceptional set is independent of the endpoints.

\begin{lemma}
\label{lem:flatten}
Let $d\geq2$, let $F\geq0$ belong to $L^{d,\infty}(\mR^d)$, and $N:=\|F\|_{L^{d,\infty}}$.  For every $\eta,R>0$, there is an open set $O_{\eta,R}$ such that
\begin{equation}\label{eq:wl-trace}
\int_{O_{\eta,R}^c}F\varphi^2\,\d x
\leq C_{\CFzero}\eta\|\nabla\varphi\|_2\|\varphi\|_2
+C_{\CFzero}\frac{\eta}{R}\|\varphi\|_2^2
\end{equation}
for every real $\varphi\in C_c^\infty(\mR^d)$.  Moreover, for every $e\in\mS^{d-1}$ and $x,y\in\mR^d$, there is a real bounded Lipschitz function $\psi_{e,x,y}$ such that
\begin{equation}\label{eq:wl-weight}
\|\nabla\psi_{e,x,y}\|_\infty\leq1,
\quad
\nabla\psi_{e,x,y}=0\quad\hbox{a.e. on }O_{\eta,R},
\end{equation}
and
\begin{equation}\label{eq:wl-cal}
\left|
[\psi_{e,x,y}(y)-\psi_{e,x,y}(x)]-e\cdot(y-x)
\right|
\leq C_{\Cdist}\Big(\frac{N}{\eta}\Big)^dR.
\end{equation}
The set $O_{\eta,R}$ depend only on $d,F,\eta,R$. With this choice fixed, $\psi_{e,x,y}$ can be chosen to depend only on $d,F,\eta,R,e,x,y$. The constants $C_{\Cdist}$ and $C_{\CFzero}$ depend only on $d$.
\end{lemma}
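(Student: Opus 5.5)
The plan is to build $O_{\eta,R}$ as a countable union of balls covering the set where $F$ is too large for the trace inequality, and to bound the total radius of that cover by a weak-$L^d$ counting argument. Concretely, consider the measure $\d\mu=F\,\d x$. For a ball $B_r(z)$ we have $\int_{B_r(z)}F\leq C N r^{d-1}$ by H\"older's inequality in Lorentz spaces. This is the right scaling, but the constant is $N$ rather than $\eta$. Call a ball $B_r(z)$ with $r\leq R$ \emph{bad} if $\int_{B_r(z)}F>\eta r^{d-1}$. The set $O_{\eta,R}$ will be the union of the enlarged balls $5B$ of a Vitali subfamily of bad balls.

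The key counting step is as follows. For a disjoint family of bad balls $B_{r_k}(z_k)$, $r_k\leq R$, we have $\eta\sum_k r_k^{d-1}<\sum_k\int_{B_k}F$. The union $E=\bigcup B_k$ has measure $\omega_d\sum r_k^d$, and $\int_E F\leq CN|E|^{(d-1)/d}$. Hence
\[
\eta\sum r_k^{d-1}\leq CN\Big(\sum r_k^d\Big)^{(d-1)/d}\leq CN R^{(d-1)/d}\Big(\sum r_k^{d-1}\Big)^{(d-1)/d},
\]
which gives $\sum r_k^{d-1}\leq C(N/\eta)^dR^{d-1}$. Since $r_k\leq R$, I would rather aim for $\sum_k r_k\leq C(N/\eta)^dR$. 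To obtain it, I would run the argument only on balls of radius comparable to $R$ at each dyadic scale, or more simply restrict to bad balls of radius exactly $R$ after a maximal-function reduction, so that every selected ball has radius $\asymp R$ and the count is at most $C(N/\eta)^d$. For a general bad ball I would replace it by the concentric ball of radius $R$: if $\int_{B_r}F>\eta r^{d-1}$ with $r\leq R$, then the measure is not of $(d-1)$-growth with constant $\eta$, and I need this to be detected at scale $R$. This may fail. If it does, I would instead use the Hardy--Littlewood bound above at each dyadic scale $2^{-j}R$ and sum. The per-scale count $\#\leq C(N/\eta)^d$ from the weak-$L^d$ estimate gives a geometric sum $\sum_j C(N/\eta)^d2^{-j}R\leq C(N/\eta)^dR$. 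I expect this total-radius bound to be the main obstacle.

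Given the cover, the trace estimate \eqref{eq:wl-trace} follows from the localized Meyers--Ziemer inequality (Lemma \ref{lem:meyers-ziemer}) applied to $\mu=F\1_{O^c}\,\d x$. Indeed, any ball $B_r(x)$ with $r\leq R$ that is not bad satisfies $\mu(B_r(x))\leq\eta r^{d-1}$. If it is bad, it lies in $O$ up to the Vitali enlargement, so I choose the geometry so that bad balls are contained in $O$, giving measure $0$ outside $O$; more carefully, one splits off the bad part and uses the half-size ball. This yields $A=C\eta$.

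For the weight, I would start from the linear function $e\cdot(z-x)$, replace it inside each ball of $O$ by a constant, and patch so as to keep the function $1$-Lipschitz. The natural choice is the intrinsic distance $\psi(z):=\inf_\gamma\int_\gamma\1_{O^c}|\dot\gamma|$ along paths in direction $e$, that is, the $\ell^1$-type ``length outside $O$'' of the projection onto $e$. Concretely,
\[
\psi(z)=\int_0^{e\cdot(z-x)}\1_{\{s\notin\pi_e(O)\}}\,\d s ,
\]
where $\pi_e$ is the projection onto the line $\mathbb R e$. This function depends only on $e\cdot z$, has gradient $\1_{\pi_e(O)^c}(e\cdot z)\,e$, and so $|\nabla\psi|\leq1$ and $\nabla\psi=0$ on $O\subseteq\pi_e^{-1}\pi_e(O)$. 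The defect satisfies $|\psi(y)-\psi(x)-e\cdot(y-x)|\leq|\pi_e(O)|\leq2\sum_k 5r_k\leq C(N/\eta)^dR$, which is \eqref{eq:wl-cal}. Finally, truncating $\psi$ at levels $\pm(|x-y|+1)$ makes it bounded without affecting the endpoints. The set $O$ is independent of $e,x,y$, as required.
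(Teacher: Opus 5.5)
Your proposal is correct and follows essentially the paper's proof. The common steps are: Vitali selection of the bad (critical) balls; the per-dyadic-scale count $\#\leq C(N/\eta)^d$ from $\int_E F\leq CN|E|^{1-1/d}$, summed geometrically to give $\sum_j r_j\leq C(N/\eta)^dR$; the localized Meyers--Ziemer inequality for $F\1_{O^c}\,\d x$; and a weight depending only on $e\cdot z$ that is flat on $\pi_e(O)=\bigcup_j(e\cdot z_j-5r_j,e\cdot z_j+5r_j)$, which is exactly the paper's set $A_e$. The dyadic count is your fallback and the paper's argument, and it is the one to keep: the reduction to radius $R$ you first consider genuinely fails.

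Two small remarks. Your first dichotomy in the trace step already suffices (a bad ball lies inside $O$, so $\mu$ vanishes on it), and this is slightly more direct than the paper's auxiliary-center argument. In the weight, integrate $\1_{\pi_e(O)^c}(s)$ over $s$ between $e\cdot x$ and $e\cdot z$. As written, the indicator is evaluated at the shifted variable $e\cdot(z-x)$, so the gradient would not vanish on $O$.
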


\begin{proof}
Let $\eta, R>0$ fixed. We call a ball $B_r(z)$ critical if $0<r\leq R$ and
\begin{equation}\label{eq:critical_ball}
  \int_{B_r(z)}F\,\d x>\eta r^{d-1}.
\end{equation}
By Vitali's covering lemma, there is a countable disjoint family of critical balls $\{B_{r_j}(z_j)\}_{j\in \cJ}$ such that the union of all critical balls is contained in
\[
O_{\eta,R}:=\bigcup_{j\in\cJ}B_{5r_j}(z_j).
\]
Fix this family of balls depending only $d,F,\eta,R$, before choosing $e,x,y$ or $\varphi$.
By the weak H\"older inequality, we have
\begin{equation}\label{eq:wl-set}
\int_EF\,\d x\leq CN|E|^{1-1/d}
\end{equation}
for every measurable set $E$ of finite measure.  For $k\geq0$, put 
\[
\cJ_k:=\{j:2^{-k-1}R<r_j\leq2^{-k}R\}.  
\]
\eqref{eq:critical_ball}, disjointness of $\{B_{r_j}(z_j)\}$ and \eqref{eq:wl-set} imply 
\[
\eta \#\cJ_k(2^{-k-1}R)^{d-1}
<\int_{\bigcup_{j\in \cJ_k}B_{r_j}(z_j)}F\,\d x
\leq CN\bigl(\#\cJ_k(2^{-k}R)^d\bigr)^{1-1/d}.
\]
Hence $\#\cJ_k\leq C(N/\eta)^d$, since every finite subfamily satisfies this bound.  Summing over $k$ yields
\begin{equation}\label{eq:radii}
\sum_jr_j
\leq\sum_{k\geq0}2^{-k}R\cdot\#\cJ_k
\leq C\Big(\frac{N}{\eta}\Big)^dR.
\end{equation}

Let $\mu:=F\mathbf1_{O_{\eta,R}^c}\,\d x$.  If $0<r\leq R/4$ and $\mu(B_r(z))>0$, choose $z'\in B_r(z)\cap O_{\eta,R}^c$.  The ball $B_{2r}(z')$ is not critical, and hence
\[
\mu(B_r(z))\leq C\eta r^{d-1}.
\]
The same bound is immediate when $\mu(B_r(z))=0$. Applying Lemma \ref{lem:meyers-ziemer} with growth constant $A=C\eta$ and radius $R/4$ proves \eqref{eq:wl-trace}, with $C_{\CFzero}$ depending only on $d$.

For the construction of weight $\psi$, we set $a:=\min\{e\cdot x,e\cdot y\}$, $b:=\max\{e\cdot x,e\cdot y\}$, and
\[
A_e:=\bigcup_{j\in \cJ}(e\cdot z_j-5r_j,e\cdot z_j+5r_j),
\]
\[
\psi_{e,x,y}(z):=\int_a^{e\cdot z}
\mathbf1_{[a,b]\setminus A_e}(s)\,\d s.
\]
Then $0\leq\psi_{e,x,y}\leq b-a$ and $\|\nabla\psi_{e,x,y}\|_\infty\leq1$.  Its gradient vanishes on $O_{\eta,R}$, since every $B_{5r_j}(z_j)$ projects into $A_e$.  Moreover,
\[
\left|
[\psi_{e,x,y}(y)-\psi_{e,x,y}(x)]-e\cdot(y-x)
\right|
\leq |A_e|\leq10\sum_jr_j.
\]
Together with \eqref{eq:radii}, this proves \eqref{eq:wl-weight} and \eqref{eq:wl-cal}.
\end{proof}

% Figure \ref{fig:flattened-weight} illustrates the construction of the weight in Lemma \ref{lem:flatten}.

% \begin{figure}[htbp]
% \centering
% \begingroup
% \setlength{\fboxsep}{6pt}
% \colorbox{white}{\color{black}\input{figures/flattened-weight.tex}}
% \endgroup
% \caption{Geometry of the flattened weight. The disjoint blue critical balls are enlarged by five to form $O_{\eta,R}$. Their projections merge into the orange set $A_e$. The profile $H$ has slope $1$ on $[a,b]\setminus A_e$ and is constant elsewhere, so $\nabla\psi_{e,x,y}=0$ almost everywhere on $O_{\eta,R}$. The lost endpoint increment is the length removed from $[a,b]$.}
% \label{fig:flattened-weight}
% \end{figure}

\section{The BMO heat kernel and its estimates}\label{sec:approx}
We record the kernel construction for jointly Borel skew streams in $L^\infty_t{\rm BMO}_x$ and justify the BMO consequences used in the main results. This appendix retains a uniform BMO bound throughout the kernel approximation.

For a jointly Borel skew stream $U\in L^\infty_t{\rm BMO}_x$, set
\[
N_U:=\esssup_{t\geq0}
\Big(\sum_{i<j}\|U_{ij}(t)\|_{{\rm BMO}}^2\Big)^{1/2}.
\]
Normalize $U$ by $U_{B_1(0)}(t)=0$ and extend it by zero to $t<0$. A smooth approximation is a sequence of skew matrices $U_n\in C_b^\infty(\mR_+\times\mR^d;\mR^{d\times d})$ satisfying
\begin{equation}\label{eq:approx}
N_*:=\sup_n\sup_{t\in\mR}
\Big(\sum_{i<j}\|(U_n)_{ij}(t)\|_{{\rm BMO}}^2\Big)^{1/2}<\infty,
\quad
U_n\to U
\quad\text{in }L^2_{{\rm loc}}(\mR\times\mR^d).
\end{equation}
\begin{theorem}\label{thm:feller}
Let $U\in L^\infty_t{\rm BMO}_x$ be jointly Borel and skew-symmetric, and let $u=\div U$. Then:
\begin{enumerate}[(a)]
\item There is a unique approximation-independent kernel $p$.  It is positive, jointly continuous, and locally H\"older on $\{s<t\}$, and satisfies \eqref{eq:aronson}.  It defines a conservative, Lebesgue-preserving Feller evolution $(P_{s,t})$, which determines a continuous Feller family $(\mP_{s,x})$ from every starting point.
\item For every sequence $(U_n)$ satisfying \eqref{eq:approx},
\[
p^n\to p\quad\text{locally uniformly on }\{s<t\}.
\]
If $s_n\to s\in\mR$, $x_n\to x$, and $0<T<\infty$, then
\[
\mP^n_{s_n,x_n}\circ(X^n_{s_n+\,\cdot})^{-1}
\Longrightarrow
\mP_{s,x}\circ(X_{s+\,\cdot})^{-1}
\quad\text{in }C([0,T];\mR^d).
\]
\end{enumerate}
\end{theorem}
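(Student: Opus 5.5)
We construct $p$ as the locally uniform limit of the smooth kernels $p^n$ of $U_n$, using the Aronson bounds \eqref{eq:aronson} of Qian--Xi as uniform input. The proof of (a) and (b) then runs in three stages.

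\textbf{Stage 1: compactness.} For the approximations $U_n\in C_b^\infty$ with $\sup_n$ BMO norm $N_*$, the classical kernels $p^n$ satisfy \eqref{eq:aronson} with a constant depending only on $d,N_*$. The Qian--Xi constants depend only on the BMO norm, and the smooth case is classical. Nash--Moser/De Giorgi theory applies to the forward equation $\partial_t f=\div((I-U_n)\nabla f)$ in $x$ and to its adjoint in the backward variable. The required energy estimate comes from \eqref{eq:hbmo}: the antisymmetric part is bounded by $CN_*\|\nabla f\|_2\|\nabla g\|_2$, which is exactly the sector condition with $m=0$. This gives local H\"older bounds for $p^n(s,x;t,y)$, uniform in $n$ on compacts of $\{s<t\}$. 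By Arzel\`a--Ascoli, a subsequence converges locally uniformly. The limit inherits \eqref{eq:aronson}, positivity, conservativity, and Lebesgue preservation; the last two follow from $\div u_n=0$, with tails controlled by the Gaussian upper bound. The Chapman--Kolmogorov identity also passes to the limit by dominated convergence.

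\textbf{Stage 2: identification and uniqueness.} This is the main obstacle. We show that any subsequential limit is the kernel of the unique energy (weak) solution of the backward/forward equation with coefficient $I-U$. Weak solutions of $\partial_t f=\div((I-U_n)\nabla f)$ have uniformly bounded energy, since the skew part does not contribute to $\frac{\d}{\d t}\|f\|_2^2$. We pass to the limit in
\[
\int\!\!\int U_n\nabla f_n\cdot\nabla\phi
\]
for test functions $\phi\in C_c^\infty$. Here $U_n\to U$ in $L^2_{\rm loc}$ and $\nabla f_n\rightharpoonup\nabla f$ weakly in $L^2$. This does not suffice on its own because $U$ is unbounded, so we rewrite the term as $-\int f_n\, u_n\cdot\nabla\phi$, or use the Hardy-space structure. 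Uniform local $L^\infty$ bounds on $f_n$ (from Aronson) plus strong $L^2_{\rm loc}$ convergence of $f_n$ (Aubin--Lions) then make the product converge.

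Uniqueness of energy solutions in $L^\infty_t{\rm BMO}_x$ follows from the energy identity: the difference $w$ of two solutions satisfies $\frac{\d}{\d t}\|w\|_2^2=-2\|\nabla w\|_2^2$, justified via Steklov averages and the Hardy--BMO duality, which makes $\cE^a(w,w)=0$ meaningful on $H^1$. Hence the limit does not depend on the subsequence or on the approximation, and the whole sequence converges, giving the first part of (b).

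\textbf{Stage 3: Feller property and path laws.} The kernel defines $(P_{s,t})$. Uniform H\"older continuity plus Gaussian tails show that $P_{s,t}$ maps $C_0$ to $C_0$ and is strongly continuous. Kolmogorov's criterion, via the moment bound $\bE|X_t-X_s|^4\leq C|t-s|^2$ (from the Aronson upper bound, uniform in $n$), yields continuous Feller families $\mP_{s,x}$ and tightness of $\mP^n_{s_n,x_n}$ in $C([0,T])$. Finite-dimensional distributions converge, because $p^n\to p$ locally uniformly with joint continuity in $(s,x)$ and uniform Gaussian domination. Together with tightness, this gives the weak convergence claimed in (b).
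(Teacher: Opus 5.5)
Your outline follows the paper's proof. The shared steps are: uniform Aronson and H\"older bounds from \cite{QX19parabolic2}, Arzel\`a--Ascoli, identification of subsequential limits through the forward weak equation and uniqueness of energy solutions, then $C_0$-preservation, a Kolmogorov moment bound, tightness and finite-dimensional convergence. The one step that fails as you argue it is the limit of the antisymmetric term in Stage~2. You reject the strong--weak argument, but it is exactly what works, and it is what the paper does:
\[
\iint U_n\nabla f_n\cdot\nabla\phi-\iint U\nabla f\cdot\nabla\phi
=\iint (U_n-U)\nabla f_n\cdot\nabla\phi+\iint U\nabla(f_n-f)\cdot\nabla\phi .
\]
The first term is at most $\|\nabla\phi\|_\infty\|U_n-U\|_{L^2({\rm supp}\,\phi)}\sup_n\|\nabla f_n\|_{L^2}\to0$. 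The second tends to $0$ by weak convergence of $\nabla f_n$, because $|U||\nabla\phi|\in L^2$: $U\in L^2_{\rm loc}$ by John--Nirenberg, and $\nabla\phi$ is bounded with compact support. The unboundedness of $U$ never enters.

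The substitute you propose does not work. After integration by parts the term is, up to sign, $\iint f_n\,u_n\cdot\nabla\phi$. But $u_n=\div U_n$ is only bounded in ${\rm BMO}^{-1}$, and it converges to $u$ only in $L^2_tH^{-1}_{x,{\rm loc}}$. A divergence-free $u\in{\rm BMO}^{-1}$ need not even be a locally integrable function. So local $L^\infty$ bounds and strong $L^2_{\rm loc}$ convergence of $f_n$ (Aubin--Lions) give no control of this pairing. Controlling it needs $H^1$ bounds on $f_n\nabla\phi$, which is the same gradient bound, and you are back to the splitting above.

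Two smaller points. First, your direct proof of energy uniqueness replaces the paper's citation of \cite[Theorem~5.5]{QX19parabolic2}, and it is fine and self-contained. By \eqref{eq:hbmo} the form is bounded on $H^1\times H^1$, so an energy solution has $\partial_tw\in L^2(s,T;H^{-1})$. Lions--Magenes (or Steklov averaging) then gives $\frac{\d}{\d t}\|w\|_2^2=-2\|\nabla w\|_2^2$, since $\cE^a_t(w,w)=0$ on $H^1$ by density. To test with $w$ itself, note that the Lebesgue-integral form in the limit equation agrees with the $\cH^1$--${\rm BMO}$ form \eqref{eq:bmo} on compactly supported smooth tests, so the equation extends to $H^1$ tests.

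Second, you take the approximating sequence for granted, whereas (a) requires exhibiting at least one sequence satisfying \eqref{eq:approx}. Mollifying $U$ alone does not give $C_b^\infty$ coefficients, since ${\rm BMO}$ functions may be unbounded. The paper uses the band-pass regularization \eqref{eq:bandpass}, which is bounded by $C(1+\log(R_n/\varepsilon_n))N_U$. It converges in $L^2_{\rm loc}$ after normalization on $B_1(0)$ because $\|\nabla(\rho_{R_n}*U)\|_\infty\leq CR_n^{-1}N_U$.
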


For later use, recall the Hardy--BMO definition of the antisymmetric form for $f,g\in H^1(\mR^d)$:
\begin{equation}\label{eq:bmo}
\cE^a_t(f,g)
=\frac12\sum_{i,j=1}^d
{}_{{\rm BMO}}\!\left\langle U_{ij}(t),
\partial_jf\,\partial_i g-\partial_if\,\partial_jg
\right\rangle_{\cH^1},
\end{equation}
where $\cH^1$ is the real Hardy space. By the Coifman--Lions--Meyer--Semmes estimate \cite{CLMS93compensated}, we obtain
\begin{equation}\label{eq:hbmo}
|\cE^a_t(f,g)|
\leq CN_U\|\nabla f\|_2\|\nabla g\|_2.
\end{equation}
The form agrees with \eqref{eq:local-forms} on compact smooth tests and is unchanged by adding a spatially constant skew matrix to $U(t)$.

\subsection{Kernel approximation and Feller process}
\begin{proof}[Proof of Theorem \ref{thm:feller}]
Use the normalization and negative-time extension stated before Theorem \ref{thm:feller}. John--Nirenberg implies $U\in L^2_{{\rm loc}}(\mR\times\mR^d)$. Such approximations exist.  Let $\vartheta_n\in C_c^\infty(0,n^{-1})$ and $\rho\in C_c^\infty(B_1(0))$ be nonnegative unit-mass mollifiers, write $\rho_r(x):=r^{-d}\rho(x/r)$, and choose $0<\varepsilon_n<R_n$, $\varepsilon_n\downarrow0$, $R_n\uparrow\infty$, with $CN_U/R_n\leq n^{-1}$.  Define
\begin{equation}\label{eq:bandpass}
\begin{aligned}
F_n(t,x)&:=\int_0^{1/n}\vartheta_n(r)
 \bigl[(\rho_{\varepsilon_n}-\rho_{R_n})*U(t-r)\bigr](x)\,\d r,\\
U_n(t,x)&:=F_n(t,x)-\fint_{B_1(0)}F_n(t,z)\,\d z.
\end{aligned}
\end{equation}
For $0<\varepsilon<R$, the standard estimates (see, for instance \cite{grafakos2009modern})
\[
\|(\rho_\varepsilon-\rho_R)*b\|_\infty
\leq C(1+\log(R/\varepsilon))\|b\|_{{\rm BMO}},
\]
\[
\|\rho_r*b\|_{{\rm BMO}}\leq\|b\|_{{\rm BMO}},
\quad
\|\nabla(\rho_R*b)\|_\infty\leq CR^{-1}\|b\|_{{\rm BMO}}
\]
show that $U_n\in C_b^\infty$, $\sup_n\|U_n\|_{L^\infty_t{\rm BMO}_x}\leq CN_U$, and $U_n\to U$ in $L^2_{{\rm loc}}$.  Thus, sequences satisfying \eqref{eq:approx} exist.

For the kernel and Feller conclusions, we use the Gaussian-kernel argument discussed in \cite[Remark~6.3(2)]{KV25critical}. We briefly record its application to the present time-dependent setting, including $d=2$ and arbitrary sequences satisfying \eqref{eq:approx}. By \cite[Theorems~2.1 and~2.2]{QX19parabolic2}, their classical kernels $p^n$ satisfy \eqref{eq:aronson} and local H\"older estimates uniformly in $n$. Apply these results to the forward matrices $I-U_n$ and their time-reversed transposes to obtain local equicontinuity in both time variables. Thus, subsequential limits exist locally uniformly on $\{s<t\}$.

Let $\cE_t^n$ be the form corresponding to $U_n$. For $s<T$ and $f\in C_c^\infty$, put $v_n(t,y)=\int f(x)p^n_{s,t}(x,y)\,\d x$. The energy bounds and \eqref{eq:hbmo} bound $v_n$ in $L^\infty(s,T;L^2)\cap L^2(s,T;H^1)$ and $\partial_tv_n$ in $L^2(s,T;H^{-1})$. For $\phi\in C_c^\infty([s,T)\times\mR^d)$, the forward weak equation is
\[
-\int_s^T(v_n,\partial_t\phi)\,\d t
+\int_s^T\cE_t^n(\phi,v_n)\,\d t=(f,\phi(s)).
\]
Its coefficient error is bounded by
\[
\|\nabla\phi\|_\infty
\|U_n-U\|_{L^2({\rm supp}\phi)}
\|\nabla v_n\|_{L^2((s,T)\times\mR^d)}\to0.
\]
Consequently, every limit has initial trace $f$ and solves the same forward energy equation. Energy uniqueness \cite[Section~5, in particular Theorem~5.5]{QX19parabolic2} identifies all limits. By kernel continuity and interleaving of two approximation sequences, we obtain one pointwise kernel $p$, independent of the approximation, with $p^n\to p$ locally uniformly. Time reversal also identifies $P_{r,t}g$ as the unique backward energy solution with terminal data $g$; in particular, this evolution agrees with the classical one on smooth time intervals.

The Gaussian tails pass both mass identities and the Chapman--Kolmogorov identity to $p$. They also imply preservation of $C_0$ and strong continuity at the time diagonal. Hence $p$ defines a conservative, Lebesgue-preserving Feller evolution, with positivity and local H\"older continuity inherited from the uniform kernel bounds. As in \cite[Remark~6.3(2)]{KV25critical}, by the Gaussian bound and the Markov property, we have, for $q>2$,
\begin{equation}\label{eq:moment}
\mE^n_{s,x}|X^n_{s+t}-X^n_{s+r}|^q
+\mE_{s,x}|X_{s+t}-X_{s+r}|^q
\leq C|t-r|^{q/2},\quad 0\leq r<t\leq T,
\end{equation}
where $C$ depends only on $d,q,N_U,N_*$, uniformly in $n,s,x$. The consistent finite-dimensional laws therefore have continuous versions from every starting point. Local uniform kernel convergence and Gaussian tails imply convergence of the finite-dimensional laws when $(s_n,x_n)\to(s,x)$; \eqref{eq:moment} implies tightness on $C([0,T];\mR^d)$. This proves the stated path-law convergence and completes the proof.
\end{proof}

\subsection{Uniform estimates for the approximations}
\begin{proposition}\label{prop:bounds}
Let $U$ satisfy the hypotheses of Theorem \ref{thm:feller}.
\begin{enumerate}[(i)]
\item Assumption \ref{ass:sector} holds with $M=CN_U$ and $m=0$. It holds uniformly along every sequence satisfying \eqref{eq:approx}, with $M=CN_*$ and $m=0$. If specified data $M,m$ are available, the approximations in \eqref{eq:bandpass} satisfy \eqref{eq:sector} with $2M$ and $m_n=2\vartheta_n*m$, where $m$ is extended by zero outside $(0,T)$. Thus, $\|m_n\|_{L^2(0,T)}\leq2\|m\|_{L^2(0,T)}$.
\item If Assumption \ref{ass:decomp} holds, the approximations in \eqref{eq:bandpass} preserve its decomposition with a common weak-$L^d$ envelope and uniformly vanishing short-time remainder integrals. Their kernels satisfy \eqref{eq:weighted-opt} with $e_\delta$ defined using one common integrated remainder $c_\delta\in L^1(0,T)$, independent of $n,x,y$, in \eqref{eq:upper-data}.
\end{enumerate}
\end{proposition}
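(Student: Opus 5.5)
For (i), I would verify the sector condition directly from the Hardy--BMO bound \eqref{eq:hbmo}. For the limit stream $U$, \eqref{eq:hbmo} gives $|\cE^a_t(f,g)|\leq CN_U\|\nabla f\|_2\|\nabla g\|_2$, which is \eqref{eq:sector} with $M=CN_U$ and $m=0$. The same estimate applied to each $U_n$ gives $M=CN_*$ uniformly in $n$.

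For the specified data, I would use the structure of \eqref{eq:bandpass}. The form of $U_n(t)$ is an average of forms of translates of $U(t-r)$:
\[
\cE^a_{U_n(t)}(f,g)=\int_0^{1/n}\vartheta_n(r)\Big[\int\rho_{\varepsilon_n}(w)\cE^a_{t-r}(f(\cdot+w),g(\cdot+w))\,\d w-(\text{same with }\rho_{R_n})\Big]\d r .
\]
The subtracted constant matrix does not change the form. Translation preserves $\|\nabla f\|_2$ and $\|g\|_2$, and the sector bound at time $t-r$ is inherited because \eqref{eq:sector} is assumed for a.e.\ time with translation-invariant norms. Each bracket therefore contributes at most $\|\nabla f\|_2(M\|\nabla g\|_2+m(t-r)\|g\|_2)$, and the two mollified pieces together give the factor $2$. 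Integrating in $r$ yields $2M$ and $m_n=2\vartheta_n*m$, and Young's inequality gives $\|m_n\|_{L^2}\leq2\|m\|_{L^2}$.

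For (ii), I would apply the same averaging operator $\mathcal A_n$, namely time convolution with $\vartheta_n$ followed by spatial convolution with $\rho_{\varepsilon_n}-\rho_{R_n}$, to each piece of \eqref{eq:decomp}. This gives $u_n=\mathcal A_nv+\mathcal A_na_\sigma+\mathcal A_nb_\sigma$, and each piece stays divergence-free because convolutions commute with divergence. The BMO$^{-1}$ norm is controlled by convolution with a probability measure, so $\|\mathcal A_na_\sigma\|\leq2\sigma$; I would compensate for the factor $2$ by choosing $\sigma_\delta$ smaller. Similarly $\|\mathcal A_nb_\sigma(t)\|_\infty\leq2\vartheta_n*\beta_\sigma(t)$.

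The main obstacle is the envelope. $\mathcal A_nv$ is bounded by $\rho_{\varepsilon_n}*F+\rho_{R_n}*F$, which is not one common $F$. Two fixes are available. The first is to note that Lemma \ref{lem:weights} only uses $F$ through the trace bound \eqref{eq:wl-trace} on $O^c$ and the flatness of $\psi$ on $O$. By translation averaging, $\int_{O^c}(\rho*F)\varphi^2$ can be handled after enlarging $O$ by $\varepsilon_n$, which is harmless. The $\rho_{R_n}$ piece has sup norm at most $CN_FR_n^{-1}$, so it can be moved into $b$ with $L^\infty$ bound $CN_F/R_n\leq C$. The alternative, which I would try first, is to avoid a pointwise envelope altogether and prove \eqref{eq:drift} directly for $u_n$. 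This uses $2\cE^a_{U_n}(\psi\varphi,\varphi)$ as an average of translated forms $2\cE^a_{t-r}(\psi(\cdot-w)\varphi_w,\varphi_w)$. The difficulty is that $\psi$ is then translated. The flat set of $\psi(\cdot-w)$ is $O+w$, and the radii sum in Lemma \ref{lem:flatten} is unchanged, so the same $\delta$-loss holds. However, the endpoint calibration \eqref{eq:wl-cal} then needs the translated weight evaluated at $x,y$. Since $|w|\leq R_n$ may be large, the first fix is more robust. Either way one obtains \eqref{eq:drift} for all $n$ with $c_\delta=K_\delta+2\sup_n\vartheta_n*\beta_{\sigma_\delta}$ in its integrated form, which is bounded in $L^1$ with uniformly small short-time integrals. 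Running the proof of Theorem \ref{thm:upper} then gives \eqref{eq:weighted-opt} with one common integrated remainder.
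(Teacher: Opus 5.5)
Part (i) is correct and is essentially the paper's argument. In (ii), your splitting, the bounds for the $a_\sigma$- and $b_\sigma$-pieces, and moving $\vartheta_n*\rho_{R_n}*v$ into the bounded part with $\|\cdot\|_\infty\leq CN_F/R_n$ also agree with the paper. The gap is the remaining piece $\vartheta_n*\rho_{\varepsilon_n}*v$.

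\emph{Why your fixes fail.} Enlarging $O_{\eta,R}$ by $\varepsilon_n$ is not harmless. The calibration \eqref{eq:wl-cal} rests only on $\sum_j r_j\leq C(N/\eta)^dR$. The Vitali family in Lemma~\ref{lem:flatten} is in general infinite, so an additive enlargement replaces this sum by $\sum_j(5r_j+\varepsilon_n)$, which may diverge. Concretely, $\overline{O_{\eta,R}}$ can contain a whole segment $[x,y]$ while $\sum_j r_j$ stays small. For instance, centre critical balls in the gaps of a fat Cantor set in $[x,y]$ and place annular singularities of $F$ on them, which keeps $F\in L^{d,\infty}$. Then, for every $n$, the $\varepsilon_n$-neighbourhood of $O_{\eta,R}$ is an open set containing $[x,y]$. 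Every Lipschitz weight with vanishing gradient on it satisfies $\psi(y)=\psi(x)$, so the whole increment is lost.

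The translated-weight route has the same defect. Averaging the form against one fixed $\psi$ requires the trace bound for $F$ off $O_{\eta,R}-w$ for all $|w|<\varepsilon_n$, which again means flatness on the enlarged set. In either route the weight would also depend on $n$, so you do not obtain the common envelope asserted in (ii).

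\emph{The missing idea} is maximal-function domination. Since $0\leq\rho\leq\|\rho\|_\infty\mathbf{1}_{B_1(0)}$, you have
\[
|\vartheta_n*\rho_{\varepsilon_n}*v(t,x)|\leq\rho_{\varepsilon_n}*F(x)\leq C\mathcal MF(x)
\]
uniformly in $n$ and $t$. Moreover, $\mathcal M$ is bounded on $L^{d,\infty}$ because $d>1$. Thus $C\mathcal MF$ is a single time-independent envelope with weak-$L^d$ norm at most $CN_F$. Lemma~\ref{lem:weights} applies to it verbatim, with $O_{\eta,R}$ and the weights independent of $n$. If you only need the uniform kernel bound, it would also suffice to apply Lemma~\ref{lem:weights} separately to each $\rho_{\varepsilon_n}*F$, since its constants depend on the envelope only through the weak-$L^d$ norm.

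\emph{A smaller point concerns the remainder.} $\sup_n\vartheta_n*\beta_\sigma$ is a one-sided maximal function of an $L^1$ function and need not be integrable. The proof of Theorem~\ref{thm:upper} uses $c_\delta$ only through $\int_0^t c_\delta$, and \eqref{eq:weighted-opt} is monotone in $e_\delta$. The one-sided support of $\vartheta_n$ gives $\int_0^h\vartheta_n*\beta_\sigma\leq\int_0^h\beta_\sigma$. So, with $R_n\geq1$, a common remainder is
\[
c_\delta=K_\delta+2\beta_{\sigma_\delta/C}+CN_F .
\]
This also accounts for the term $CN_F/R_n$ from the $\rho_{R_n}$-piece you moved into $b$, which your $c_\delta$ omits.
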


\begin{proof}
By \eqref{eq:hbmo}, condition \eqref{eq:sector} holds with $M=CN_U$ and $m=0$. The same argument applies uniformly to the approximations in \eqref{eq:approx}.

We also check preservation of specified $M,m$ under \eqref{eq:bandpass}. Spatial translation preserves all norms in \eqref{eq:sector}. Averaging against a nonnegative unit-mass spatial mollifier preserves $M,m$. Time convolution replaces $m$ by $\vartheta_n*m$. The difference of the two spatial convolutions therefore has parameters $2M$ and $m_n=2\vartheta_n*m$. Normalization by a spatially constant skew matrix does not change the form. Since the time mollifier uses only earlier times and $U=0$ at negative times, this applies on $(0,T)$. Jensen's inequality and the same one-sided support imply
\[
\int_0^h m_n(t)^2\,\d t
\leq4\int_0^h m(t)^2\,\d t,\quad 0<h\leq T.
\]
Thus, both parameters in the lower estimate remain uniformly bounded.

Extend the three summands in \eqref{eq:decomp} by zero to negative times, and put $\beta_\sigma(t):=\|b_\sigma(t)\|_\infty$. Since convolution commutes with divergence, their drifts are
\[
u_n=\vartheta_n*(\rho_{\varepsilon_n}-\rho_{R_n})*u.
\]
Here $\vartheta_n$ acts in time and the other convolutions act in space. Write
\[
v_n:=\vartheta_n*\rho_{\varepsilon_n}*v,\quad a_{\sigma,n}:=\vartheta_n*(\rho_{\varepsilon_n}-\rho_{R_n})*a_\sigma, 
\]
\[
b_{\sigma,n}:=\vartheta_n*(\rho_{\varepsilon_n}-\rho_{R_n})*b_\sigma
-\vartheta_n*\rho_{R_n}*v.
\]
Then $u_n=v_n+a_{\sigma,n}+b_{\sigma,n}$, and all three terms are divergence-free.  Let $\mathcal M F$ be the Hardy--Littlewood maximal function of $F$.  Positivity of the mollifiers implies
\[
|v_n(t,x)|\leq C\mathcal M F(x),\quad
\|\mathcal M F\|_{L^{d,\infty}}\leq C\|F\|_{L^{d,\infty}}.
\]
Convolution is bounded on ${\rm BMO}^{-1}$, so we may fix one constant $C\geq1$, large enough for the preceding maximal-function estimates and the following convolution bounds, such that
\[
\esssup_{0<t<T}\|a_{\sigma,n}(t)\|_{{\rm BMO}^{-1}}
\leq C\sigma.
\]
Finally, by the weak H\"older inequality, we have $\|\rho_{R_n}*v(t)\|_\infty\leq CR_n^{-1}\|F\|_{L^{d,\infty}}$, and hence
\[
\|b_{\sigma,n}(t)\|_\infty
\leq2(\vartheta_n*\beta_\sigma)(t)
+\frac{C\|F\|_{L^{d,\infty}}}{R_n}.
\]
The time convolution uses only earlier times.  Therefore, for $0<h<T$,
\[
\int_0^h\|b_{\sigma,n}(t)\|_\infty\,\d t
\leq2\int_0^h\beta_\sigma(t)\,\d t
+\frac{Ch\|F\|_{L^{d,\infty}}}{R_n}.
\]
Use the decomposition at parameter $\sigma_\delta/C$. Applying Lemma \ref{lem:weights} to the fixed envelope $C\mathcal M F$, we obtain weights independent of $n$ and remainders $K_\delta+\|b_{\sigma_\delta/C,n}(t)\|_\infty$. Choose $R_n\geq1$. Their integrals over $(0,h)$ are bounded by those of
\[
c_\delta(t):=K_\delta+2\beta_{\sigma_\delta/C}(t)
               +C\|F\|_{L^{d,\infty}}.
\]
From the weighted proof of Theorem \ref{thm:upper}, we therefore obtain \eqref{eq:weighted-opt} for every $n$ with this common $c_\delta$. In particular, no spatial convolution error needs to be added. The fixed constant $C$ depends only on $d$ and the chosen mollifiers, and is independent of $n,\sigma,\delta,h$.
\end{proof}

\end{appendix}

\section*{\bf Acknowledgments}
The authors would like to thank Zhen-Qing Chen, Xin Chen, Hirofumi Osada, and Zhongmin Qian for helpful discussions.

The research of Guohuan Zhao is supported by the National Key Research and Development Program of China (No. 2024YFA1013503) and the National Natural Science Foundation of China (Grant No. 12271352).

\medskip

\paragraph*{\bf Use of AI}

The author initially used \emph{ChatGPT-5.6} to find the example discussed in Theorem \ref{thm:path} for dimensions $d\geq 3$ and it provided an elegant proof based on the stationarity of the Ornstein-Uhlenbeck process; however, that argument could not be extended to the two-dimensional case. The author then generalized this result to all dimensions $d\geq 2$ and constructed an alternative argument to establish the singularity of the path law.

\small
\bibliographystyle{alphaabbrv}
\bibliography{mybib.bib}

\end{document}